\newtheorem{theorem}{Theorem}[section]
\newtheorem{definition}{Definition}[section]
\newtheorem{lemma}[theorem]{Lemma}
\newtheorem{example}[theorem]{Example}
\newtheorem{proposition}[theorem]{Proposition}
\newtheorem{fact}[theorem]{Observation}
\newtheorem{corollary}[theorem]{Corollary}
\newcommand{\AX}[1]{\begingroup\bfseries\upshape #1\endgroup}
\newcommand{\overlaps}{\between}
\begin{document}
\title{Transit Functions and Pyramid-Like Binary Clustering Systems}

\author[1]{Manoj Changat}
\author[1]{Ameera Vaheeda Shanavas}
\author[2]{Peter F.\ Stadler}

\affil[1]{Department of Futures Studies, University of Kerala,
  Trivandrum, IN 695 581, India\\
  mchangat@gmail.com, ameerasv@gmail.com}

\affil[2]{Bioinformatics Group, Department of Computer Science \&
  Interdisciplinary Center for Bioinformatics, Universit{\"a}t Leipzig,
  H{\"a}rtelstra{\ss}e 16-18, D-04107 Leipzig, Germany; German Centre for
  Integrative Biodiversity Research (iDiv) Halle-Jena-Leipzig, Competence
  Center for Scalable Data Services and Solutions Dresden-Leipzig, Leipzig
  Research Center for Civilization Diseases, and Centre for Biotechnology
  and Biomedicine at Leipzig University at Universit{\"a}t Leipzig\\
  Max Planck Institute for Mathematics in the Sciences, Inselstra{\ss}e 22,
  D-04103 Leipzig, Germany \\
  Institute for Theoretical Chemistry, University of Vienna,
  W{\"a}hringerstrasse 17, A-1090 Wien, Austria\\
  Facultad de Ciencias, Universidad National de Colombia
  Bogot{\'a}, Colombia\\
  Santa Fe Insitute, 1399 Hyde Park Rd., Santa Fe NM 87501, USA\\
  studla@bioinf.uni-leipzig.de}
\date{\ }

\setcounter{Maxaffil}{0}
\renewcommand\Affilfont{\scriptsize}

\maketitle

\begin{abstract}
  Binary clustering systems are closely related to monotone transit
  functions. An interesting class are pyramidal transit functions defined
  by the fact that their transit sets form an interval hypergraph. We
  investigate here properties of transit function $R$, such as
  union-closure, that are sufficient to ensure that $R$ is at least weakly
  pyramidal. Necessary conditions for pyramidal transit functions are
  derived from the five forbidden configurations in Tucker's
  characterization of interval hypergraphs. The first corresponds to
  $\beta$-acyclicity, also known as total balancedness, for which we obtain
  three alternative characterizations. For monotonous transit functions,
  the last forbidden configuration becomes redundant, leaving us with
  characterization of pyramidal transit functions in terms of four
  additional conditions.
\end{abstract}
  
\textbf{Keywords:} 
  interval hypergraph;
  union closure;
  totally balanced hypergraph;
  weak hierarchy;
  monotone transit functions.

\bigskip\bigskip

\section{Introduction} 

Transit functions have been introduced as a unifying approach for results
and ideas on intervals, convexities, and betweenness in graphs and posets
\cite{Mulder:08}. Initially, they were introduced to capture an abstract
notion of ``betweenness'', i.e., an element $x$ is considered to be
``between'' $u$ and $v$, if $x\in R(u,v)$. Monotone transit functions,
satisfying $R(x,y)\subseteq R(u,v)$ for all $x,y\in R(u,v)$, play an
important role in the theory of convexities on graphs and other set systems
\cite{vandeVel:93}. They also have an alternative interpretation as
\emph{binary clustering systems} \cite{Barthelemy:08}. Here every cluster
is ``spanned'' by a pair of points in the following sense: if $C$ is a
cluster, then there are two points $x$ and $y$, such that $C$ is the unique
inclusion-minimal cluster containing $x$ and $y$. Binary clustering systems
include not only hierarchies but also important clustering models with
overlaps, such as paired hierarchies \cite{Bertrand:08}, pyramids
\cite{Diday:86,Bertrand:13}, and weak hierarchies \cite{Bandelt:89}.

This connection between transit functions and clustering systems suggests
investigating ``natural'' properties of transit function as motivation for
properties of clustering systems. Hierarchies and paired hierarchies are
frequently too restrictive. On the other hand, weak hierarchies may already
be too general e.g.\ when considering clustering systems of phylogenetic
networks \cite{Hellmuth:22q}. Pyramids, i.e., clustering systems whose
elements can be seen as intervals, form a possible middle ground. These set
systems are equivalent to interval hypergraphs
\cite{Tucker:72,Trotter:76,Nebesky:83,Duchet:84}. A potential shortcoming
of pyramids is that the associated total order of the points may not always
have a simple interpretation and that these set systems do not have a
compact characterization without (implicit) reference to an ordering of the
points.  It is of interest, therefore, to consider binary clustering
systems that are either mild generalizations or restrictions of
pyramids. Naturally, the existence of total order in an interval hypergraph
closely liked to notions of acyclicity in hypergraphs, which were
originally studied in the context of database schemes
\cite{Fagin:83,Ausiello:85}. Here, we connect these concepts with
corresponding properties of transit functions and binary clustering
systems.

This contribution is organized as follows. In Section~\ref{sect:notation},
we provide the technical background and connect properties of transit
functions with the literature on cycle types and acyclicity in hypergraphs.
It is proved in \cite{Changat:21w} that the union closed binary clustering
systems are pyramidal. Section~\ref{sect:uc} considers union-closed set
systems and derives an alternative characterization of their canonical
transit functions, thereby solving an open problem of \cite{Changat:21w}.
In section~\ref{sect:wp}, we consider the weak pyramidality property and a
relaxed variant termed \AX{(i)} that is closely related to axioms studied
in earlier work \cite{Changat:21w}. We prove that weakly pyramidal set
systems and weak hierarchies satisfying \AX{(I)} are equivalent and that
the corresponding axiom \AX{(i)} for transit functions is in between
\AX{(wp)} and \AX{(o')}.  In section~\ref{sect:l1l2}, we consider
sufficient conditions for weakly pyramidal transit functions. Here we
consider two clustering systems between paired hierarchies and weakly
pyramidal set systems that are described by the axioms \AX{(L1)} and
\AX{(N3O)}, and study their identifying transit functions. We introduce
axiom \AX{(l2)} and prove that a monotone transit function satisfying both
\AX{(l1)} and \AX{(l2)} is pyramidal.

Then we turn to the necessary conditions for pyramidal transit functions:
Section~\ref{sect:tb} is concerned with more general, so-called totally
balanced transit functions, whose set systems are $\beta$-acyclic
hypergraphs \cite{Anstee:83,Lehel:85} and thus do not contain the first
forbidden configuration in Tucker's \cite{Tucker:72} characterization of
interval hypergraphs. We derive three alternative characterizations for
this property for totally balanced monotone transit functions. We then
proceed in Sections~\ref{sect:23} and \ref{sect:45} to analyze the
forbidden configurations in Tucker's \cite{Tucker:72} characterization and
finally arrive at a characterization of pyramidal transit functions. We
close this contribution in section \ref{discsn} with a summary of the
relationships among the properties considered here and some open questions.

\section{Notation and Preliminaries} 
\label{sect:notation}
Throughout, $V$ is a non-empty finite set and $\mathscr{C}\subseteq 2^V$ is
a set of subsets of $V$ that does not contain the empty set.  Two sets
$A,B\subseteq V$ overlap, in symbols $A\overlaps B$, if $A\cap B$,
$A\setminus B$ and $B\setminus A$ are non-empty.
  
\paragraph{Transit Functions and $\mathscr{T}$-Systems} 
Formally, a \emph{transit function} \cite{Mulder:08} on a non-empty set $V$
is a function $R: V\times V\to 2^V$ satisfying the three axioms
\begin{description}\setlength{\itemsep}{0pt}\setlength{\parskip}{0pt}%
\item[\AX{(t1)}] $u\in R(u,v)$ for all $u,v\in V$.
\item[\AX{(t2)}] $R(u,v)=R(v,u)$ for all $u,v\in V$.
\item[\AX{(t3)}] $R(u,u)=\{u\}$ for all $u\in V$.
\end{description}
The transit functions appearing in the context of clustering systems
\cite{Barthelemy:08} in addition satisfy the \emph{monotonicity axiom}
\begin{description}\setlength{\itemsep}{0pt}\setlength{\parskip}{0pt}%
\item[\AX{(m)}] $p,q\in R(u,v)$ implies $R(p,q)\subseteq R(u,v)$. 
\end{description}
Transit functions are sometimes called ``Boolean dissimilarities''. The set
systems corresponding to monotone transit functions are slightly more
general than binary clustering systems. A characterization was given in
\cite{Changat:19a}:
\begin{definition}
  A \emph{$\mathscr{T}$-system} is a system of non-empty sets
  $\mathscr{C}\subset 2^V$ satisfying the three axioms
  \begin{description}\setlength{\itemsep}{0pt}\setlength{\parskip}{0pt}%
  \item[\AX{(KS)}] $\{x\}\in\mathscr{C}$ for all $x\in V$.
  \item[\AX{(KR)}] For every $C\in\mathscr{C}$ there are points $p,q\in C$
    such that $p,q\in C'$ implies $C\subseteq C'$ for all
    $C'\in \mathscr{C}$.
  \item[\AX{(KC)}] For any two $p,q\in V$ holds
    $\displaystyle \bigcap\{C\in\mathscr{C}|p,q\in C\} \in \mathscr{C}$.
  \end{description}
\end{definition}
We say that a set system $\mathscr{C}$ \emph{is identified} by the transit
function $R$ if $\mathscr{C}=\{ R(x,y)| x,y\in V\}$.
\begin{proposition} \cite{Changat:19a}
  There is a bijection between monotone transit functions $R:V\times V\to
  2^V$ and $\mathscr{T}$-systems $\mathscr{C}\subseteq 2^V$ mediate by  
  \begin{equation}
    \begin{split}
      R_{\mathscr{C}}(x,y) &\coloneqq \bigcap\{ C\in\mathscr{C}| x,y\in C\}\\
      \mathscr{C}_R        &\coloneqq \{ R(x,y)| x,y\in V\}
    \end{split}
  \end{equation}
\end{proposition}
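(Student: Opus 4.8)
The plan is to verify that the two prescriptions are well-defined, land in the correct target classes, and are mutually inverse; all of this is an exercise in unwinding the definitions, with the monotonicity axiom \AX{(m)} doing the essential work in both directions.

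First I would check that $\mathscr{C}\mapsto R_{\mathscr{C}}$ produces a monotone transit function. One preliminary point needs care: the family $\{C\in\mathscr{C}\mid x,y\in C\}$ over which the intersection is taken must be non-empty. This holds because if some pair $x,y$ lay in no member of $\mathscr{C}$, then \AX{(KC)} applied to $x,y$ would assign membership in $\mathscr{C}$ to the empty intersection, i.e.\ (under the usual convention) to $V$ itself, which then covers $x,y$ — a contradiction. Moreover, \AX{(KC)} applied to $x,y$ says precisely that $R_{\mathscr{C}}(x,y)=\bigcap\{C\in\mathscr{C}\mid x,y\in C\}\in\mathscr{C}$, so the map has the right codomain. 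Now \AX{(t1)} and \AX{(t2)} are immediate from the definition; \AX{(t3)} uses \AX{(KS)}, since $\{u\}$ lies in the family $\{C\mid u\in C\}$ and therefore forces $R_{\mathscr{C}}(u,u)=\{u\}$; and \AX{(m)} follows because if $p,q\in R_{\mathscr{C}}(u,v)$ then every $C\in\mathscr{C}$ with $u,v\in C$ also contains $p,q$, hence $R_{\mathscr{C}}(p,q)\subseteq C$ for all such $C$, so $R_{\mathscr{C}}(p,q)\subseteq R_{\mathscr{C}}(u,v)$.

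Next I would check that $R\mapsto\mathscr{C}_R$ produces a $\mathscr{T}$-system: non-emptiness of the members and \AX{(KS)} come from \AX{(t1)} and \AX{(t3)}; for \AX{(KR)}, given $C=R(p,q)$ we have $p,q\in C$ by \AX{(t1)}--\AX{(t2)}, and whenever $p,q\in C'=R(u,v)$ the axiom \AX{(m)} gives $C=R(p,q)\subseteq R(u,v)=C'$; and for \AX{(KC)} I would show $\bigcap\{C\in\mathscr{C}_R\mid p,q\in C\}=R(p,q)\in\mathscr{C}_R$, since $R(p,q)$ is one of the sets intersected while every $R(u,v)$ containing $p,q$ satisfies $R(p,q)\subseteq R(u,v)$ by \AX{(m)}. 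This last computation simultaneously establishes $R_{\mathscr{C}_R}=R$, one of the two inverse identities. For the other identity $\mathscr{C}_{R_{\mathscr{C}}}=\mathscr{C}$, the inclusion $\subseteq$ is the observation above that $R_{\mathscr{C}}(x,y)\in\mathscr{C}$, and $\supseteq$ follows by taking, for a given $C\in\mathscr{C}$, the pair $p,q$ from \AX{(KR)}: then $C$ lies in the family defining $R_{\mathscr{C}}(p,q)$ while being contained in every member of it, so $R_{\mathscr{C}}(p,q)=C$.

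I do not expect a genuine obstacle here: the only subtle points are the well-definedness of $R_{\mathscr{C}}$ (the empty-family issue above) and the recognition that \AX{(KC)} is exactly what keeps $R_{\mathscr{C}}$ valued in $\mathscr{C}$, while \AX{(KR)} is exactly what makes $R\mapsto\mathscr{C}_R$ hit every $\mathscr{T}$-system; once these roles are identified, the remainder is bookkeeping with intersections.
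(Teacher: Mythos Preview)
Your argument is correct and complete. Note, however, that the paper itself does not prove this proposition: it is quoted from \cite{Changat:19a} and stated without proof here, so there is no in-paper argument to compare against. Your verification is the standard one and matches what one finds in the cited source --- checking that each of \AX{(t1)}--\AX{(t3)}, \AX{(m)} corresponds to one of \AX{(KS)}, \AX{(KR)}, \AX{(KC)} under the two maps, and then confirming the inverse identities via the minimality interpretation of $R_{\mathscr{C}}(p,q)$.
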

That is, a set system $\mathscr{C}$ is identified by a transit function $R$
if and only if $\mathscr{C}$ is a $\mathscr{T}$-system. In this case,
$R=R_{\mathscr{C}}$ and we call $R_{\mathscr{C}}$ the \emph{canonical
transit function} of the set system $\mathscr{C}$, and $\mathscr{C}_R$ the
collection of \emph{transit sets} of $R$. A set system is binary in the
sense of \cite{Barthelemy:08} if and only if it satisfies \AX{(KC)} and
\AX{(KR)}.  Axiom \AX{(KS)} corresponds to \AX{(t3)}, i.e., the fact that
all singletons are transit sets.  Moreover, a $\mathscr{T}$-system is a
\emph{binary clustering system} if it satisfies
\begin{description}\setlength{\itemsep}{0pt}\setlength{\parskip}{0pt}%
  \item[\AX{(K1)}] $V \in \mathscr{C}$
\end{description} 
As shown in \cite{Barthelemy:08,Changat:19a}, binary clustering systems are
identified by monotone transit function satisfying the additional condition
\begin{description} 
\item[\AX{(a')}] There exist $u,v\in V$ such that $R(u,v)=V$.
\end{description}

\paragraph{Conformal Hypergraphs} The system of transit sets
$\mathscr{C}_R$ has a natural interpretation as the edge set of the
hypergraph $(V,\mathscr{C}_R)$. The \emph{primal graph} or
\emph{two-section} $H_{[2]}$ of $(V,\mathscr{C}_R)$ is the graph with
vertex set $V$ and an edge $\{x,y\}\in E(H_{[2]})$ whenever there is
$C\in\mathscr{C}_R$ with $\{x,y\}\subseteq C$. Axiom \AX{(t1)} and the fact
that $R$ is defined on $V\times V$ implies that the primal graph $H_{[2]}$
of $(V,\mathscr{C}_R)$ is the complete graph on $V$.  A hypergraph is said
to be \emph{conformal} \cite{Tarjan:84} if every clique of its two-section
is a hyperedge or, equivalently, covered by a hyperedge. Therefore we have
\begin{fact}
  Let $R$ be a monotone transit function. Then $(V,\mathscr{C}_R)$ is a
  conformal hypergraph if and only if $R$ satisfies \AX{(a')}.
\end{fact}
Since complete graphs are chordal, and a hypergraph is $\alpha$-acylic if
and only if it is conformal and its two-section is chordal \cite{Beeri:83},
we obtain immediately:
\begin{corollary}
  Let $R$ be a monotone transit function. Then $(V,\mathscr{C}_R)$ is an
  $\alpha$-acyclic hypergraph if and only if $R$ satisfies \AX{(a')}.
\end{corollary}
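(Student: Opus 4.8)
The plan is to obtain the statement as a direct corollary of the Observation preceding it together with the cited characterization of $\alpha$-acyclicity, so essentially no new work is required.

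First I would recall that, as already noted when establishing the Observation, axiom \AX{(t1)} together with the fact that $R$ is defined on all of $V\times V$ forces the two-section $H_{[2]}$ of $(V,\mathscr{C}_R)$ to be the complete graph on $V$. Since a complete graph contains no induced cycle of length four or more, it is trivially chordal. Hence the ``chordal two-section'' half of the criterion of Beeri \emph{et al.}\ \cite{Beeri:83} is automatically satisfied by every hypergraph of the form $(V,\mathscr{C}_R)$ arising from a transit function $R$.

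Next I would invoke the theorem of \cite{Beeri:83} in the form quoted above: a hypergraph is $\alpha$-acyclic if and only if it is conformal \emph{and} its two-section is chordal. By the previous paragraph the chordality condition is vacuous in our setting, so $(V,\mathscr{C}_R)$ is $\alpha$-acyclic precisely when it is conformal. Applying the Observation, which says that $(V,\mathscr{C}_R)$ is conformal if and only if $R$ satisfies \AX{(a')}, and chaining the two equivalences gives the desired statement.

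There is no genuine obstacle here; the result is a formal consequence of what has already been proved. The only point meriting a moment's care is confirming that the two-section is indeed complete, but this is exactly what \AX{(t1)} and the totality of $R$ supply, and it was already used in deriving the Observation.
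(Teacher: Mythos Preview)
Your proposal is correct and matches the paper's argument exactly: the paper simply notes that complete graphs are chordal, invokes the Beeri et al.\ characterization of $\alpha$-acyclicity as conformality plus chordal two-section, and then applies the preceding Observation. There is nothing to add.
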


\paragraph{Convexities}
A set system $(V,\mathscr{C})$ is \emph{closed (under non-empty
intersection)} if it satisfies
\begin{description}\setlength{\itemsep}{0pt}\setlength{\parskip}{0pt}%
\item[\AX{(K2)}] If $A,B\in\mathscr{C}$ and $A\cap B\ne\emptyset$ then
  $A\cap B\in\mathscr{C}$. 
\end{description}
$(V,\mathscr{C})$ is a \emph{convexity} if it satisfies \AX{(K1)} and
\AX{(K2)}. We remark that convexities are usually defined to include the
empty set and thus without the restriction of \AX{(K2)} to $A\cap
B\ne\emptyset$. Here, we insist on $\emptyset\notin\mathscr{C}$ in order
for \emph{grounded convexities}, i.e., those satisfying \AX{(KS)}, to be
the same as closed clustering systems. Note that \AX{(K2)} implies
\AX{(KC)}, while the converse is not true. In the language of monotone
transit functions, \AX{(K2)} is equivalent to
\begin{description}\setlength{\itemsep}{0pt}\setlength{\parskip}{0pt}%
\item[\AX{(k)}] For all $u,v,x,y\in V$ with $R(u,v)\cap
  R(x,y)\ne\emptyset$, there exist $p,q\in V$ such that $R(u,v)\cap R(x,y) =
  R(p,q)$.
\end{description}
Axiom \AX{(k)} was introduced as \AX{(m')} in
\cite{Changat:18a,Changat:19a} and renamed \AX{(k)} in later work to
emphasize that it is unrelated to the monotonicity axiom \AX{(m)}.

\paragraph{Weak Hierarchies}
A clustering system $\mathscr{C}$ is a \emph{weak hierarchy}
\cite{Bandelt:89} if for any three sets $A,B,C\in\mathscr{C}$ holds
\begin{equation}
  A \cap B \cap C \in \{ A\cap B, A\cap C, B\cap C\}
\end{equation}
The family of transit sets of the canonical transit function of a weak
hierarchy is a convexity, and hence closed \cite{Changat:19a}.
The corresponding canonical transit function satisfies
\begin{description}\setlength{\itemsep}{0pt}\setlength{\parskip}{0pt}%
\item[\AX(w)] For all $x,y,z\in V$ holds $z\in R(x,y)$ or $y\in R(x,z)$ or
  $x\in R(y,z)$.
\end{description}
In \cite{Changat:21w}, many properties equivalent to \AX{(w)} are
listed. Lemma~1 of \cite{Bandelt:89} implies that weak hierarchies always
satisfy \AX{(KR)}. Moreover, for monotone transit functions, \AX{(w)}
implies both \AX{(k)} \cite[Lemma 4.2]{Changat:19a} and \AX{(a')}. The
corresponding clustering system thus satisfies \AX{(K2)}.

\paragraph{Pyramids}
Clustering structures with an additional, linear ordering are known as
pyramids \cite{Diday:86,Bertrand:13} or interval hypergraphs
\cite{Duchet:78}.
\begin{definition} 
  A clustering system $(V,\mathscr{C})$ is \emph{pre-pyramidal} if there
  exists a total order $<$ on $V$ such that for every $C\in\mathscr{C}$ and
  all $x,y\in C$ we have $x<u<y$ implies $u\in C$. That is, all clusters
  $C\in\mathscr{C}$ are intervals w.r.t.\ $<$.
\end{definition}
As shown in \cite{Tucker:72,Trotter:76,Duchet:84}, pre-pyramidal set
systems are characterized by the following (infinite) series of forbidden
induced sub-hypergraphs:
\begin{equation}
  \label{eq:inthypg}
\includegraphics[width=0.9\textwidth]{./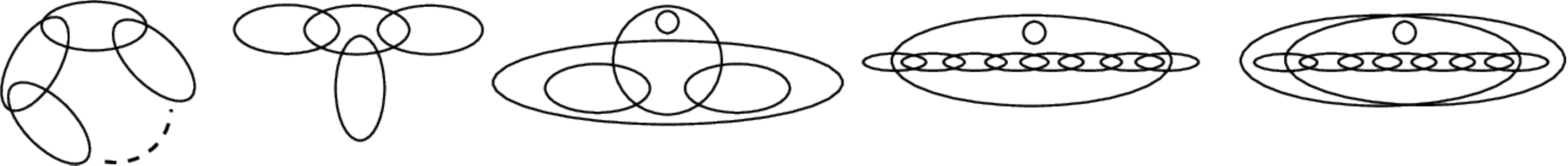}
\end{equation}
Note that the forbidden configurations are to be interpreted as restrictions
of a hypergraph to a subset of its vertices. Sets shown as disjoint in the
diagrams therefore may intersect in additional vertices.  Furthermore, the
number of ``small'' overlapping sets located completely inside the large
ellipse in the fourth case and those located completely inside the
intersection of the large ellipses in the fifth case may be any $k\ge 0$.
It is well known that pre-pyramids are a proper subclass of weak
hierarchies, see e.g.\ \cite{Bertrand:17}.\\

Pre-pyramidal set systems are also known as \emph{interval hypergraphs}
\cite{Duchet:78}.  For any three distinct points $x,y,z\in V$, we say that
$y$ lies between $x$ and $z$ if every hyperpath connecting $x$ and $z$ has
an edge containing $y$. A hypergraph $(V,\mathscr{C})$ is an interval
hypergraph if and only if every set $\{x,y,z\}\subseteq V$ of three
distinct vertices contains a vertex that lies between the other two
\cite{Duchet:78}. A clustering system $(V,\mathscr{C})$ is \emph{pyramidal}
if it is pre-pyramidal and closed. Since pre-pyramidal clustering systems
are a special case of weak hierarchies \cite{Bertrand:13}, we observe that
for every monotone transit function with a pre-pyramidal system of transit
sets, $\mathscr{C}_R$ is closed and thus pyramidal.
\begin{definition}
  A monotone transit function $R$ is pyramidal if its set of transit sets
  $\mathscr{C}_R$ form a pyramidal clustering system.
\end{definition}

In other words, $R$ is pyradmidal if there is a total order $<$ on $V$
  such that $R(u,v)$ is an interval for all $u,v\in V$. It is worth noting
  that, despite Duchet's betweenness-based characterization of interval
  hypergraphs \cite{Duchet:78}, neither of the two classical betweenness
  axiom \cite{Mulder:08}
  \begin{description}
  \item[\AX{(b3)}] If $x\in R(u,v)$ and $y\in R(u,x)$, then $x\in R(y,v)$
  \item[\AX{(b4)}] If $x\in R(u,v)$, then $R(u,x)\cap  R(x,v) = \{x\}$
  \end{description}
  need to be satisfied.  The transit function $R$ on $V=\{u,v,x,y\}$
  comprising the singletons, $R(v,y)=\{v,y\}$ and $R(p,q)=V$ otherwise, is
  monotone and violates \AX{(b3)}. However, it is pyramidal with order
  $x<u<y<v$. The ``indiscrete transit function'', whose transit sets are
  only the singletons and $V$, is monotone, pyramidal, and violates
  \AX{(b4)}. In fact, the pyramidal transit function satisfying \AX{(b4)}
  is uniquely defined (up to isomorphism).
  
  Given a total order $<$ on $V$, we write $[u,v]\coloneqq\{w\in V| u\le
  w\le v\}$ for the intervals w.r.t.\ the order $<$. A monotone pyramidal
  transit function $R$ by construction satisfies $[u,v]\subseteq R(u,v)$
  for all $u,v\in V$.
  \begin{proposition}
    Let $R$ be a monotone, pyramidal transit function satisfying \AX{(b3)}
    or \AX{(b4)}.  Then $R(u,v)=[u,v]$ for all $u,v\in V$.
  \end{proposition}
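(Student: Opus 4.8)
Since $[u,v]\subseteq R(u,v)$ holds for every monotone pyramidal transit function (as observed just above), the plan is to prove the reverse inclusion $R(u,v)\subseteq[u,v]$, which then gives equality. First I would dispose of the trivial cases: $R(u,u)=\{u\}=[u,u]$ by \AX{(t3)}, and by \AX{(t2)} one may assume $u<v$. Then I would argue by contradiction: suppose some $w\in R(u,v)$ satisfies $w\notin[u,v]$, so $w<u$ or $w>v$, and in particular $w\neq u$ and $w\neq v$. In either case exactly one of $u,v$ ---call it $z$, the other being $z'$--- lies strictly between $w$ and $z'$ in the order, and $\{z,z'\}=\{u,v\}$, $w\neq z$. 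The one structural input I would extract from pyramidality is this: $R(z',w)$ is an interval containing the endpoints $z'$ and $w$, hence it also contains the point $z$ lying between them. I would also record that $w\in R(z',z)=R(u,v)$ by \AX{(t2)}.

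For the \AX{(b3)} case I would feed these two memberships into \AX{(b3)} with the substitution $(u,v,x,y)\mapsto(z',z,w,z)$: the hypotheses ``$w\in R(z',z)$'' and ``$z\in R(z',w)$'' both hold, so \AX{(b3)} forces $w\in R(z,z)$, and \AX{(t3)} collapses this to $w=z$, contradicting $w\neq z$. For the \AX{(b4)} case I would instead apply \AX{(b4)} to the membership $w\in R(z',z)$, obtaining $R(z',w)\cap R(w,z)=\{w\}$; but $z\in R(z',w)$ by pyramidality and $z\in R(w,z)$ by \AX{(t1)} together with \AX{(t2)}, so $z$ is a second element of that intersection, again forcing $z=w$. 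In either case the contradiction establishes $R(u,v)\subseteq[u,v]$.

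I do not expect a serious obstacle; the proof is short. The only point needing care is the variable bookkeeping when invoking \AX{(b3)}: the substitution must be chosen so that the conclusion lands on a transit set of the form $R(z,z)$, which is where \AX{(t3)} delivers the contradiction. The conceptual heart ---and the reason both classical betweenness axioms suffice even though, per the two examples given before the proposition, neither one follows from pyramidality--- is simply that pyramidality places the ``middle'' endpoint $z$ inside $R(z',w)$, which is exactly the ingredient each axiom needs to bite.
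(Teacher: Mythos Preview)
Your argument is correct, and it is considerably more direct than the paper's proof. The paper proceeds inductively: it first uses \AX{(a')} to locate $p<q$ with $R(p,q)=[p,q]=V$, then peels off endpoints one at a time---showing, for instance under \AX{(b3)}, that $q\notin R(p,x)$ for the predecessor $x$ of $q$, whence $R(p,x)=[p,x]$, and repeating until all transit sets with endpoint $p$ are handled, then stripping from the left analogously; a parallel two-stage argument is given for \AX{(b4)}. Your approach bypasses this induction entirely by fixing an arbitrary pair $u<v$ and deriving a contradiction from a hypothetical $w\in R(u,v)\setminus[u,v]$: the single observation that the ``inner'' endpoint $z\in\{u,v\}$ lies in the interval $R(z',w)$ is enough to trigger either betweenness axiom and force $w=z$. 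This buys brevity and uniformity (the two cases share the same skeleton), and it avoids any appeal to \AX{(a')}. The paper's version, by contrast, makes the role of the linear order more visibly constructive but is longer and treats \AX{(b3)} and \AX{(b4)} by separate ad hoc iterations.
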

  \begin{proof}
    Let $R$ be monotone and pyramidal. In particular, $R$ satisfies
    \AX{(a')}, i.e., there is $p<q$ such that $R(p,q)=[p,q]=V$.  First
    suppose $R$ satisfies \AX{(b3)}. If $x\ne q$, then $q\notin R(p,x)$
    since otherwise \AX{(b3)} implies $x\in R(q,q)=\{q\}$, contradicting
    \AX{(t3)}. If $x$ is the predecessor of $q$ w.r.t. $<$, then
    $R(p,x)=[p,x]$. Repeating the argument for the predecessor $x'\in
    R(p,x)=[p,x]$ of $x$ yields $R(p,x')=[p,x']$, and eventually
    $R(p,u)=[p,u]$ for all $u\in V$. We argue analogously that for
    $p'\in[p,x]$ with $p'\ne p$ we have $p\notin
    R(p',x)\subseteq[p,x]$. For the successor $w$ of $p$, therefore, we we
    obtain $R(w,x)=[w,x]$. Again, repeating the argument for $w'\in[w,x]$
    we eventually obtain $R(u,x)=[u,x]$ for all $u\le x$.

    Now suppose $R$ satisfies \AX{(b4)}. Assume that there is $x\in V$ such
    that $w\in R(p,x)\setminus[p,x]$ and thus $w\in R(p,x)\cap [x,q]
    \subseteq R(p,x)\cap R(x,q)\ne \{x\}$, contradicting
    \AX{(b4)}. Thus $R(p,x)=[p,x]$ for all $x\in V$. Similarly, we obtain
    $R(x,q)=[x,q]$ for all $x\in V$. Now consider $x'\in R(p,x)$ and
    suppose $R(x',x)\ne[x',x]$, i.e., there is $w\in
    R(x',x)\setminus[x',x]$. By monotonicity, $w\in R(p,x)=[p,x]$ and thus
    $w\in R(p,x')\cap R(x',x)=[p,x']\cap R(x',x)\ne\{x'\}$, again
    contradicting \AX{(b4)}.  Thus $R(x',x)=[x',x]$. An anlogous argument
    shows $R(x,x')=[x,x']$ for all $x'\in R(x,q)$.
  \end{proof}

\paragraph{Totally balanced clustering systems}  
The first forbidden configuration in Eq.(\ref{eq:inthypg}) amounts to the
absence of a so-called weak $\beta$-cycle \cite{Fagin:83,Cheung:91}, which
is defined as a sequence of $n\ge 3$ sets $C_1,\dots,C_n$ and vertices
$x_1,\dots , x_n$ such that, for all $i$, $x_i\in C_{i}\cap C_{i+1}$ and
$x_i\notin C_k$ for any $k\notin\{i,i+1\}$ (where indices are taken modulo
$n$). Hypergraphs without a weak $\beta$-cycle are known as
\emph{$\beta$-acyclic} or \emph{totally balanced}, see
e.g.\ \cite{Anstee:83,Lehel:85}.  Since a $\beta$-cycle of length $3$
amounts to $C_1,C_2,C_3\in\mathscr{C}$ and points $x_1,x_2,x_3$ such that
$x_i\in C_i\cap C_{i+1}$ but $x_i\notin C_{i+2}$, we note that
$\mathscr{C}$ is a weak hierarchy if and only if it does not contain a
$\beta$-cycle of length $3$ \cite{Bandelt:89}. From the point of view of
clustering systems, totally balanced ($\beta$-acyclic) hypergraphs have
been studied in \cite{Brucker:20}.
  
\paragraph{Paired Hierarchies} A set system $\mathscr{C}$ is a paired
hierarchy if every cluster $C\in\mathscr{C}$ overlaps at most one other
cluster $C'\in\mathscr{C}$ \cite{Bertrand:08}. A characterization of paired
hierarchies in terms of their transit functions can be found in
\cite{Bertrand:17,Changat:19a}.

\paragraph{Hierarchies} A set system $\mathscr{C}$ on $V$ is a hierarchy
if $V\in\mathscr{C}$, all singletons belong to $\mathscr{C}$ and $A\cap
B\in\{A,B,\emptyset\}$ for all $A,B\in\mathscr{C}$. Several alternative
characterizations of monotone transit functions whose transit sets form a
hierarchy are discussed in \cite{Changat:18a}, see also \cite{Bertrand:17}.

\paragraph{Acyclicity in Hypergraphs}
There is extensive literature concerned with notions of acyclicity in
hypergraphs \cite{Fagin:83,Ausiello:85,Duris:12,Brault:17}. A stronger
condition than $\beta$-acyclicy is $\gamma$-acyclicity, which can be
phrased as follows: A hypergraph is $\gamma$-acyclic if it contains neither
a \emph{pure cycle} nor a so-called $\gamma$-triangle. A cycle is pure if
it is a $\beta$-cycle such that $C_i\cap C_j=\emptyset$ for $j\ne
i,i-1,i+1$. A $\gamma$-triangle consists of three pairwisely intersecting
sets $C_1,C_2,C_3\in\mathscr{C}$ such that there exist $u,v\in C_3$ with $u\in
C_1\setminus C_2$ and $v\in C_2\setminus C_1$ \cite{Fagin:83,Ausiello:85}.

If $\mathscr{C}$ is not a weak hierarchy, then there are three pairwisely
overlapping sets $C_1,C_2,C_3\in\mathscr{C}$ such that $C_1\cap C_2\cap
C_3\notin\{C_1\cap C_2,C_1\cap C_3,C_2\cap C_3\}$.  From $C_1\cap C_2\cap
C_3\subsetneq C_1\cap C_3$, we note that there is $u\in C_1\cap C_3$ with
$u\notin C_2$, and $C_1\cap C_2\cap C_3\subsetneq C_2\cap C_3$ implies
$v\in C_2\cap C_3$ with $v\notin C_1$, i.e., $\{C_1,C_2,C_3\}$ forms a
$\gamma$-triangle. Thus $\gamma$-acyclicity implies that $\mathscr{C}$ is a
weak hierarchy.  Conversely, if $\mathscr{C}$ satisfies \AX{(W)}, then
every $\gamma$-triangle is of the form $C_1\overlaps C_2$ and $\emptyset\ne
C_1\cap C_2\subseteq C_3$. Furthermore, if $R$ is a transit function
satisfying \AX{(w)} then we have $V\in\mathscr{C}_R$ \cite{Changat:19a},
and thus any pair of overlapping clusters $C_1\overlaps C_2$ together with
$V$ forms a $\gamma$-triangle. Thus a $\gamma$-acyclic weak hierarchy
contains no overlapping clusters and thus are hierarchies. Conversely,
hierarchies are $\gamma$-acyclic since every pure cycle consists of a
sequence of consecutive overlapping clusters, and every $\gamma$-triangle
contains two overlapping sets. Therefore, we have
\begin{proposition}
  Let $R$ be a monotone transit function. Then $\mathscr{C}_R$ is
  $\gamma$-acyclic if and only if $\mathscr{C}_R$ is a hierarchy.
\end{proposition}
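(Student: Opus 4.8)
The plan is to prove the two directions separately, routing the nontrivial implication through the weak hierarchy property. The easy direction — a hierarchy on $V$ yields a $\gamma$-acyclic transit-set hypergraph — follows by observing that both configurations ruled out by $\gamma$-acyclicity contain a pair of overlapping hyperedges, which a laminar family cannot have. Concretely, in any $\beta$-cycle $C_1,\dots,C_n$ with private vertices $x_1,\dots,x_n$ one has, for every $i$, that $x_i\in C_i\cap C_{i+1}$, $x_{i-1}\in C_i\setminus C_{i+1}$, and $x_{i+1}\in C_{i+1}\setminus C_i$, so $C_i\overlaps C_{i+1}$; in particular this holds for pure cycles. A $\gamma$-triangle $\{C_1,C_2,C_3\}$ contains, by definition, $u\in C_1\setminus C_2$ and $v\in C_2\setminus C_1$ with $C_1\cap C_2\ne\emptyset$, hence again $C_1\overlaps C_2$. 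Since $A\cap B\in\{A,B,\emptyset\}$ for all $A,B$ in a hierarchy, neither configuration can occur, so $\mathscr{C}_R$ is $\gamma$-acyclic.

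For the converse I would argue in two steps. First, $\gamma$-acyclicity of $\mathscr{C}_R$ forces the weak hierarchy triple-intersection identity: if it failed, there would be pairwise overlapping $C_1,C_2,C_3\in\mathscr{C}_R$ with $C_1\cap C_2\cap C_3$ a proper subset of each of $C_1\cap C_2$, $C_1\cap C_3$, $C_2\cap C_3$; then $C_1\cap C_2\cap C_3\subsetneq C_1\cap C_3$ produces $u\in(C_1\setminus C_2)\cap C_3$ and $C_1\cap C_2\cap C_3\subsetneq C_2\cap C_3$ produces $v\in(C_2\setminus C_1)\cap C_3$, so $\{C_1,C_2,C_3\}$ is a $\gamma$-triangle — a contradiction. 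Second, I would upgrade ``weak hierarchy'' to ``hierarchy''. Applying the triple-intersection identity to the three pairwise intersecting sets $R(x,y),R(x,z),R(y,z)$ (intersecting by \AX{(t1)}) shows in each of the three cases that one of $x\in R(y,z)$, $y\in R(x,z)$, $z\in R(x,y)$ holds, i.e.\ $R$ satisfies \AX{(w)}; since $R$ is monotone, \AX{(w)} implies \AX{(a')}, so $V\in\mathscr{C}_R$. If $\mathscr{C}_R$ now contained an overlapping pair $C_1\overlaps C_2$, then $C_1,C_2,V$ would be pairwise intersecting with some $u\in C_1\setminus C_2$ and $v\in C_2\setminus C_1$ lying in $V$, i.e.\ a $\gamma$-triangle, which is impossible. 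Hence $A\cap B\in\{A,B,\emptyset\}$ for all $A,B\in\mathscr{C}_R$; together with \AX{(KS)} (from \AX{(t3)}) and $V\in\mathscr{C}_R$, this is precisely the definition of a hierarchy.

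The only step that requires care rather than bookkeeping is the passage from the weak hierarchy property to the presence of $V$: this is where I invoke monotonicity of $R$ and the facts recorded earlier in the paper (\AX{(w)}$\Rightarrow$\AX{(a')}$\Rightarrow V\in\mathscr{C}_R$). It is also what makes $\gamma$-acyclicity collapse all the way down to hierarchies rather than to some intermediate class, so I regard it as the conceptual crux; everything else is a direct unfolding of the definitions of $\beta$-cycle, pure cycle, and $\gamma$-triangle, and I do not anticipate any genuine obstacle there.
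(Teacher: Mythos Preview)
Your proposal is correct and follows essentially the same route as the paper: both directions hinge on the observation that pure cycles and $\gamma$-triangles each contain an overlapping pair, and the nontrivial implication proceeds via ``$\gamma$-acyclic $\Rightarrow$ weak hierarchy $\Rightarrow$ \AX{(w)} $\Rightarrow$ $V\in\mathscr{C}_R$ $\Rightarrow$ any overlap yields a $\gamma$-triangle with $V$''. The only difference is that you spell out the derivation of \AX{(w)} from the triple-intersection identity explicitly, whereas the paper simply invokes the equivalence recorded earlier in the preliminaries.
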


\section{Union-Closed Set Systems}
\label{sect:uc}

A set system $(V,\mathscr{C})$ is \emph{union-closed} if it contains all
singletons and satisfies
\begin{description}\setlength{\itemsep}{0pt}\setlength{\parskip}{0pt}%
\item[(UC)] If $C',C''\in\mathscr{C}$ and $C'\cap C''\ne\emptyset$ then
  $C'\cup C''\in\mathscr{C}$.
\end{description}

In \cite{Changat:21w} we considered the following two properties:
\begin{description}\setlength{\itemsep}{0pt}\setlength{\parskip}{0pt}%
\item[\AX{(uc)}] If $R(x,y)\cap R(u,v)\ne\emptyset$ then there exist
  $p,q\in R(x,y)\cup R(u,v)$ such that
  $R(x,y)\cup R(u,v)=R(p,q)$.
\item[\AX{(u)}] If $z\in R(u,v)$ then $R(u,v)=R(u,z)\cup R(z,v)$.
\end{description}
A monotone transit function satisfies \AX{(uc)} if and only if the
corresponding $\mathscr{T}$-system satisfies \AX{(UC)}. Axiom \AX{(u)}
appeared as a property of cut-vertex transit functions of hypergraphs in
\cite{Changat:21a} and as property \AX{(h'')} in \cite{Changat:18a} in the
context of characterizing hierarchical clustering systems. It was studied
further in \cite{Changat:21w}, where it was left as an open question
whether weak hierarchies with canonical transit functions that satisfy
\AX{(u)} are union-closed. The following result gives an affirmative
answer:

\begin{theorem}
  \label{thm:uc<=>u,w}
  If $R$ is a monotone transit function. Then $R$ satisfies \AX{(uc)} if
  and only if it satisfies \AX{(u)} and \AX{(w)}.
\end{theorem}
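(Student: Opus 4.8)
The claim is an iff, so I will prove the two directions separately. Since $R$ is monotone, it corresponds to a $\mathscr T$-system $\mathscr C = \mathscr C_R$, and by the remark preceding the theorem, \AX{(uc)} for $R$ is equivalent to \AX{(UC)} for $\mathscr C$. This lets me translate freely between the transit-function language and the set-system language, whichever is more convenient at each step.

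\textbf{The easy direction ((uc) $\Rightarrow$ (u) and (w)).} Assume $R$ satisfies \AX{(uc)}. For \AX{(w)}: I recall from the preliminaries that \AX{(w)} for a monotone $R$ is equivalent to $\mathscr C_R$ being a weak hierarchy, and it was noted that union-closed systems are in particular totally balanced, hence contain no $\beta$-cycle of length $3$, hence are weak hierarchies — but to be safe I would give the direct argument: suppose $z \in R(x,y)$, $y \in R(x,z)$, $x \in R(y,z)$ all fail for some triple, and derive a contradiction with \AX{(UC)} by taking the three sets $R(x,y), R(x,z), R(y,z)$ and noting they pairwise intersect (each contains at least two of $x,y,z$ by \AX{(t1)}), so their pairwise unions lie in $\mathscr C$; chasing minimality of transit sets gives the contradiction. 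For \AX{(u)}: let $z \in R(u,v)$. By \AX{(t1)}, $u \in R(u,z)$ and $u \in R(u,v)$, so $R(u,z) \cap R(u,v) \ne \emptyset$, and similarly $z \in R(u,z) \cap R(z,v)$, so $R(u,z)$ and $R(z,v)$ overlap at $z$. Monotonicity gives $R(u,z) \cup R(z,v) \subseteq R(u,v)$. By \AX{(uc)} there are $p,q \in R(u,z)\cup R(z,v)$ with $R(p,q) = R(u,z)\cup R(z,v)$; since $u,v$ both lie in this union ($u \in R(u,z)$, $v \in R(z,v)$) and it is a transit set, \AX{(KR)} (minimality) forces $R(u,v) \subseteq R(p,q) = R(u,z)\cup R(z,v)$, hence equality. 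This direction I expect to be routine.

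\textbf{The hard direction ((u) and (w) $\Rightarrow$ (uc)).} This is the real content. Assume $R$ is monotone and satisfies \AX{(u)} and \AX{(w)}. I want to show: if $A := R(x,y)$ and $B := R(u,v)$ overlap — actually just intersect — then $A \cup B$ is a transit set. The plan is to locate explicit endpoints. Since \AX{(w)} holds, $\mathscr C_R$ is a weak hierarchy satisfying \AX{(K2)} (noted in the preliminaries), so in particular $A \cap B \in \mathscr C_R$; pick $z \in A\cap B$. The idea is to ``extend'' from $z$ to the far ends of $A$ and of $B$ and glue using \AX{(u)}. Concretely: writing $A = R(x,y)$, one of $x,y$ should be ``on the far side'' of $z$; I would argue using \AX{(w)} applied to $\{x,y,z\}$ that since $z \in R(x,y)$, we get $R(x,z)$ or $R(z,y)$ equals… no — rather, $z \in R(x,y)$ gives (via \AX{(u)}) $R(x,y) = R(x,z) \cup R(z,y)$. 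Similarly $z \in R(u,v)$ gives $R(u,v) = R(u,z) \cup R(z,v)$. So $A \cup B = R(x,z) \cup R(z,y) \cup R(u,z) \cup R(z,v)$, a union of four transit sets each containing $z$. Now I want to collapse this fourfold union to a single transit set $R(p,q)$. The key sub-lemma I will need is: if $C_1, C_2$ are transit sets with $z \in C_1 \cap C_2$, then $C_1 \cup C_2$ is a transit set — i.e.\ the binary case of \AX{(uc)} but only for sets sharing a common point $z$ that are themselves of the form $R(\cdot, z)$. For two such sets $R(a,z)$ and $R(b,z)$: by \AX{(w)} on $\{a,b,z\}$, one of the three betweenness relations holds; if $z \in R(a,b)$ then by \AX{(u)}, $R(a,b) = R(a,z)\cup R(z,b)$ and we are done with $p=a, q=b$. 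If instead $a \in R(b,z)$ (so $R(a,z) \subseteq R(b,z)$ by monotonicity, since $a,z \in R(b,z)$) then $R(a,z) \cup R(b,z) = R(b,z)$, again a transit set; symmetrically if $b \in R(a,z)$. So in all cases $R(a,z)\cup R(b,z)$ is a transit set of the form $R(\cdot,z)$ or $R(\cdot,\cdot)$ — and when it is $R(\cdot,\cdot)$ with $z$ in it, \AX{(u)} rewrites it back into the ``$\cup$ through $z$'' form. I then apply this sub-lemma repeatedly: combine $R(x,z)$ with $R(y,z)$, then absorb $R(u,z)$, then $R(v,z)$, maintaining at each stage the invariant that the partial union is a transit set of the form $R(s,z)$ or $R(s,t)$ with $z$ in between (so \AX{(u)} keeps it in a normal form). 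After all four are absorbed, $A\cup B = R(p,q)$ for the resulting endpoints, which lie in $A \cup B$, giving \AX{(uc)}. The main obstacle is bookkeeping the induction cleanly — ensuring that after each merge the result really is again expressible as $R(\cdot,z) \cup R(z,\cdot)$ so the sub-lemma applies to the next set — and checking the sub-lemma handles the degenerate cases (e.g.\ one set contained in another) correctly; I may need \AX{(KR)}/minimality to pin down that no strictly larger transit set sneaks in.
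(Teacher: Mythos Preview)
Your proposal is correct, and for the hard direction it takes a genuinely different route from the paper. The paper argues by an ascending-chain construction: starting from arbitrary $p\in R(x,y)\setminus R(u,v)$ and $q\in R(u,v)\setminus R(x,y)$, it shows (via \AX{(w)} and \AX{(u)}) that $R(p,q)\subseteq R(x,y)\cup R(u,v)$, and whenever the inclusion is strict it picks a point $d$ outside $R(p,q)$ and manufactures a strictly larger transit set (e.g.\ $R(d,q)\supsetneq R(p,q)$) still contained in $R(x,y)\cup R(u,v)$; finiteness terminates the chain at the desired equality. Your approach is structural rather than iterative: fix $z\in R(x,y)\cap R(u,v)$, use \AX{(u)} to decompose $R(x,y)\cup R(u,v)$ into four sets of the form $R(\cdot,z)$, and merge them using the sub-lemma that $R(a,z)\cup R(b,z)$ is always a transit set (proved by \AX{(w)} on $\{a,b,z\}$). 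The sub-lemma is a clean reusable device and makes the argument more conceptual; the paper's chain argument avoids setting up an induction but pays for it with more ad-hoc case analysis.

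The one spot your sketch leaves open is exactly the bookkeeping you flagged: after a merge you may hold $R(s,t)$ with $z\in R(s,t)$, hence $R(s,z)\cup R(t,z)$ by \AX{(u)}, and you must absorb a further $R(a,z)$ --- a three-term union your binary sub-lemma does not directly cover. This does close: apply the sub-lemma to the pairs $(s,a)$ and $(t,a)$; if either pair is $\subseteq$-comparable the three-term union collapses to two terms. In the remaining case both pairs are incomparable, so $R(s,z)\cup R(a,z)=R(s,a)$ and $R(t,z)\cup R(a,z)=R(t,a)$ with $z$ in each; now \AX{(w)} on $\{s,t,a\}$ gives $a\in R(s,t)$ (whence $R(a,z)\subseteq R(s,t)$ and the union is $R(s,t)$), or $s\in R(t,a)=R(t,z)\cup R(a,z)$ (forcing $R(s,z)$ into one of the other two pieces), or the symmetric case --- in each instance the union reduces to a two-term one handled by the sub-lemma. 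With this filled in, your induction is rigorous. For the easy direction the paper simply cites \cite{Changat:21w} for both implications; your direct argument for \AX{(uc)}$\Rightarrow$\AX{(u)} is fine, but your sketch for \AX{(uc)}$\Rightarrow$\AX{(w)} is thin and you may prefer to cite rather than reinvent it.
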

\begin{proof}
  Suppose $R$ satisfies \AX{(uc)}. Then \cite[Thm.3]{Changat:21w} shows
  that $R$ satisfies \AX{(w)}, and \cite[Lemma~8]{Changat:21w} establishes
  that $R$ satisfies \AX{(u)}.

  For the converse, assume that $R$ satisfies \AX{(u)} and \AX{(w)} and
  suppose that $R(x,y)\cap R(u,v)\neq \emptyset$ and there exists $p\in
  R(x,y)\setminus R(u,v)$ and $q\in R(u,v)\setminus R(x,y)$ with
  $R(p,q)\neq R(x,y)\cup R(u,v)$. Let $a\in R(x,y)\cap R(u,v)$, the
  \AX{(w)} (more precisely the equivalent condition \AX{(w$_3$)} in
  \cite[Thm.~1]{Changat:21w}) implies $a\in R(p,q)$ and \AX{(u)} yields
  $R(p,q)=R(p,a)\cup R(a,q)\subset R(x,y)\cup R(u,v)$. By assumption, there
  exists $d_1\in R(x,y)\cup R(u,v)$ such that $d_1\notin R(p,q)$.

  Suppose $d_1\in R(x,y)\setminus R(u,v)$. Invoking \AX{(w$_3$}) again, we
  obtain $a\in R(d_1,q)$ and thus \AX{(u)} implies $R(d_1,q)=R(d_1,a)\cup
  R(a,q)$. Moreover, $R(p,q)=R(p,a)\cup R(a,q)$. From $p,d_1\in R(x,y)$ we
  have $R(p,d_1)\subseteq R(x,y)$ by monotonicity, and therefore $q\notin
  R(p,d_1)$. If $p\in R(d_1,q)$, then $p\in R(d_1,a)\cup R(a,q)$.  The case
  is impossible since $p\in R(a,q)$ implies $p\in R(u,v)$ contradicting our
  assumptions; thus $p\in R(a,d_1)$. By contraposition, $p\notin R(a,d_1)$
  implies $p\notin R(d_1,q)$; in this case $p$, $q$, and $d_1$ violate
  \AX{(w)}. Therefore $p\in R(d_1,a)$ and thus $R(p,a)\subset R(d_1,a)$
  by monotonicity. This implies 
  $R(p,q)=R(p,a)\cup R(a,q)\subset R(d_1,a)\cup R(a,q)=R(d_1,q)$.

  If $R(d_1,q)= R(x,y)\cup R(u,v)$ we are done. Otherwise, there exists $d_2\in
  R(x,y)\cup R(u,v)$ with $d_2\notin R(d_1,q)$. Suppose $d_2\in
  R(u,v)\setminus R(x,y)$. Thus $d_2,q\in R(u,v)$ and monotonicity implies
  $R(d_2,q)\subseteq R(u,v)$, which in turn implies $d_1\notin
  R(d_2,q)$. Now if $q\in R(d_1,d_2)$ then $q\in R(d_1,a)\cup
  R(a,d_2)$. Since $a,d_1\in R(u,v)$ we have $q\notin R(d_1,a)$ and thus
  $q\in R(a,d_2)$. That is $q\notin R(a,d_2)$ implies $q\notin R(d_1,d_2)$
  and thus $d_1$, $d_2$, and $q$ violate \AX{(w)}. Therefore, we must have
  $q\in R(a,d_2)$ and hence $R(a,q)\subset R(d_2,a)$. This implies
  $R(d_1,q)=R(d_1,a)\cup R(a,q)\subset R(d_1,a)\cup
  R(a,d_2)=R(d_1,d_2)$. If $R(d_1,d_2)= R(x,y)\cup R(u,v)$, we are done.

  Otherwise, there exists $d_3\in R(x,y)\cup R(u,v)$ such that $d_3\notin
  R(d_1,d_2)$. Using the same arguments, there exists a larger $R(d_3,d_2)$
  or $R(d_3,d_2)$ properly containing $R(d_1,d_2)$ and contained in
  $R(x,y)\cup R(u,v)$. Since $R(x,y)\cup R(u,v)$ is finite, this iteration
  reaches a pair of points $s\in R(x,y)\setminus R(u,v)$, and $t\in
  R(u,v)\setminus R(x,y)$ such that $R(s,t)=R(x,y)\cup R(u,v)$.  Thus $R$
  satisfies \AX{(uc)}.
\end{proof}
\begin{proposition}\cite{Changat:21w}
  \label{prop:uc=>py}
  A monotone transit function that satisfies \AX{(uc)} is pyramidal.
\end{proposition}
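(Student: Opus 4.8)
The plan is to establish the two ingredients of pyramidality separately: pre-pyramidality via Duchet's betweenness characterization of interval hypergraphs, and closedness via the remark (already recorded above) that a monotone transit function with a pre-pyramidal system of transit sets automatically has a closed, hence pyramidal, $\mathscr{C}_R$. First I would unwind the hypothesis. Since $R$ satisfies \AX{(uc)}, the $\mathscr{T}$-system $\mathscr{C}_R$ satisfies \AX{(UC)}, and by Theorem~\ref{thm:uc<=>u,w} the transit function $R$ also satisfies \AX{(w)} (so in particular $V\in\mathscr{C}_R$ and $\mathscr{C}_R$ is a clustering system). Axiom \AX{(u)} will not be needed directly; only \AX{(UC)} on the set system and \AX{(w)} enter the argument. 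The case $|V|\le 2$ is trivial, so assume $|V|\ge 3$.

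The core step is the claim: for pairwise distinct $x,y,z\in V$ with $z\in R(x,y)$, the vertex $z$ lies between $x$ and $y$ in Duchet's sense, i.e.\ every hyperpath connecting $x$ and $y$ has an edge containing $z$. To prove it, take any hyperpath $x=v_0,E_1,v_1,\dots,E_k,v_k=y$ with $E_i\in\mathscr{C}_R$ and $v_{i-1},v_i\in E_i$. Because consecutive edges share the vertex $v_i\in E_i\cap E_{i+1}$, an easy induction using \AX{(UC)} shows that each running union $E_1\cup\dots\cup E_i$ belongs to $\mathscr{C}_R$, and hence $W:=\bigcup_{i=1}^{k}E_i\in\mathscr{C}_R$. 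Now $x,y\in W$, and since $R(x,y)=\bigcap\{C\in\mathscr{C}_R\mid x,y\in C\}$ we get $R(x,y)\subseteq W$, so $z\in W=\bigcup_i E_i$; thus some $E_i$ contains $z$, which proves the claim.

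With the claim in hand the rest is a short chain of citations. Given any three distinct vertices $x,y,z$, axiom \AX{(w)} places one of them in the transit set of the other two, so by the claim one of the three lies between the other two. By Duchet's criterion \cite{Duchet:78}, $(V,\mathscr{C}_R)$ is an interval hypergraph, i.e.\ $\mathscr{C}_R$ is pre-pyramidal; equivalently, there is a total order $<$ on $V$ with $[u,v]\subseteq R(u,v)$ and $R(u,v)$ an interval for all $u,v$. Finally, since pre-pyramidal clustering systems are weak hierarchies, $\mathscr{C}_R$ is closed, hence pyramidal, so $R$ is a pyramidal transit function.

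The main obstacle is the core claim, and within it the hypergraph bookkeeping: one must make sure that a ``hyperpath'' is genuinely a connected chain of edges, so that \AX{(UC)} can be iterated along it, and that the set produced by iterating \AX{(UC)} is exactly the union of the edges on the path (not some smaller set), so that avoiding $z$ on the path really forces $z\notin W$. Once this lemma is set up cleanly, everything else follows from results already available in the excerpt.
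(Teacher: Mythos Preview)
Your argument is correct. Note, however, that the paper does not actually prove Proposition~\ref{prop:uc=>py}: it is quoted from \cite{Changat:21w} without proof, so there is no in-paper argument to compare against. Your route via Duchet's betweenness characterization is clean and fully supported by material already stated in the excerpt: iterating \AX{(UC)} along a hyperpath to obtain $W=\bigcup_i E_i\in\mathscr{C}_R$, then using monotonicity (since $W=R(s,t)$ for some $s,t$ and $x,y\in W$ implies $R(x,y)\subseteq W$) to force $z\in W$, is exactly the right mechanism, and combining \AX{(w)} with Duchet's criterion and the paper's remark that pre-pyramidal monotone transit systems are automatically closed completes the chain. One cosmetic point: you invoke $R(x,y)=\bigcap\{C\in\mathscr{C}_R\mid x,y\in C\}$, which is fine, but the bare monotonicity step just mentioned already gives $R(x,y)\subseteq W$ directly.
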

Fig.~4 in \cite{Changat:21w} shows that the converse is not true.
Thm.~\ref{thm:uc<=>u,w} and Prop.~\ref{prop:uc=>py} together answer
affirmatively the questions in \cite{Changat:21w} whether \AX{(u)} and
\AX{(w)} together are sufficient to imply that a monotone transit function
$R$ is pyramidal.

Paired hierarchies and union-closed binary clustering systems are proper
sub-classes of pyramidal clustering systems. There are, however, binary
clustering systems that are union-closed but not paired hierarchies and
clustering systems that are paired hierarchies but not union-closed:
\begin{example}
  \label{ex:ph-not-uc}
  Consider the monotone transit function $R$ on $V=\{a,b,c,d\}$ defined by
  $R(a,b)=\{a,b\}, R(b,c)=\{b,c\}$ and other sets are singletons and
  $V$. Here $\mathscr{C}_R$ is a paired hierarchy but not union-closed.
\end{example}

\section{Weakly Pyramidal Transit Functions}
\label{sect:wp} 

Ref.~\cite{Nebesky:83} characterizes pre-pyramidal set systems with $\{A,
B, C\}$ as those that are weak hierarchies and satisfy
\begin{description}\setlength{\itemsep}{0pt}\setlength{\parskip}{0pt}%
\item[\AX{(WP)}] If $A$, $B$, $C$ have pairwise non-empty intersections,
  then one set is contained in the union of the two others.
\end{description}
For larger set systems $\mathscr{C}$ the condition is still necessary, but
no longer sufficient. Thus the term \emph{weak pre-pyramids} has been
suggested for weak hierarchies that satisfy \AX{(WP)}. The axiom can be
translated trivially to the language of transit functions:
\begin{description}\setlength{\itemsep}{0pt}\setlength{\parskip}{0pt}%
\item[\AX{(wp)}] If $R(u,v)\cap R(x,y)\ne\emptyset$,
  $R(u,v)\cap R(p,q)\ne\emptyset$ and $R(x,y)\cap R(p,q)\ne\emptyset$ then
  $R(p,q)\subseteq R(u,v)\cup R(x,y)$ or
  $R(u,v)\subseteq R(p,q)\cup R(x,y)$ or
  $R(x,y)\subseteq R(p,q)\cup R(u,v)$.
\end{description}

The third forbidden configuration in Eq.(\ref{eq:inthypg}) suggests
considering set systems satisfying the following property:
\begin{description}\setlength{\itemsep}{0pt}\setlength{\parskip}{0pt}%
\item[\AX{(I)}] Let $A,B,C\in\mathscr{C}$, $\emptyset\neq A\cap B\subseteq
  C$, and $C\setminus(A\cup B)\ne\emptyset$. Then $A\subseteq C$ or
  $B\subseteq C$.
\end{description}
Let us now consider the following related property for transit functions:
\begin{description}\setlength{\itemsep}{0pt}\setlength{\parskip}{0pt}%
\item[\AX{(i)}] If $\emptyset\ne R(x,y)\cap R(u,v)\subseteq R(p,q)$ and
  $R(p,q)\setminus \left(R(x,y)\cup R(u,v)\right)\neq \emptyset$, then
  $R(x,y)\subseteq R(p,q)$ or $R(u,v)\subseteq R(p,q)$.
\end{description}
\begin{fact}
  Let $R$ be a monotone transit function, and $\mathscr{C}_R$ the
  corresponding set of transit sets. Then $\mathscr{C}_R$
  satisfies \AX{(I)} if and only if $R$ satisfies \AX{(i)}.
\end{fact}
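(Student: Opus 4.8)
The plan is to prove both directions by translating between the two languages using the correspondence $\mathscr{C}_R=\{R(x,y)\mid x,y\in V\}$ and the fact that every transit set is of the form $R(x,y)$ for suitable arguments. The statement is essentially a bookkeeping observation, and the only subtlety is to make sure the existential quantifiers in \AX{(I)} and \AX{(i)} match up correctly.

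First I would prove that \AX{(i)} implies \AX{(I)}. Suppose $\mathscr{C}_R$ is the transit set system of a monotone transit function $R$ satisfying \AX{(i)}, and let $A,B,C\in\mathscr{C}_R$ with $\emptyset\ne A\cap B\subseteq C$ and $C\setminus(A\cup B)\ne\emptyset$. Since $A,B,C$ are transit sets, write $A=R(x,y)$, $B=R(u,v)$, $C=R(p,q)$ for appropriate points; then the hypotheses of \AX{(i)} are exactly met, so \AX{(i)} yields $R(x,y)\subseteq R(p,q)$ or $R(u,v)\subseteq R(p,q)$, i.e.\ $A\subseteq C$ or $B\subseteq C$. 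This gives \AX{(I)}.

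For the converse, assume $\mathscr{C}_R$ satisfies \AX{(I)} and let $x,y,u,v,p,q\in V$ with $\emptyset\ne R(x,y)\cap R(u,v)\subseteq R(p,q)$ and $R(p,q)\setminus(R(x,y)\cup R(u,v))\ne\emptyset$. Set $A\coloneqq R(x,y)$, $B\coloneqq R(u,v)$, $C\coloneqq R(p,q)$; these are members of $\mathscr{C}_R$, and the hypotheses of \AX{(I)} hold verbatim, so $A\subseteq C$ or $B\subseteq C$, which is precisely the conclusion of \AX{(i)}. Hence $R$ satisfies \AX{(i)}.

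I do not expect any real obstacle here; the proof is a direct unwinding of the definitions together with the observation that the sets in \AX{(I)} range over all of $\mathscr{C}_R$, which by construction is exactly the set of values $R(x,y)$. If brevity is desired, the whole argument can be compressed to a single sentence remarking that \AX{(i)} is the verbatim restatement of \AX{(I)} under the identification $\mathscr{C}_R=\{R(x,y)\mid x,y\in V\}$, so that it holds for all transit sets precisely when it holds for all triples of argument pairs. One should perhaps note explicitly that monotonicity is not actually needed for this equivalence; it only guarantees (via the earlier Proposition) that $\mathscr{C}_R$ is a genuine $\mathscr{T}$-system and that $R=R_{\mathscr{C}_R}$, but the logical equivalence of the two axioms is formal.
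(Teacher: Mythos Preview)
Your proof is correct and matches the paper's treatment: the paper states this as an Observation without proof, since \AX{(i)} is literally \AX{(I)} rewritten with the substitution $A=R(x,y)$, $B=R(u,v)$, $C=R(p,q)$ and $\mathscr{C}_R=\{R(x,y)\mid x,y\in V\}$. Your remark that monotonicity is not needed for the bare equivalence is also accurate.
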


\begin{lemma}
	\label{lem:WP->I}
  Suppose $\mathscr{C}$ satisfies \AX{(WP)}. Then $\mathscr{C}$ also
  satisfies \AX{(I)}.
\end{lemma}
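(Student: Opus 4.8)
The plan is to derive \AX{(I)} directly from \AX{(WP)} by a short case analysis, handling separately the degenerate situations where the three sets $A$, $B$, $C$ in \AX{(I)} fail to have pairwise non-empty intersections, so that \AX{(WP)} does not apply directly.

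First I would set up the hypothesis of \AX{(I)}: let $A,B,C\in\mathscr{C}$ with $\emptyset\ne A\cap B\subseteq C$ and $C\setminus(A\cup B)\ne\emptyset$; the goal is $A\subseteq C$ or $B\subseteq C$. Since $A\cap B\subseteq C$ and $A\cap B\ne\emptyset$, automatically $A\cap C\ne\emptyset$ and $B\cap C\ne\emptyset$. Together with $A\cap B\ne\emptyset$, the three sets $A,B,C$ have pairwise non-empty intersections, so \AX{(WP)} applies and yields one of three containments: (a) $A\subseteq B\cup C$, (b) $B\subseteq A\cup C$, or (c) $C\subseteq A\cup B$. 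Case (c) is immediately impossible, since it contradicts $C\setminus(A\cup B)\ne\emptyset$.

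So I would be left with (a) or (b), which are symmetric; consider (a), $A\subseteq B\cup C$. I would like to conclude $A\subseteq C$ (or, if that fails, that we are forced into the situation giving $B\subseteq C$). Here is where the remaining work sits: from $A\subseteq B\cup C$ one has $A\setminus C\subseteq B$, hence $A\setminus C\subseteq A\cap B\subseteq C$, which forces $A\setminus C=\emptyset$, i.e.\ $A\subseteq C$. The symmetric argument from (b) gives $B\subseteq C$. This is the crux computation: the hypothesis $A\cap B\subseteq C$ is exactly what collapses the \AX{(WP)}-containment "$A\subseteq B\cup C$" into "$A\subseteq C$".

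**The main (and only real) obstacle** is making sure \AX{(WP)} is genuinely applicable — i.e.\ that the three sets do have pairwise non-empty intersections — which the hypotheses of \AX{(I)} guarantee as noted above, so there are in fact no degenerate cases to worry about. The argument is therefore short; I would present it as the three-way split from \AX{(WP)}, discard the third branch using $C\setminus(A\cup B)\ne\emptyset$, and close each of the remaining two branches with the one-line set-difference computation using $A\cap B\subseteq C$.
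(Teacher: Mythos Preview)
Your proposal is correct and follows essentially the same approach as the paper: verify the pairwise intersections are non-empty, apply \AX{(WP)}, discard the alternative $C\subseteq A\cup B$ using $C\setminus(A\cup B)\ne\emptyset$, and in each remaining case use $A\cap B\subseteq C$ to collapse the union-containment into the desired inclusion. The paper's version of the closing computation is phrased as $A=(A\cap B)\cup(A\cap C)\subseteq C$, which is the same set-theoretic identity you express via $A\setminus C\subseteq A\cap B\subseteq C$.
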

\begin{proof}
  Let $A,B,C\in \mathscr{C}$ such that $\emptyset\ne A\cap B\subseteq C$
  and $C\setminus(A\cup B)\ne\emptyset$.  Then \AX{(WP)} implies
  $A\subseteq B\cup C$ or $B\subseteq A\cup C$.  In the first case, we
  obtain $A=(A\cap B)\cup(A\cap C)\subseteq C\cup(A\cap C)=C$. Similarly,
  in the second case, we obtain $B\subseteq C$, and thus \AX{(I)} holds.
\end{proof}

\begin{lemma}
	\label{lem:wp->i}
  Let $R$ be a monotone transit function satisfying \AX{(wp)}, then $R$
  satisfies \AX{(i)}.
\end{lemma}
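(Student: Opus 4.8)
The plan is to transfer Lemma~\ref{lem:WP->I} from the level of set systems to the level of transit functions. Recall that \AX{(i)} is exactly \AX{(I)} read off the transit sets $\mathscr{C}_R$ (by the Fact immediately preceding Lemma~\ref{lem:WP->I}), so it would suffice to show that a monotone transit function satisfying \AX{(wp)} has a set system $\mathscr{C}_R$ satisfying \AX{(WP)} — then Lemma~\ref{lem:WP->I} finishes the job. So the first step is to verify that \AX{(wp)} for $R$ implies \AX{(WP)} for $\mathscr{C}_R$: take three sets $A,B,C\in\mathscr{C}_R$ with pairwise non-empty intersections, write $A=R(u,v)$, $B=R(x,y)$, $C=R(p,q)$ for suitable pairs of points (possible since every transit set is of this form), and apply \AX{(wp)} directly to obtain that one of the three is contained in the union of the other two, which is precisely \AX{(WP)}.

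With that reduction in hand, the proof is essentially a one-liner: since $R$ is monotone and satisfies \AX{(wp)}, the set system $\mathscr{C}_R$ satisfies \AX{(WP)}; by Lemma~\ref{lem:WP->I}, $\mathscr{C}_R$ satisfies \AX{(I)}; and by the Fact preceding Lemma~\ref{lem:WP->I}, $R$ satisfies \AX{(i)}. Alternatively, one could give a self-contained direct argument mirroring the proof of Lemma~\ref{lem:WP->I}: assume $\emptyset\ne R(x,y)\cap R(u,v)\subseteq R(p,q)$ and $R(p,q)\setminus(R(x,y)\cup R(u,v))\ne\emptyset$; note the hypotheses force all three pairwise intersections $R(x,y)\cap R(u,v)$, $R(x,y)\cap R(p,q)$, $R(u,v)\cap R(p,q)$ to be non-empty (the first is non-empty by assumption, and it is contained in each of $R(p,q)$ and in each of $R(x,y),R(u,v)$, so it is contained in the other two intersections as well); then \AX{(wp)} gives one of the three containments $R(p,q)\subseteq R(x,y)\cup R(u,v)$, $R(x,y)\subseteq R(p,q)\cup R(u,v)$, $R(u,v)\subseteq R(p,q)\cup R(x,y)$. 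The first is excluded by hypothesis, so say $R(x,y)\subseteq R(p,q)\cup R(u,v)$; then $R(x,y)=(R(x,y)\cap R(p,q))\cup(R(x,y)\cap R(u,v))\subseteq R(p,q)\cup R(p,q)=R(p,q)$, using $R(x,y)\cap R(u,v)\subseteq R(p,q)$. Symmetrically the remaining case yields $R(u,v)\subseteq R(p,q)$, establishing \AX{(i)}.

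There is essentially no obstacle here; the only point requiring a moment's care is confirming that the non-emptiness of $R(x,y)\cap R(u,v)$ together with the containment in $R(p,q)$ indeed forces the other two pairwise intersections to be non-empty, so that \AX{(wp)} is applicable — but this is immediate as noted above. The monotonicity hypothesis is used only insofar as it guarantees that \AX{(wp)} (a statement about transit sets arising from pairs of points) faithfully encodes \AX{(WP)} for $\mathscr{C}_R$; no further use of \AX{(m)} is needed in the deduction. I would present the short proof via the reduction to Lemma~\ref{lem:WP->I}, since it is the cleanest.

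\begin{proof}
  It suffices to observe that $\mathscr{C}_R$ satisfies \AX{(WP)}: given
  $A,B,C\in\mathscr{C}_R$ with pairwise non-empty intersections, write
  $A=R(u,v)$, $B=R(x,y)$, and $C=R(p,q)$; then \AX{(wp)} yields that one
  of these three sets is contained in the union of the other two. By
  Lemma~\ref{lem:WP->I}, $\mathscr{C}_R$ satisfies \AX{(I)}, and hence by
  the preceding Fact, $R$ satisfies \AX{(i)}.
\end{proof}
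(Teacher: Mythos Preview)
Your proposal is correct. Your primary route---reducing to Lemma~\ref{lem:WP->I} at the set-system level via the Fact that \AX{(i)} for $R$ is equivalent to \AX{(I)} for $\mathscr{C}_R$---is a slightly different packaging from the paper's proof, which gives a self-contained contradiction argument directly at the level of transit sets (assume both conclusions of \AX{(i)} fail, produce witnesses $w_1\in R(x,y)\setminus R(p,q)$ and $w_2\in R(u,v)\setminus R(p,q)$, and observe that none of the three sets lies in the union of the other two, contradicting \AX{(wp)}). Your ``alternative'' direct argument is essentially the paper's proof in contrapositive form, with the set-algebra step $R(x,y)=(R(x,y)\cap R(p,q))\cup(R(x,y)\cap R(u,v))\subseteq R(p,q)$ made explicit rather than left implicit in the contradiction. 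The reduction you give is more economical since Lemma~\ref{lem:WP->I} and the Fact are already available; the paper's version has the minor advantage of being self-contained.
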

\begin{proof}
  Consider $\emptyset\neq R(x,y)\cap R(u,v)\subseteq R(p,q)$ where
  $R(p,q)\setminus (R(x,y)\cup R(u,v))\neq \emptyset$ and
  assume, for contradiction, that neither $R(x,y)\subseteq R(p,q)$ nor
  $R(u,v)\subseteq R(p,q)$ is true.  Then there exists $w_1\in
  R(x,y)\setminus R(p,q)$, and $w_2\in R(u,v)\setminus R(p,q)$. Since
  $R(x,y)\cap R(u,v)\subseteq R(p,q)$, we
  obtain $w_1,w_2\notin R(x,y)\cap R(u,v)$. Moreover, $R(p,q)\setminus
  (R(x,y)\cup R(u,v))\neq \emptyset $. Hence none of the sets
  $R(x,y)$, $R(u,v)$, and $R(p,q)$ is contained in the union of the other
  two, contradicting \AX{(wp)}.
\end{proof}
The transit function $R$ and sets system $\mathscr{C}_R$ in
Example~\ref{i*-wp} below shows that the converses of both
Lemma~\ref{lem:WP->I} and \ref{lem:wp->i} do not hold.
\begin{example}\label{i*-wp}
  Let $R$ on $V=\{a,b,c,d,e,f\}$ be defined by
  $R(a,b)=R(a,c)=R(b,c)=\{a,b,c\}$, $R(a,d)=R(a,e)=R(d,e)=\{a,d,e\}$,
  $R(c,d)=R(c,f)=R(d,f)=\{c,d,f\}$ and all other sets are singletons or
  $V$. Here $R$ is monotone and satisfies \AX{(i)} but the sets
  $\{a,b,c\}$, $\{a,d,e\}$, $ \{c,d,f\}$ violate \AX{(wp)}.
\end{example}
Moreover, axioms \AX{(i)} and \AX{(w)} are independent. The canonical
transit function in Fig.~\ref{fig:cex1}B satisfies \AX{(i)} but violates
\AX{(w)} and in Fig.~\ref{fig:cex1}C, it satisfies \AX{(w)} but violates
\AX{(i)}.

\begin{lemma}
  Suppose $\mathscr{C}$ is a weak hierarchy. Then \AX{(I)} implies
  \AX{(WP)}.
\end{lemma}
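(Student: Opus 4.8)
The plan is to prove the contrapositive is not needed; instead I would argue directly.  Suppose $\mathscr{C}$ is a weak hierarchy satisfying \AX{(I)}, and let $A,B,C\in\mathscr{C}$ have pairwise non-empty intersections.  The goal is to show one of the three sets lies in the union of the other two.  The first step is to invoke the weak-hierarchy identity $A\cap B\cap C\in\{A\cap B,\,A\cap C,\,B\cap C\}$; without loss of generality assume $A\cap B\cap C = A\cap B$, i.e.\ $A\cap B\subseteq C$.  Since all pairwise intersections are non-empty, $\emptyset\ne A\cap B\subseteq C$, so we are almost in the position to apply \AX{(I)} with the roles ``$A,B$ inside $C$''.

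The second step is a case split on whether $C\setminus(A\cup B)=\emptyset$.  If $C\subseteq A\cup B$, then $C$ is already contained in the union of the other two and we are done.  If $C\setminus(A\cup B)\ne\emptyset$, then \AX{(I)} applies and yields $A\subseteq C$ or $B\subseteq C$; in either case the contained set lies (trivially) in the union of the other two, and again \AX{(WP)} holds for the triple $\{A,B,C\}$.  Since the triple was arbitrary, $\mathscr{C}$ satisfies \AX{(WP)}.

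I do not expect any real obstacle here: the argument is essentially a two-line reduction, the only point requiring a moment's care being the bookkeeping of which pairwise intersection equals the triple intersection (the three cases are symmetric under relabelling $A,B,C$, so ``without loss of generality'' is legitimate), and the trivial observation that $X\subseteq Y$ implies $X\subseteq Y\cup Z$ for any $Z$.  If one wanted to be fully explicit one could also note that \AX{(WP)} as stated is phrased for \emph{clustering systems} whereas \AX{(I)} is phrased for general $\mathscr{C}$, but since the hypothesis already says $\mathscr{C}$ is a weak hierarchy this is not an issue.  The mild subtlety, if any, is simply making sure the definition of \AX{(WP)} (``one set is contained in the union of the two others'') is read as an unordered statement, so that producing \emph{any} of the three containments suffices.
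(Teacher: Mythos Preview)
Your proof is correct and follows essentially the same approach as the paper's: invoke the weak-hierarchy identity to assume w.l.o.g.\ $A\cap B\cap C=A\cap B\subseteq C$, then split on whether $C\setminus(A\cup B)$ is empty, applying \AX{(I)} in the non-empty case. The paper's argument is nearly identical line for line.
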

\begin{proof}
  Suppose $A,B,C\in \mathscr{C}$ intersect pairwisely. Then $A\cap B\cap
  C\ne\emptyset$ and we may assume, w.l.o.g., $A\cap B\cap C = A\cap B$,
  and thus $A\cap B\subseteq C$. Then either $C\setminus(A\cup
  B)=\emptyset$, i.e., $C\subseteq A\cup B$, or \AX{(I)} implies
  $A\subseteq C$ and thus also $A\subseteq B\cup C$ or $B\subseteq C$ and
  thus also $B\subseteq A\cup C$. In either case, therefore, one of the
  three sets is contained in the union of the other two, and thus
  $\mathscr{C}$ satisfies \AX{(WP)}.
\end{proof}
\begin{corollary}
  If $\mathscr{C}$ is a weak hierarchy, then \AX{(I)} and \AX{(WP)} are
  equivalent.
\end{corollary}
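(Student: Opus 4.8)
The plan is to simply chain together the two lemmas that immediately precede the corollary. Lemma~\ref{lem:WP->I} already establishes, for an \emph{arbitrary} set system $\mathscr{C}$ (and hence in particular for a weak hierarchy), that \AX{(WP)} implies \AX{(I)}: no hypothesis beyond having three sets $A,B,C\in\mathscr{C}$ with $\emptyset\ne A\cap B\subseteq C$ and $C\setminus(A\cup B)\ne\emptyset$ is used in its proof. So one direction of the claimed equivalence comes for free.

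For the converse I would invoke the (unnamed) lemma stated just above the corollary, which asserts precisely that when $\mathscr{C}$ is a weak hierarchy, \AX{(I)} implies \AX{(WP)}. The point at which the weak-hierarchy assumption is needed is the reduction of the ``pairwise non-empty intersection'' configuration of \AX{(WP)} to the ``$A\cap B\subseteq C$'' configuration of \AX{(I)}: from $A\cap B\cap C\ne\emptyset$ and the weak-hierarchy condition $A\cap B\cap C\in\{A\cap B,\,A\cap C,\,B\cap C\}$, after relabelling we may assume $A\cap B\subseteq C$; then either $C\subseteq A\cup B$ (and \AX{(WP)} is satisfied for this triple), or \AX{(I)} forces $A\subseteq C$ or $B\subseteq C$, each of which again yields one of the containments $A\subseteq B\cup C$, $B\subseteq A\cup C$, $C\subseteq A\cup B$ demanded by \AX{(WP)}.

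Putting the two implications together gives the equivalence, so the corollary is an immediate consequence of the preceding two lemmas. There is no real obstacle here: the only conceptual content is that \AX{(WP)} and \AX{(I)} encode the same forbidden configuration once the weak-hierarchy hypothesis collapses the triple intersection of a pairwise-intersecting triple onto one of the three pairwise intersections; since both lemmas have already been proved, nothing further needs to be argued.
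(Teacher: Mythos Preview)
Your proposal is correct and matches the paper's approach exactly: the corollary is stated without proof in the paper precisely because it follows immediately from combining Lemma~\ref{lem:WP->I} (giving \AX{(WP)}$\Rightarrow$\AX{(I)} unconditionally) with the unnamed lemma just before the corollary (giving \AX{(I)}$\Rightarrow$\AX{(WP)} under the weak-hierarchy hypothesis). Your explanation of how the weak-hierarchy assumption is used in the second implication is accurate and essentially recapitulates the paper's proof of that lemma.
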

A transit function property that is weaker than the axiom \AX{(u)} and 
proved to be weaker than the axiom \AX{(wp)} in \cite{Changat:21w} is:
\begin{description}
\item[\AX{(o')}] For all $u,v\in V$ and $z\in R(u,v)$ there exist $p,q\in
  R(u,v)$ such that $R(p,z)\cup R(z,q)=R(u,v)$.
\end{description}
We next show that \AX{(i)} is in general weaker than \AX{(wp)} and stronger
than \AX{(o')}:
\begin{lemma}\label{i*->o'}
  If a monotone transit function $R$ satisfies \AX{(i)}, then it also
  satisfies \AX{(o')}.
\end{lemma}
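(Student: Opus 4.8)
The plan is to show that if $R$ satisfies \AX{(i)} then for every $u,v$ and every $z \in R(u,v)$, we can find $p,q \in R(u,v)$ with $R(p,z) \cup R(z,q) = R(u,v)$. The natural candidates are $p = u$ and $q = v$: monotonicity immediately gives $R(u,z) \cup R(z,v) \subseteq R(u,v)$, so the whole content is the reverse inclusion. If $z = u$ or $z = v$ the claim is trivial (using \AX{(t1)}, \AX{(t3)}, one of the two pieces is already $R(u,v)$ after noting $R(u,v) = R(u,z) \cup R(z,v)$ when $z \in \{u,v\}$), so assume $z \notin \{u,v\}$. Suppose, for contradiction, that $R(u,z) \cup R(z,v) \subsetneq R(u,v)$, i.e.\ there exists $w \in R(u,v) \setminus (R(u,z) \cup R(z,v))$.

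**Next I would** set up an application of \AX{(i)} with the three transit sets $R(u,z)$, $R(z,v)$, and $R(u,v)$. We have $z \in R(u,z) \cap R(z,v)$ by \AX{(t1)}/\AX{(t2)}, so $R(u,z) \cap R(z,v) \neq \emptyset$; by monotonicity $R(u,z) \cup R(z,v) \subseteq R(u,v)$, so in particular $R(u,z) \cap R(z,v) \subseteq R(u,v)$; and the assumed $w$ witnesses $R(u,v) \setminus (R(u,z) \cup R(z,v)) \neq \emptyset$. Hence the hypotheses of \AX{(i)} are met (with $R(x,y) = R(u,z)$, $R(u,v)$ replaced by $R(z,v)$, and $R(p,q) = R(u,v)$), and we conclude $R(u,z) \subseteq R(u,v)$ or $R(z,v) \subseteq R(u,v)$ — but this is vacuous, since both inclusions already hold by monotonicity. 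So a direct one-shot application of \AX{(i)} does not bite; the real argument must iterate, enlarging one of the two pieces the way the proof of Theorem~\ref{thm:uc<=>u,w} enlarges $R(d_1,q)$, $R(d_1,d_2)$, etc.

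**The core of the argument** is therefore an iterative "absorption" step: starting from $p_0 = u$, $q_0 = v$, as long as $R(p_i,z) \cup R(z,q_i) \subsetneq R(u,v)$ pick $w \in R(u,v) \setminus (R(p_i,z)\cup R(z,q_i))$ and use \AX{(i)} — applied to $R(p_i,z)$, $R(z,q_i)$, and a suitable third set built from $w$ and $z$ — to replace $p_i$ by some $p_{i+1}$ (or $q_i$ by $q_{i+1}$) with a strictly larger corresponding piece while staying inside $R(u,v)$. Concretely, consider $R(w,z)$: by monotonicity $R(w,z) \subseteq R(u,v)$, and $z \in R(w,z)$. The three sets $R(p_i,z)$, $R(z,q_i)$, $R(w,z)$ all contain $z$, hence pairwise intersect, and $w \in R(w,z) \setminus (R(p_i,z)\cup R(z,q_i))$, so by \AX{(wp)}-type reasoning — but we only have \AX{(i)}, so one has to feed these into \AX{(i)} with $R(w,z)$ in the role of the containing set $R(p,q)$ after checking $R(p_i,z)\cap R(z,q_i) \subseteq R(w,z)$, which holds because that intersection contains $z$ and lies in $R(u,v)$... and here a subtlety appears: $R(p_i,z) \cap R(z,q_i)$ need not be contained in $R(w,z)$. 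This is the point that needs care, and I expect the actual proof instead uses \AX{(i)} with the \emph{two} pieces being $R(p_i,z)$ and $R(w,z)$ (both containing $z$, intersecting, with union inside $R(z,q_i)$ possibly failing...) — in other words the bookkeeping of which pair plays $R(x,y),R(u,v)$ and which plays $R(p,q)$ is the delicate part.

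**I expect the main obstacle** to be exactly this: choosing the right triple of transit sets at each iteration so that \AX{(i)} delivers an inclusion that actually produces a strictly larger $R(p_{i+1},z)$ or $R(z,q_{i+1})$, rather than a vacuous consequence of monotonicity. The resolution should parallel the $d_1, d_2, \dots$ chase in the proof of Theorem~\ref{thm:uc<=>u,w}: use \AX{(i)} on $R(u,z)$, $R(w,z)$, $R(u,w)$ (noting $z$ may or may not lie in $R(u,w)$, which splits into cases, with the "bad" case contradicting something), conclude $R(u,z) \subseteq R(u,w)$, hence $R(w,z) \cup R(u,z) \subseteq R(u,w) \subseteq R(u,v)$ is a strictly larger piece if we now take $p_{i+1}$ so that $R(p_{i+1}, z) \supseteq R(u,w)$ — the natural choice being $p_{i+1}$ on the "far side," e.g.\ $R(p_{i+1},z) := R(w,z)$ is wrong since that may be small, so more likely one shows $R(u,w) = R(u,z) \cup R(w,z)$ and iterates on a monotone-increasing nested chain of pieces. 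Since $R(u,v)$ is finite, strict increase cannot continue forever, so the iteration terminates with $R(p,z) \cup R(z,q) = R(u,v)$ for some $p,q \in R(u,v)$, establishing \AX{(o')}. I would write the termination argument exactly as in the earlier theorem ("Since $R(u,v)$ is finite, this iteration reaches \dots"), and devote the bulk of the write-up to pinning down one clean iteration step with its case distinction.
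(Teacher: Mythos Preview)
Your proposal never actually closes the gap you yourself identify. You correctly observe that the one-shot application of \AX{(i)} to $R(u,z)$, $R(z,v)$, $R(u,v)$ is vacuous, and you correctly anticipate that an iterative argument is needed. But the crucial iteration step---how exactly \AX{(i)} forces one of the two ``pieces'' $R(p_i,z)$ or $R(z,q_i)$ to strictly grow---is never pinned down; every triple you try runs into the problem that the required containment $R(\cdot,z)\cap R(\cdot,z)\subseteq R(\cdot,z)$ need not hold. This is not just a bookkeeping issue: \AX{(i)} only yields \emph{containment} conclusions, so it does not obviously manufacture larger sets, and there is no evident reason your ascending chain should move at all.

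The paper proceeds quite differently, and the difference explains why it avoids your obstacle. Rather than trying to \emph{construct} a pair $(p,q)$ that works, the paper assumes \AX{(o')} fails \emph{globally}: for some $z\in R(u,v)$, no pair $p,q\in R(u,v)$ satisfies $R(p,z)\cup R(z,q)=R(u,v)$. It then studies the family $\{R(a,z):a\in R(u,v)\}$. From the global failure one extracts three members $R(u_i,z)$, $R(u_j,z)$, $R(u_k,z)$ that overlap pairwise (a finiteness argument disposes of the case where some are nested). Now \AX{(i)} is applied in the \emph{opposite direction} to yours: if, say, $R(u_i,z)\cap R(u_j,z)\subseteq R(u_k,z)$, then \AX{(i)} forces a nesting, contradicting pairwise overlap; hence each pairwise intersection contains a point outside the third set. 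Passing to these witness points $v_1,v_2,v_3$ gives three new, strictly smaller pairwise-overlapping sets $R(v_\ell,z)$. This is a \emph{descending} chain, and finiteness eventually forces the pairwise intersections down to $\{z\}$. At that stage $\{z\}=R(x_1,z)\cap R(x_2,z)\subseteq R(x_3,z)$ and $x_3\in R(x_3,z)\setminus(R(x_1,z)\cup R(x_2,z))$, so \AX{(i)} gives $R(x_1,z)\subseteq R(x_3,z)$ or $R(x_2,z)\subseteq R(x_3,z)$, contradicting pairwise overlap.

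In short: your ascending-union strategy stalls because \AX{(i)} does not produce growth; the paper's descending-intersection strategy works because \AX{(i)} does produce containment, and that is exactly what contradicts ``pairwise overlapping''. The missing idea in your proposal is to use the \emph{universal} failure of \AX{(o')} (no pair works) to get three overlapping $R(\cdot,z)$-sets, and then run a descent rather than an ascent.
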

\begin{proof}
  Let $R$ be a monotone transit function satisfying \AX{(i)} and suppose,
  for contradiction, that $R$ violates \AX{(o')}. Then there exist
  $u,v,z\in V$ such that $z\in R(u,v)$ and $R(u,v)\nsubseteq R(u_i,z)\cup
  R(u_j,z)$ for all $u_i,u_j\in R(u,v)$.  Consider $u_i\neq u_j\neq
  z$. Then there exists $u_{k_1}\in R(u,v)$ such that $u_{k_1}\notin
  R(u_i,z)\cup R(u_j,z)$. We distinguish two cases:
  \par\noindent\textit{Case 1:} One of the three sets $R(u_i,z)$,
  $R(u_j,z)$, $R(u_{k_1},z)$ is contained in another one, i.e.,
  $R(u_i,z)\subset R(u_{k_1},z)$ or $R(u_j,z)\subset R(u_{k_1},z)$ or
  $R(u_i,z) \subseteq R(u_j,z)$ or $R(u_j,z)\subseteq R(u_i,z)$. In this
  case, we consider a new point $u_{k_2}\in R(u,v)$ that is not in any of
  the three sets. Such a point exists since $R$ violates \AX{(o')}.  If
  $R(u_i,z)\subset R(u_{k_1},z)$ holds, then consider the sets $R(u_j,z)$,
  $R(u_{k_1},z)$, and $R(u_{k_2},z)$. If at least one of the three sets is
  contained in another, we consider a new point $u_{k_3}\in R(u,v)$ which
  is not in any of the three sets. Again, such a point exists since $R$
  violates \AX{(o')}. Continuing in this manner, we obtain an infinite
  number of points $u_{k_1},u_{k_2},\dots$ because in each step, a new point
  from $R(u,v)$ is added. However, this contradicts the fact that $R(u,v)$
  is finite. After a finite number of steps, we, therefore, encounter
  \par\noindent\textit{Case 2:} $R(u_i,z)$, $R(u_j,z)$, $R(u_k,z)$ overlap
  pairwisely. Suppose the intersection of two sets is contained in the
  third, say, $R(u_i,z)\cap R(u_j,z)\subset R(u_k,z)$; then \AX{(i)}
  implies $R(u_j,z)\subset R(u_k,z)$ or $R(u_i,z)\subset R(u_k,z)$
  contradicting the assumption that the three sets overlap
  pairwisely. Therefore we have $R(u_i,z)\cap R(u_j,z)\nsubseteq R(u_k,z)$,
  $R(u_i,z)\cap R(u_k,z)\nsubseteq R(u_j,z)$, and $R(u_k,z)\cap
  R(u_j,z)\nsubseteq R(u_i,z)$. Hence there exist points $v_1,v_2,v_3\in
  R(u,v)$ such that $v_1 \in R(u_i,z)\cap R(u_j,z)$, $v_1\notin R(u_k,z)$,
  $v_2\in R(u_i,z)\cap R(u_k,z)$, $v_2\notin R(u_j,z)$, and $v_3\in
  R(u_k,z)\cap R(u_j,z)$, $v_3\notin R(u_i,z)$. Now consider the sets
  $R(v_1,z)$, $R(v_2,z)$, and $R(v_3,z)$. Since the sets $R(u_i,z)$,
  $R(u_j,z)$, and $R(u_k,z)$ overlap pairwisely, by \AX{(m)}, the sets
  $R(v_1,z)$, $R(v_2,z)$, and $R(v_3,z)$ also overlap pairwisely.  If the
  intersection of two sets is contained in the third one, then \AX{(i)}
  again implies that one of the sets is contained in another,
  contradicting the assumption that the three sets overlap pairwisely.
  Then there exist points $w_1,w_2,w_3\in R(u,v)$ such that $w_1 \in
  R(v_1,z)\cap R(v_2,z)$, $w_1\notin R(v_3,z)$, $w_2\in R(v_1,z)\cap
  R(v_3,z)$, $w_2\notin R(v_2,z)$, and $w_3\in R(v_2,z)\cap R(v_3,z)$,
  $w_3\notin R(v_1,z)$. Repeating these arguments, we eventually obtain
  points $x_1,x_2,x_3\in R(u,v)$ such that $R(x_1,z)\cap R(x_2,z)=\{z\}$,
  $R(x_2,z)\cap R(x_3,z)=\{z\}$, and $R(x_1,z)\cap R(x_3,z)=\{z\}$.  The
  three sets $R(x_1,z), R(x_2,z), R(x_3,z)$ violate \AX{(i)},
  contradicting the assumption that \AX{(i)} is satisfied.  Thus $R$ must
  satisfy \AX{(o')}.
\end{proof}

Example~\ref{o-i*} below shows that the converse need not be true:
\begin{example}\label{o-i*}
  Let $V=\{a,b,c,d\}$ and $R$ on $V$ be defined by $R(a,b)=\{a,b\}$,
  $R(b,c)=\{b,c\}$, $R(b,d)=\{b,d\}$ and all other sets are singletons or
  $V$.  $R$ is monotone, satisfies \AX{(o')} but not \AX{(i)}.
\end{example}

\section{Between paired hierarchies and pyramids}
\label{sect:l1l2}

Consider a set system $\mathscr{C}$ satisfying the following properties:
\begin{description}\setlength{\itemsep}{0pt}\setlength{\parskip}{0pt}%
\item[\AX{(L1)}] Let $A,B,C\in\mathscr{C}$. If $A\overlaps B$ and
  $B\overlaps C$, then (i) $A\subseteq C$, or (ii) $C\subseteq A$, or (iii)
  $A\cap C\subseteq B\subseteq A\cup C$.
\item[\AX{(N3O)}] Let $A,B,C\in\mathscr{C}$. If $A\overlaps B$ and
  $B\overlaps C$ then $A$ does not overlap $C$.  
\end{description}
Axiom \AX{(N3O)} appeared in recent work on the clustering systems of
so-called galled trees, a special class of level-1 phylogenetic networks
\cite{Hellmuth:22q}.  By definition, a paired hierarchy trivially satisfies
\AX{(N3O)} and \AX{(L1)} since, in this case, we must have $A=C$. Moreover,
these axioms are related as follows:
\begin{lemma}
  \label{lem:L1->WP}
  If $\mathscr{C}$ satisfies \AX{(L1)}, then $\mathscr{C}$ satisfies
  \AX{(WP)}.
\end{lemma}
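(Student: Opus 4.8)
The plan is to take three sets $A,B,C\in\mathscr{C}$ with pairwise non-empty intersections and deduce that one of them lies in the union of the other two, by applying \AX{(L1)} to a suitable pairing of the three sets. The first thing I would check is whether the pairwise intersections can be upgraded to pairwise \emph{overlaps}: if, say, $A\subseteq B$, then $A\subseteq B\cup C$ and we are immediately done; so without loss of generality no set is contained in another, and since all pairwise intersections are non-empty, this forces $A\overlaps B$, $B\overlaps C$, and $A\overlaps C$ (each pair has non-empty intersection and neither side contained in the other). Now apply \AX{(L1)} to the pair $(A,C)$ with $B$ in the "middle" role: since $A\overlaps B$ and $B\overlaps C$, we get (i) $A\subseteq C$, or (ii) $C\subseteq A$, or (iii) $A\cap C\subseteq B\subseteq A\cup C$. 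Cases (i) and (ii) contradict the reduction that no set contains another, so we are left with (iii), which in particular gives $B\subseteq A\cup C$ — exactly the conclusion of \AX{(WP)} with $B$ as the contained set.

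The only subtlety is the reduction step: I must make sure that when I assume "no containments among $A,B,C$" I have not thrown away a case in which \AX{(WP)} could fail. But that is immediate — if any of the six containments $A\subseteq B$, $B\subseteq A$, $A\subseteq C$, $C\subseteq A$, $B\subseteq C$, $C\subseteq B$ holds, then trivially one set is contained in the union of the other two (the union is a superset of each individual set), so \AX{(WP)} holds in that case. Hence it suffices to prove \AX{(WP)} under the extra hypothesis that no set is a subset of another, and under that hypothesis the three pairwise non-empty intersections become three pairwise overlaps.

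I expect no real obstacle here; the main point is simply recognizing that the "(iii)" branch of \AX{(L1)} directly encodes the \AX{(WP)} conclusion and that the "(i)"/"(ii)" branches are excluded once we have reduced to the overlap case. One presentational choice is whether to phrase the reduction up front or to absorb it into the case analysis; I would state it up front for clarity, then invoke \AX{(L1)} exactly once on the pair $(A,C)$.
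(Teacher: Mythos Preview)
Your proposal is correct and follows essentially the same approach as the paper: reduce the containment cases (which trivially satisfy \AX{(WP)}) and then apply \AX{(L1)} with $B$ in the middle role to obtain $B\subseteq A\cup C$. The only cosmetic difference is that the paper does not pre-reduce away all containments; instead it applies \AX{(L1)} as soon as $A\overlaps B$ and $B\overlaps C$ hold and observes that branches (i) and (ii) also yield the \AX{(WP)} conclusion via $A\subseteq C\subseteq B\cup C$ (resp.\ $C\subseteq A\subseteq A\cup B$), whereas you exclude those branches by your prior reduction.
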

\begin{proof}
  Suppose $A,B,C\in\mathscr{C}$ pairwisely intersect. Then either
  $A\overlaps B$ and $B\overlaps C$, or $B$ and at least one of $A$ and $C$
  are nested. In the latter case, we may assume w.l.o.g.\ $B\subseteq C$ or
  $C\subseteq B$, and thus $B\subseteq A\cup C$ or $C\subseteq A\cup B$. If
  $A\overlaps B$ and $B\overlaps C$ then \AX{(L1)} implies (i) $A\subseteq
  C\subseteq B\cup C$, or (ii) $C\subseteq A\subseteq A\cup C$, or (iii)
  $B\subseteq A\cup C$. In either case, one of $A$, $B$, $C$ is contained
  in the union of the other two sets.
\end{proof}

\begin{lemma}
  \label{lem:L1->W}
  Let $\mathscr{C}$ be a set system satisfying \AX{(L1)}. Then
  $\mathscr{C}$ is a weak hierarchy.
\end{lemma}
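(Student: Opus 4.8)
The plan is to argue by contradiction straight from the definition of a weak hierarchy (equivalently, from the absence of a $\beta$-cycle of length $3$). Suppose $\mathscr{C}$ is not a weak hierarchy, so there are $A,B,C\in\mathscr{C}$ with $A\cap B\cap C\notin\{A\cap B,\,A\cap C,\,B\cap C\}$. Since $A\cap B\cap C$ is always contained in each pairwise intersection, this says that $A\cap B\cap C$ is a \emph{proper} subset of each of $A\cap B$, $A\cap C$, and $B\cap C$; in particular all three pairwise intersections are non-empty, and there are points $x_{AB}\in(A\cap B)\setminus C$, $x_{AC}\in(A\cap C)\setminus B$, and $x_{BC}\in(B\cap C)\setminus A$.

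First I would rule out any nesting among the three sets: $A\subseteq B$ would force $A\cap B\cap C=A\cap C$ and $B\subseteq A$ would force $A\cap B\cap C=B\cap C$, both excluded by assumption, and the remaining four inclusions are handled symmetrically. Hence $A\overlaps B$, $B\overlaps C$, and $A\overlaps C$. Now apply \AX{(L1)} to this triple with $B$ in the middle: alternatives (i) $A\subseteq C$ and (ii) $C\subseteq A$ each contradict $A\overlaps C$ established above, while alternative (iii) gives $A\cap C\subseteq B$, hence $A\cap C\subseteq A\cap B\cap C$ and therefore $A\cap B\cap C=A\cap C$, contradicting the choice of $A,B,C$. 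Since every alternative is impossible, $\mathscr{C}$ must be a weak hierarchy.

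I do not expect a genuine obstacle here: \AX{(L1)} is essentially the ``three pairwise overlapping sets'' obstruction with nesting admitted as an escape clause, so once the nesting cases are dispatched the forbidden $\beta$-triangle is precisely the configuration \AX{(L1)} excludes. The only point requiring care is the bookkeeping in the no-nesting step, where one must check all six possible inclusions among $A,B,C$ rather than a single one; each, however, is a one-line consequence of $A\cap B\cap C$ collapsing onto one of the pairwise intersections.
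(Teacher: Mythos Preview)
Your proof is correct. The approach differs from the paper's: the paper gives a direct case analysis, splitting according to how many of the pairs $\{A,B\}$, $\{B,C\}$, $\{A,C\}$ overlap and verifying the weak-hierarchy identity in each case, invoking \AX{(L1)} only in the final case $A\overlaps B\overlaps C$. You instead argue by contradiction, observe that a putative counterexample forces all three pairwise intersections to be proper supersets of the triple intersection, dispose of the six nesting possibilities in one stroke (each would collapse the triple intersection onto a pairwise one), and thereby land immediately in the all-overlapping case. Your route is shorter and avoids the bookkeeping of the intermediate cases; the paper's route is more explicit about why the non-overlapping cases are harmless. Both rely on exactly the same application of \AX{(L1)} at the end. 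One cosmetic remark: the witness points $x_{AB},x_{AC},x_{BC}$ you introduce are never used thereafter---the overlap conclusions follow already from non-empty intersection plus the absence of nesting---so you could drop them without loss.
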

\begin{proof}
  Let $A,B,C\in\mathscr{C}$. First, suppose $A$, $B$, and $C$ do not
  overlap. Then either $A, B, C$ are pairwise disjoint, in which case $A\cap
  B\cap C=\emptyset =A\cap B$ or the three sets are nested. Assume,
  w.l.o.g., that $A\subseteq B\subseteq C$. Then $A\cap B\cap C= A\cap B=
  A$. Second, assume $A\overlaps B$ and $B$ does not overlap $C$. The
  following four situations may occur: (a) $B\cap C=\emptyset$ implies
  $A\cap B\cap C=\emptyset = B\cap C$. (b) $B\subseteq C$, implies $A\cap
  B\cap C= A\cap B$, and (c) $C\subseteq B$ implies $A\cap B\cap C=A\cap
  C$. In either sub-case, $A\cap B\cap C\in\{A\cap B, A\cap C, B\cap
  C\}$. By symmetry, the same is true is $A$ does not overlap $B$ and
  $B\overlaps C$.  Finally, suppose $A\overlaps B$ and $B\overlaps
  C$. According to \AX{(L1)}, three cases may occur. In case (i),
  $A\subseteq C$ implies $A\cap B\cap C= A\cap B$. In case (ii), we have
  $A\cap B\cap C= C\cap B$. In case (iii), $A\cap C\subseteq B$ implies
  $A\cap B\cap C=A\cap C$. Thus, in either case, $A\cap B\cap C\in\{A\cap B,
  A\cap C, B\cap C\}$.
\end{proof}

\begin{corollary}
  If $\mathscr{C}$ satisfies \AX{(L1)}, then it is weakly pyramidal.
\end{corollary}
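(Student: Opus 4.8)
The plan is to read this off immediately from the two preceding lemmas, using the Corollary that characterizes weakly pyramidal set systems. Recall that a set system is weakly pyramidal precisely when it is a weak hierarchy satisfying \AX{(WP)} (equivalently, by the Corollary just above, a weak hierarchy satisfying \AX{(I)}). So the only thing to establish is that \AX{(L1)} implies both halves of this conjunction, and each half is exactly one of the two lemmas already proved.

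Concretely, first I would invoke Lemma~\ref{lem:L1->W} to conclude that a set system satisfying \AX{(L1)} is a weak hierarchy. Then I would invoke Lemma~\ref{lem:L1->WP} to conclude that it also satisfies \AX{(WP)}. Putting these together, $\mathscr{C}$ is a weak hierarchy satisfying \AX{(WP)}, which is by definition a weakly pyramidal (weak pre-pyramidal) set system. No further case analysis is needed beyond what is already carried out inside Lemmas~\ref{lem:L1->W} and~\ref{lem:L1->WP} (the split into the pairwise-disjoint, nested, and genuinely-overlapping subcases, followed by the three cases of \AX{(L1)} in the overlapping subcase).

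There is essentially no mathematical obstacle here; the result is a pure corollary and the substance lives in the two lemmas. The one point that deserves a moment's care is terminological bookkeeping: one must make sure ``weakly pyramidal'' is being used in the sense of ``weak hierarchy together with \AX{(WP)}'' and not confused with ``pre-pyramidal'' or with ``pyramidal = pre-pyramidal and closed''. The Corollary preceding this statement (equivalence of \AX{(I)} and \AX{(WP)} for weak hierarchies), together with the earlier discussion of \AX{(wp)} and \AX{(i)}, pins down which notion is meant, so once the two lemmas are in hand the statement follows in a single line.
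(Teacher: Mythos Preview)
Your proposal is correct and matches the paper's approach exactly: the corollary is stated without proof in the paper precisely because it follows immediately from Lemmas~\ref{lem:L1->WP} and~\ref{lem:L1->W}, together with the definition of a weakly pyramidal set system as a weak hierarchy satisfying \AX{(WP)}. There is nothing to add.
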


\begin{lemma}
  Let $\mathscr{C}$ be a set system. Then \AX{(L1)} implies \AX{(N3O)}.
  \end{lemma}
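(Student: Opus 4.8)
The plan is to argue by contradiction. Suppose $\mathscr{C}$ satisfies \AX{(L1)} but fails \AX{(N3O)}: there are $A,B,C\in\mathscr{C}$ with $A\overlaps B$, $B\overlaps C$, and $A\overlaps C$. The guiding observation is that pairwise overlapping implies no two of the three sets are nested, so whenever \AX{(L1)} is applied to a triple drawn from $\{A,B,C\}$, the two ``nestedness'' alternatives are excluded and only the ``sandwich'' alternative (iii) can hold. One then extracts two such sandwich inclusions and plays them off against each other with a single element.

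Concretely, I would first apply \AX{(L1)} with the three sets taken in the order $A$, $C$, $B$ (legitimate since $A\overlaps C$ and $C\overlaps B$): because $A\overlaps B$ rules out $A\subseteq B$ and $B\subseteq A$, alternative (iii) is forced, giving $A\cap B\subseteq C\subseteq A\cup B$. Next I would apply \AX{(L1)} with the sets in the order $B$, $A$, $C$ (legitimate since $B\overlaps A$ and $A\overlaps C$): because $B\overlaps C$ rules out $B\subseteq C$ and $C\subseteq B$, alternative (iii) is again forced, giving $B\cap C\subseteq A\subseteq B\cup C$. From these I keep just the two facts $A\cap B\subseteq C$ and $A\subseteq B\cup C$.

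To finish, pick a witness $x\in A\setminus C$, which exists because $A\overlaps C$. Then $x\in A\subseteq B\cup C$ and $x\notin C$ force $x\in B$, hence $x\in A\cap B\subseteq C$, contradicting $x\notin C$. Therefore $A$ cannot overlap $C$, which is exactly \AX{(N3O)}. I do not expect a real obstacle here; the only point requiring care is bookkeeping the order of the three arguments in \AX{(L1)} (the ``middle'' set is the one overlapping the other two), and noting that only two of the three available instances of \AX{(L1)} are needed.
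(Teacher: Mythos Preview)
Your proof is correct. The bookkeeping on which set plays the ``middle'' role in each invocation of \AX{(L1)} is handled properly, and the final element-chase with $x\in A\setminus C$ is sound.

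The paper follows the same underlying strategy---permute the roles of $A,B,C$ in \AX{(L1)} and exploit the resulting sandwich inclusions---but takes a longer route. It first applies \AX{(L1)} to the original ordered triple $(A,B,C)$, obtaining $A\cap C\subseteq B\subseteq A\cup C$, and then runs a four-way case split on whether these inclusions are strict and whether $A\cap C$ is empty. In the only substantive case ($A\overlaps C$) the paper applies \AX{(L1)} twice more to deduce $A\cap B=A\cap C=B\cap C$, and from $B\subseteq A\cup C$ concludes $B=(A\cap B)\cup(B\cap C)=B\cap C$, contradicting $B\overlaps C$. Your argument bypasses the case analysis entirely: by choosing the two permutations $(A,C,B)$ and $(B,A,C)$ up front you extract exactly the two inclusions $A\cap B\subseteq C$ and $A\subseteq B\cup C$ needed for an immediate contradiction via a single witness $x$. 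This is shorter and arguably cleaner; the paper's version has the minor advantage of making the structure of case (iii) more visible, but at the cost of extra bookkeeping.
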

\begin{proof}
  Suppose axiom \AX{(L1)} holds, $A\overlaps B$ and $B\overlaps C$, and we
  have neither $A\subseteq C$ nor $C\subseteq A$. In case (iii), one of the
  following situations are possible: (a) $A \cap C=\emptyset$, (b) $B=A\cap
  C$, (c) $B= A\cup C$, (d) $\emptyset\ne A\cap C \subsetneq B \subsetneq
  A\cup C$. In case (b), we have $B\subseteq A$ and $B\subseteq C$, and
  thus $B$ does not overlap $A$ and $C$, a contradiction. In case (c),
  $A\subseteq B$ and $C\subseteq B$ imply that $B$ does not overlap $A$ and
  $C$, again a contradiction. In case (d), we have $A\overlaps C$.  By
  \AX{(L1)}, $A\overlaps B$ and $A\overlaps C$ imply that $B\cap C\subseteq
  A \subseteq B\cup C$, and thus $A\cap B\cap C=B\cap C$, since the other
  two options of \AX{(L1)} contradict the assumption that $B$ and $C$
  overlap.  By the same argument, $A\overlaps C$ and $B\overlaps C$ and
  \AX{(L1)} implies $A\cap B\cap C= A\cap B$. As in the proof of
  Lemma~\ref{lem:L1->W}, $A\overlaps B$ and $B\overlaps C$ implies $A\cap
  B\cap C=A\cap C$. Thus, in case (iii), we have either $A\cap C=\emptyset$
  or $A\overlaps C$, in which case $A\cap B=A\cap C=B\cap C$ must
  hold. Therefore, $B\subseteq A\cup C$ implies $B=B\cap(A\cup C) = (A\cap
  B)\cup(C\cap B) = (B\cap C)$, contradicting $B\overlaps C$. Therefore,
  $A\cap C=\emptyset$.
\end{proof}
The following Example~\ref{ex:N3O-L1} shows that the converse is not true.
\begin{example}
  \label{ex:N3O-L1}
  Let $A=\{a,b\}, B=\{b,c,d\}, C=\{d,e\}, V$ and the singletons be the sets
  in a set system $\mathscr{C}$ on $V=\{a,b,c,d,e\}$. Then, $\mathscr{C}$
  satisfies \AX{(N3O)} but violates \AX{(L1)} as $B\nsubseteqq A\cup C$.
\end{example}
We can translate \AX{(L1)} and \AX{(N3O)} in terms of transit functions as
follows:
\begin{description}\setlength{\itemsep}{0pt}\setlength{\parskip}{0pt}%
\item[\AX{(l1)}] $R(x,y)\overlaps R(p,q)\overlaps R(u,v)$ implies either
  (i) $R(x,y)\subseteq R(u,v)$ or (ii) $R(u,v)\subseteq R(x,y)$ or (iii)
  $R(x,y)\cap R(u,v) \subseteq R(p,q) \subseteq R(x,y)\cup R(u,v)$.
\item[\AX{(n3o)}] $R(x,y)\overlaps R(p,q)$ and $R(p,q)\overlaps R(u,v)$
  implies either (i) $R(x,y)\subseteq R(u,v)$ or (ii) $R(u,v)\subseteq
  R(x,y)$, or (iii) $R(x,y)\cap R(u,v)=\emptyset$.
\end{description}
\begin{corollary}
  \label{cor:l1->n3o}
  If a monotone transit function $R$ satisfies \AX{(l1)}, then $R$
  satisfies \AX{(n3o)}.
\end{corollary}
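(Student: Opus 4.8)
The plan is to observe that the two axioms in question are nothing more than the set-system axioms \AX{(L1)} and \AX{(N3O)} applied to the hypergraph $\mathscr{C}_R$ of transit sets, and then to invoke the Lemma (proved above for arbitrary set systems) that \AX{(L1)} implies \AX{(N3O)}.

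First I would record the translation explicitly. Since every transit set of $R$ has the form $R(x,y)$ for some $x,y\in V$, and conversely every such $R(x,y)$ is a member of $\mathscr{C}_R$, a statement universally quantified over triples of transit sets coincides with the corresponding statement over triples $A,B,C\in\mathscr{C}_R$. Hence $R$ satisfies \AX{(l1)} exactly when $\mathscr{C}_R$ satisfies \AX{(L1)}, with $R(p,q)$ playing the role of the ``middle'' set $B$ in the chain $R(x,y)\overlaps R(p,q)\overlaps R(u,v)$. Likewise, unfolding the definition of $\overlaps$, the assertion ``$R(x,y)$ does not overlap $R(u,v)$'' is precisely the disjunction $R(x,y)\subseteq R(u,v)$ or $R(u,v)\subseteq R(x,y)$ or $R(x,y)\cap R(u,v)=\emptyset$; thus $R$ satisfies \AX{(n3o)} exactly when $\mathscr{C}_R$ satisfies \AX{(N3O)}.

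Given this, the corollary follows at once: if $R$ satisfies \AX{(l1)}, then $\mathscr{C}_R$ satisfies \AX{(L1)}, so by the Lemma $\mathscr{C}_R$ satisfies \AX{(N3O)}, and therefore $R$ satisfies \AX{(n3o)}. There is essentially no obstacle here; the only point to check is that the proof of ``\AX{(L1)} implies \AX{(N3O)}'' uses nothing about $\mathscr{C}$ beyond the fact that the sets it feeds into \AX{(L1)} lie in $\mathscr{C}$ — which holds verbatim for $\mathscr{C}=\mathscr{C}_R$ — so monotonicity of $R$ is not actually needed and is carried along only for consistency with the rest of the section. Alternatively, one could give a self-contained argument by transcribing the proof of that Lemma line by line with $A=R(x,y)$, $B=R(p,q)$, $C=R(u,v)$, but the reduction above makes this superfluous.
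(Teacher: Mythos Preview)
Your proposal is correct and matches the paper's approach exactly: the corollary is stated without proof in the paper because it follows immediately from the preceding Lemma (\AX{(L1)} implies \AX{(N3O)} for arbitrary set systems) via the translation of \AX{(l1)}/\AX{(n3o)} to \AX{(L1)}/\AX{(N3O)} on $\mathscr{C}_R$, precisely as you spell out. Your observation that monotonicity is not actually used is also correct.
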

\begin{lemma}
  \label{lem:n3o->wpy}
  If a monotone transit function $R$ satisfies \AX{(n3o)}, then it also
  satisfies \AX{(w)} and \AX{(wp)}, i.e., $R$ is weakly pyramidal.
\end{lemma}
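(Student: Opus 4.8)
The plan is to work directly with transit sets and reduce the statement to known facts about weak hierarchies. Recall that by Lemma~\ref{lem:n3o->wpy}'s hypothesis, $R$ is a monotone transit function satisfying \AX{(n3o)}, and by Corollary~\ref{cor:l1->n3o} the axiom \AX{(n3o)} is the ``overlap-only'' relaxation of \AX{(l1)}. The key observation is that \AX{(n3o)} already forbids three pairwise-overlapping transit sets: if $R(x,y)\overlaps R(p,q)$ and $R(p,q)\overlaps R(u,v)$ then, by \AX{(n3o)}, either one of $R(x,y),R(u,v)$ is contained in the other (so they do not overlap) or $R(x,y)\cap R(u,v)=\emptyset$ (so again they do not overlap). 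Hence no three transit sets overlap pairwise.

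First I would prove \AX{(w)}, equivalently that $\mathscr{C}_R$ is a weak hierarchy, i.e.\ that for any $A,B,C\in\mathscr{C}_R$ we have $A\cap B\cap C\in\{A\cap B,A\cap C,B\cap C\}$. By the preceding paragraph, at most two of the three sets overlap. If no two of them overlap, the argument in the proof of Lemma~\ref{lem:L1->W} (the ``pairwise disjoint or nested'' case and the ``$A\overlaps B$, $B$ does not overlap $C$'' case) goes through verbatim, since it uses only nestedness/disjointness and never \AX{(l1)}. The only remaining possibility is that exactly one pair overlaps, say $A\overlaps B$, while $B$ does not overlap $C$ and $A$ does not overlap $C$; then $C$ is nested with or disjoint from each of $A$ and $B$, and a short case analysis on whether $C\subseteq A$, $A\subseteq C$, $C\subseteq B$, $B\subseteq C$, or $C$ is disjoint from one of them gives $A\cap B\cap C\in\{A\cap B,A\cap C,B\cap C\}$ in every sub-case. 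This establishes that $\mathscr{C}_R$ is a weak hierarchy, hence \AX{(w)} holds (and, as noted in the preliminaries, for monotone transit functions \AX{(w)} implies \AX{(k)} and \AX{(a')}, so $\mathscr{C}_R$ is a closed clustering system).

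Next I would prove \AX{(wp)}. By the Corollary stated just before Lemma~\ref{lem:L1->WP}, for a weak hierarchy \AX{(WP)} is equivalent to \AX{(I)}, so it suffices to verify \AX{(I)} for $\mathscr{C}_R$ (equivalently \AX{(i)} for $R$). So suppose $A,B,C\in\mathscr{C}_R$ with $\emptyset\ne A\cap B\subseteq C$ and $C\setminus(A\cup B)\ne\emptyset$; I must show $A\subseteq C$ or $B\subseteq C$. Since $A\cap B\subseteq C$, both $A\cap C$ and $B\cap C$ are non-empty, and since $C\setminus(A\cup B)\ne\emptyset$ we cannot have $C\subseteq A$ or $C\subseteq B$. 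If $A\overlaps C$ and $B\overlaps C$ then, as $A\cap B\ne\emptyset$ forces $A\overlaps B$ (unless one is contained in the other, handled separately), we would get three sets — but here the overlapping chain $A\overlaps C\overlaps B$ together with \AX{(n3o)} forces $A\subseteq B$, $B\subseteq A$, or $A\cap B=\emptyset$; the last contradicts $A\cap B\ne\emptyset$, and either of the first two, combined with the other relations, quickly yields $A\subseteq C$ or $B\subseteq C$. The remaining case is that at least one of $A,C$ or $B,C$ is nested or disjoint; disjointness is excluded since $A\cap C,B\cap C\ne\emptyset$, so we get $A\subseteq C$ or $C\subseteq A$ or $B\subseteq C$ or $C\subseteq B$, and the $C\subseteq A$, $C\subseteq B$ options are already excluded, leaving $A\subseteq C$ or $B\subseteq C$ as desired.

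The main obstacle I anticipate is the bookkeeping in the \AX{(wp)} step: one has to be careful that \AX{(n3o)} is stated for an overlapping \emph{chain} $R(x,y)\overlaps R(p,q)\overlaps R(u,v)$ and not for three pairwise-overlapping sets, so the reduction to ``no three transit sets overlap pairwise'' must be argued explicitly (it does follow, because conclusion (i) or (ii) of \AX{(n3o)} makes the outer two sets nested, hence non-overlapping, and (iii) makes them disjoint). Once that lemma is in hand, every case split above is elementary set algebra of the type already carried out in the proofs of Lemmas~\ref{lem:L1->WP} and~\ref{lem:L1->W}, so no genuinely new idea is needed — the content is entirely in extracting ``no pairwise-overlapping triple'' from \AX{(n3o)} and then re-running the weak-hierarchy and \AX{(I)} arguments without the third, overlapping case.
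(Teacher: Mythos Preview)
Your overall strategy is sound, but the case analysis for \AX{(w)} has a genuine gap. You correctly extract from \AX{(n3o)} that no three transit sets can pairwise overlap, and you then claim ``the only remaining possibility is that exactly one pair overlaps.'' That is false: the case $A\overlaps B$ and $B\overlaps C$ (with $A,C$ non-overlapping, as forced by \AX{(n3o)}) has \emph{two} overlapping pairs and is precisely the ``Finally'' case of Lemma~\ref{lem:L1->W}, the one that actually invokes \AX{(L1)} and which you cannot borrow verbatim. The fix is immediate once noticed --- \AX{(n3o)} gives $A\subseteq C$, $C\subseteq A$, or $A\cap C=\emptyset$, and in each sub-case $A\cap B\cap C$ equals $A\cap B$, $B\cap C$, or $A\cap C$ respectively --- but as written the argument is incomplete. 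Your \AX{(wp)} route through \AX{(I)} and the equivalence \AX{(I)}$\Leftrightarrow$\AX{(WP)} for weak hierarchies is correct.

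The paper's proof is much shorter because it works directly with the transit-function formulations rather than the set-system ones. For \AX{(w)}: if it fails, there are $x,y,z$ with $x\notin R(y,z)$, $y\notin R(x,z)$, $z\notin R(x,y)$, so these three transit sets overlap pairwise, contradicting \AX{(n3o)}. For \AX{(wp)}: if three transit sets have pairwise non-empty intersections, either some pair is nested (and \AX{(wp)} holds trivially) or all three overlap pairwise (contradicting \AX{(n3o)}). Your detour through the full weak-hierarchy identity and through \AX{(I)} reproduces conclusions that follow in two lines from the ``no pairwise-overlapping triple'' observation you already made; the extra case-splitting buys nothing and is where the bookkeeping error crept in.
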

\begin{proof}
  Suppose $R$ does not satisfy \AX{(w)}. Then there exist points
  $x,y,z\in V$ such that $x\notin R(y,z)$, $y\notin R(x,z)$, and $z\notin
  R(x,y)$, and thus the three transit sets overlap pairwisely, i.e., we
  have $R(x,y)\overlaps R(y,z)$, $ R(y,z)\overlaps R(z,x)$, and
  $R(z,x)\overlaps R(x,y)$, violating \AX{(n3o)}.
  
  Now suppose $R$ satisfies \AX{(n3o)} and consider three transit sets
  $A,B,C\in \mathscr{C}_R$ with pairwise nonempty intersections. If at
  least one set is contained in another set, then $R$ satisfies
  \AX{(wp)}. Otherwise, $A\overlaps B$, $B\overlaps C$, and $A\overlaps C$ 
  contradict \AX{(n3o)}.
\end{proof}
Nevertheless, \AX{(n3o)} is not sufficient to guarantee that $R$ is
pyramidal, which is clear from Fig.~\ref{fig:cex1}A. Moreover, Example
\ref{py-n3o} shows that the converse of Lemma~\ref{lem:n3o->wpy} is also
not true.
\begin{example}\label{py-n3o}
  Let $R$ be defined on $V=\{a,b,c,d,e\}$ by $R(a,b)=\{a,b,c\}=R(a,c)$,
  $R(a,d)=R(a,e)=R(b,e)=V$, $R(b,c)=\{b,c\}$, $R(b,d)=\{b,c,d\}$,
  $R(c,d)=\{c,d\}$, and $R(c,e)=R(d,e)=\{c,d,e\}$. The transit function $R$
  is pyramidal but does not satisfy \AX{(n3o)}.
\end{example}
A monotone transit function $R$ satisfying \AX{(l1)} satisfies \AX{(w)}
and, therefore, also \AX{(k2)}. Consequently, we have
\begin{fact}
  If the monotone transit function $R$ satisfies \AX{(l1)}, then
  $\mathscr{C}_R$ is closed.
\end{fact}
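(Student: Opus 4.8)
The plan is to deduce this from the implications already established in this section rather than argue from scratch. First I would invoke Corollary~\ref{cor:l1->n3o} to pass from \AX{(l1)} to \AX{(n3o)}, and then Lemma~\ref{lem:n3o->wpy} to conclude that $R$ satisfies \AX{(w)}. (The lemma also yields \AX{(wp)}, but that is not needed here.) So the whole content of the statement reduces to: a monotone transit function satisfying \AX{(w)} has a closed set of transit sets.

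For that step I would use the facts recalled in Section~\ref{sect:notation}. For a monotone transit function, \AX{(w)} implies \AX{(a')} and, by \cite[Lemma~4.2]{Changat:19a}, also \AX{(k)}. Since \AX{(k)} is exactly the transit-function translation of \AX{(K2)} (as noted in the paragraph on convexities), it follows that $\mathscr{C}_R$ satisfies \AX{(K2)}, i.e.\ that $\mathscr{C}_R$ is closed under non-empty intersection, which is what is claimed. An equivalent phrasing of the same argument: \AX{(w)} together with \AX{(a')} makes $\mathscr{C}_R$ a weak hierarchy, and the family of transit sets of the canonical transit function of a weak hierarchy is a convexity, hence closed \cite{Changat:19a}.

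I do not expect a genuine obstacle here; the statement is a corollary of Corollary~\ref{cor:l1->n3o}, Lemma~\ref{lem:n3o->wpy}, and the cited equivalence \AX{(k)}$\iff$\AX{(K2)}. The only point that needs a moment's care is that \AX{(l1)} and \AX{(n3o)} are phrased with overlap hypotheses on the transit sets, so one should check that the hypothesis of Corollary~\ref{cor:l1->n3o} is met verbatim (it is, since we assume \AX{(l1)} outright) and that \AX{(w)} is the only consequence of \AX{(n3o)} actually used in the final reduction.
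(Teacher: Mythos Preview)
Your proposal is correct and matches the paper's own justification essentially verbatim: the paper simply notes that a monotone transit function satisfying \AX{(l1)} satisfies \AX{(w)} (which is exactly what you obtain via Corollary~\ref{cor:l1->n3o} and Lemma~\ref{lem:n3o->wpy}), and hence \AX{(k)}, so that $\mathscr{C}_R$ satisfies \AX{(K2)}. Your write-up merely makes explicit the intermediate step through \AX{(n3o)} that the paper leaves implicit.
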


Let us now consider the following axiom, which is related to, but weaker
than, the union-closure condition \AX{(uc)}:
\begin{description}\setlength{\itemsep}{0pt}\setlength{\parskip}{0pt}%
\item[{\AX{(l2)}}] If $R(x,y)\overlaps R(p,q)\overlaps R(u,v)$ with
  $R(x,y)\cap R(u,v)= \emptyset$, then $R(x,y)\cup R(p,q)\cup R(u,v)=
  R(s,t)$ for some $s\in R(x,y)\setminus R(p,q)$ and $t\in R(u,v)\setminus
  R(p,q)$.
\end{description}

If $R$ is a monotone transit function such that $\mathscr{C}_R$ is a paired
hierarchy, then $R$ satisfies \AX{(l2)}. In the language of set
systems, \AX{(l2)} clearly implies the following property:
\begin{description}\setlength{\itemsep}{0pt}\setlength{\parskip}{0pt}%
\item[{\AX{(L2')}}] Let $A,B,C\in\mathscr{C}$, $A\overlaps B$, $B\overlaps
  C$, and $A\cap C=\emptyset$. Then $A\cup B\cup C\in\mathscr{C}$.
\end{description}
However, the following Example~\ref{ex:w+wp+l1-l2} shows that \AX{(L2')} is not the
correct ``translation'' of \AX{(l2)}:
\begin{example}
  \label{ex:w+wp+l1-l2}
  $R$ on $V=\{a,b,c,d,e,f\}$ defined by $R(a,b)=\{a,b\}$, $R(a,c)=\{a,c\}$,
  $R(a,d)=R(a,e)=R(b,e)=R(c,e)=\{a,b,c,d,e\}$, $R(a,f)=R(c,f)=\{a,c,f\}$,
  $R(b,c)=R(b,d)=R(c,d)=\{b,c,d\}$, $R(b,f)=V$, $R(d,f)=R(e,f)=\{d,e,f\}$,
  $R(d,e)=\{d,e\}$ is monotone. $\mathscr{C}_R$ satisfies \AX{(L2')}. But
  $R$ violates \AX{(l2)} as $R(a,b)\overlaps R(b,d)\overlaps R(d,f)$ but
  $R(a,e)\subsetneq R(a,b)\cup R(b,d)\cup R(d,f)$ and $R(a,f)\subsetneq
  R(a,b)\cup R(b,d)\cup R(d,f)$.
\end{example}
Condition \AX{(l2)} on transit functions is more restrictive than
\AX{(L2')} since, in addition to the behavior of the transit sets, it also
requires the existence of two ``reference points'' $s$ and $t$ that satisfy
an additional condition. An axiom for $R$ corresponding to \AX{(L2')}
would require only the existence of $s$ and $t$ such that $R(x,y)\cup
R(p,q)\cup R(u,v)= R(s,t)$.  A related property for general set systems is :
\begin{description}\setlength{\itemsep}{0pt}\setlength{\parskip}{0pt}%
\item[\AX{(L2'')}] Let $A,B,C,D\in\mathscr{C}$, $A\overlaps B\overlaps C$,
  $A\cap C=\emptyset$ and $(A\cup C)\setminus B\subseteq D$.  Then $A\cup
  B\cup C\subseteq D$.
\end{description}

\begin{lemma}
  If $R$ is a monotone transit function satisfying \AX{(w)} and
  \AX{(l2)}, then $\mathscr{C}_R$ is a weak hierarchy satisfying
  \AX{(L2')} and \AX{(L2'')}.
\end{lemma}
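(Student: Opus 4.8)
The claim bundles three conclusions: (1) $\mathscr{C}_R$ is a weak hierarchy; (2) it satisfies \AX{(L2')}; (3) it satisfies \AX{(L2'')}. The first is immediate: by hypothesis $R$ satisfies \AX{(w)}, and as recorded in the preliminaries, a monotone transit function satisfying \AX{(w)} has $\mathscr{C}_R$ equal to a weak hierarchy (indeed a convexity). So I would dispose of (1) in one sentence. For (2) I would argue that \AX{(L2')} is the ``set-system shadow'' of \AX{(l2)}: given $A,B,C\in\mathscr{C}_R$ with $A\overlaps B$, $B\overlaps C$ and $A\cap C=\emptyset$, write $A=R(x,y)$, $B=R(p,q)$, $C=R(u,v)$ (possible since every member of $\mathscr{C}_R$ is a transit set), apply \AX{(l2)} to obtain $s,t$ with $R(s,t)=A\cup B\cup C$, and conclude $A\cup B\cup C\in\mathscr{C}_R$. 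This is essentially the remark already made in the text that \AX{(l2)} ``clearly implies'' \AX{(L2')}, so it needs no new idea.

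The substance is (3): \AX{(L2'')}. I would take $A,B,C,D\in\mathscr{C}_R$ with $A\overlaps B\overlaps C$, $A\cap C=\emptyset$, and $(A\cup C)\setminus B\subseteq D$, and try to show $A\cup B\cup C\subseteq D$. First apply \AX{(l2)} to get points $s\in A\setminus B$ and $t\in C\setminus B$ with $R(s,t)=A\cup B\cup C$. Since $s\in A\setminus B\subseteq (A\cup C)\setminus B\subseteq D$ and likewise $t\in C\setminus B\subseteq D$, both reference points $s,t$ lie in $D$. Now monotonicity of $R$ kicks in: $s,t\in D$ and $D\in\mathscr{C}_R$ means $D=R(u',v')$ for some $u',v'$, and since $\mathscr{C}_R$ satisfies \AX{(KR)}/is a $\mathscr{T}$-system, $R(s,t)\subseteq D$ by the monotonicity axiom \AX{(m)} applied in the form $p,q\in R(u',v') \Rightarrow R(p,q)\subseteq R(u',v')$. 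Hence $A\cup B\cup C = R(s,t)\subseteq D$, which is exactly \AX{(L2'')}.

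The only thing I need to be careful about is that \AX{(l2)} genuinely delivers reference points lying \emph{outside} $B$ — which it does by construction, as $s\in R(x,y)\setminus R(p,q)$ and $t\in R(u,v)\setminus R(p,q)$ — and this outside-$B$ property is precisely what lets me place $s,t$ in $D$ via the hypothesis $(A\cup C)\setminus B\subseteq D$. This is exactly the feature that distinguishes \AX{(l2)} from the weaker \AX{(L2')} (cf.\ Example~\ref{ex:w+wp+l1-l2}), so the proof is really an illustration of why the stronger axiom was formulated. I do not anticipate a serious obstacle; the one place to double-check is that every set appearing is indeed a transit set so that monotonicity is available, but this is guaranteed because $\mathscr{C}_R$ by definition consists of transit sets and $R$ is monotone. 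The role of \AX{(w)} in the \AX{(L2'')} part is only indirect — it is needed to know $V\in\mathscr{C}_R$ and that the system is well-behaved as a weak hierarchy — but the core implication $\AX{(l2)}\Rightarrow\AX{(L2'')}$ uses just monotonicity.
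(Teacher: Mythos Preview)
Your proposal is correct and follows essentially the same route as the paper: dispose of the weak-hierarchy and \AX{(L2')} parts as immediate, then for \AX{(L2'')} invoke \AX{(l2)} to obtain reference points $s\in A\setminus B$ and $t\in C\setminus B$, observe that both lie in $(A\cup C)\setminus B\subseteq D$, and conclude $A\cup B\cup C=R(s,t)\subseteq D$ by monotonicity. Your commentary on why the ``outside-$B$'' location of $s,t$ is the crux (and what distinguishes \AX{(l2)} from \AX{(L2')}) is apt and matches the paper's intent.
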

\begin{proof}
  Suppose $R$ satisfies \AX{(w)} and \AX{(l2)}. Then $\mathscr{C}_R$ is
  obviously a weak hierarchy satisfying \AX{(L2')}. Set $A=R(x,y)$,
  $B=R(p,q)$, and $C=R(u,v)$, i.e., $A,B,C\in\mathscr{C}_R$ and assume
  $A\overlaps B\overlaps C$ and $A\cap C=\emptyset$. Then by \AX{(l2)}, we
  have $A\cup B\cup C\in \mathscr{C}_R$. Let $D=R(s',t')\in\mathscr{C}_R$
  such that $(A\cup C)\setminus B=(R(x,y)\setminus R(p,q))\cup
  (R(u,v)\setminus R(p,q))\subseteq D$. Then by \AX{(l2)}, there exist $s\in
  R(x,y)\setminus R(p,q)$ and $t\in R(x,y)\setminus R(p,q)$ such that
  $R(s,t)=A\cup B\cup C$, and by monotonicity, $R(s,t)\subseteq D$,
  i.e., $A\cup B\cup C\subseteq D$, and thus \AX{(L2'')} is satisfied.
\end{proof}

\begin{figure}[t]
  \begin{center}
    \includegraphics[width=0.9\textwidth]{./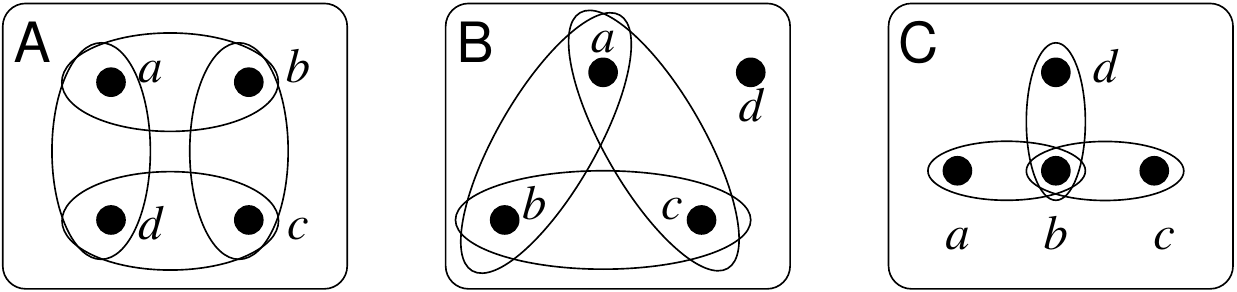}
  \end{center}
  \caption{Three set systems with corresponding canonical transit functions
    that serve as counter-examples, ruling out potential implications
    between axioms in this contribution: (\textsf{A}) The set system
    $\mathscr{C}$ on $V$ comprising the singletons, $V$, and the four edges
    $C_1=\{a,b\}$, $C_2=\{b,c\}$, $C_3=\{c,d\}$ and $C_4=\{d,a\}$ is a weak
    hierarchy (since any triple of sets that intersect pairwise contains
    either $V$ or a singleton), and satisfies axiom \AX{(WP)}. Since the
    four edges form a 4-cycle $(C_1,C_2,C_3,C_4)$ in $(V,\mathscr{C})$,
    there is no linear ordering on $V$ compatible with $\mathscr{C}$. Thus
    $(V,\mathscr{C})$ is weakly (pre-)pyramidal but not (pre-)pyramidal.
    (\textsf{B}) The set system $\mathscr{C}$ comprises the singletons,
    $V$, and the three pairs $\{p,q\}$ with $p,q\in\{a,b,c\}$. These three
    pairs intersect pairwisely, but $\{a,b\}\cap\{a,c\}\cap
    \{b,c\}=\emptyset$; thus, $\mathscr{C}$ is not a weak hierarchy.  The
    canonical transit function satisfies $R(p,q)=\{p,q\}$ for
    $p,q\in\{a,b,c\}$ and $R(p,d)=V$ for $p\in\{a,b,c\}$. (\textsf{C}) The
    set system $\mathscr{C}$ consists of the singletons, the three edges
    $\{a,b\}$, $\{b,c\}$, $\{b,d\}$, and $V$. It is a weak hierarchy. Its
    canonical transit function $R$ satisfies $R(p,q)=V$ if and only $p\ne
    q$ and $p,q\in\{a,c,d\}$.}
  \label{fig:cex1}
\end{figure}

In the following Example~\ref{uc+l2-l1}, we see the independence of axioms
\AX{(l1)} and \AX{(l2)} and their connection with the axioms \AX{(w)},
\AX{(wp)}, and \AX{(uc)}:
\begin{example}\label{uc+l2-l1}
  Let $R$ be a monotone transit function on $V=\{a,b,c,d,e,f\}$ given by
  $R(a,b)=\{a,b,c\}$, $R(a,c)=\{a,b,c\}$, $R(a,d)=\{a,b,c,d,e\} = R(a,e)$,
  $R(b,c)=\{a,b,c\}$, $R(b,d)= R(b,e) = \{b,c,d,e\}$,
  $R(b,f)=\{b,c,d,e,f\}$, $R(c,d)= R(c,e) =\{c,e,d\}$, $R(c,f) =
  \{c,f,d,e\}$, $R(d,e)=\{c,d,e\}$, $R(d,f)=\{c,d,e,f\}$, $R(e,f)=\{e,f\}$,
  and $R(a,f) = V$.  Here $R$ satisfies \AX{(l2)}, but violates \AX{(l1)}
  since $R(a,b)=\{a,b,c\}$, $R(c,d)=\{c,e,d\}$, and $R(e,f)=\{e,f\}$.  The
  corresponding transit set $\mathscr{C}_R$ satisfies \AX{(UC)}.
\end{example}
\begin{example}\label{l1+l2-uc}
  Let $R$ be a monotone transit function on $V=\{a,b,c,d,e\}$ given by
  $R(a,b)=\{a,b,c,d\}$, $R(a,c)=\{a,c\}$, $R(a,d)=\{a,b,c,d\}$,
  $R(b,c)=\{b,c\}$, $R(b,d)=\{b,d\}$, $R(b,e)=\{b,e,c,d\}$,
  $R(c,d)=\{c,b,d\}$, $R(c,e)= R(d,e)= \{c,b,d,e\}$ and all other
  non-singleton sets are $V$. Then $R$ satisfies \AX{(l1)} and \AX{(l2)}
  and $\mathscr{C}_R$ is pyramidal. However, $R$ does not satisfy \AX{(uc)}
  because $R(a,c)\cup R(b,c)\notin \mathscr{C}_R$.
\end{example}    
\begin{example}
  \label{ex:l2-w-wp}
  Let $R$ on $V=\{a,b,c,d,e\}$ be defined by $R(a,b)=\{a,b\}$,
  $R(a,c)=\{a,c\}$, $R(a,d)=V$, $R(b,c)=\{a,b,c\}$,
  $R(b,d)=R(c,d)=R(d,e)=V$, $R(a,e)=\{a,e\}$, $R(b,e)=\{a,b,e\}$, and
  $R(c,e)=\{a,c,e\}$. Here $R$ satisfies \AX{(l2)} but violates both
  \AX{(w)} and \AX{(wp)}.
\end{example}
\begin{example}
  \label{ex:py+l1+u3-l2-ph}
  Let $R$ on $V=\{a,b,c,d,e\}$ be defined by $R(a,b)=\{a,b\}$,
  $R(b,c)=\{b,c\}$, $R(c,d)=\{c,d\}$, and other non-singleton transit sets
  equal $V$. Once checks that $R$ is pyramidal but violates \AX{(l2)}.
  Also, $R$ satisfies \AX{(l1)}, and $\mathscr{C}_R$ is not a paired
  hierarchy.
\end{example}
\begin{example}
  \label{ex:py+l2-l1-ph}
  Let $R$ on $V=\{a,b,c,d,e\}$ be defined by $R(a,b)=\{a,b\}$,
  $R(b,c)=\{b,c,d\}=R(c,d)=R(b,d)$, $R(d,e)=\{d,e\}$, and all other
  non-singleton transit sets equal $V$.  Here $R$ is pyramidal and
  satisfies \AX{(l2)} but violates \AX{(l1)}.  Moreover, $\mathscr{C}_R$ is
  not a paired hierarchy.
\end{example}
\begin{example}
  \label{Ex:py-l1-l2}
  Let $R$ on $V=\{a,b,c,d,e,f\}$ be defined by $R(a,b)=\{a,b\}$,
  $R(b,c)=\{b,c,d\}= R(c,d)= R(b,d)$, $R(d,e)=\{d,e\}$ and all other
  non-singleton transit sets equal $V$. Here $R$ is pyramidal but violates
  both \AX{(l1)} and \AX{(l2)}.
\end{example}
\begin{example}
  \label{Ex:L2+wp-l1-w}
  Let $R$ on $X=\{a,b,c,d\}$ be defined by $R(a,b)=\{a,b\}$,
  $R(b,c)=\{b,c\}$, $R(a,c)=\{a,c\}$, and other sets are singletons and
  $X$. Here $R$ satisfies \AX{(l2)} and \AX{(wp)}, violates \AX{(l1)} and
  \AX{(w)}.
\end{example}
From Examples~\ref{uc+l2-l1} and \ref{ex:py+l1+u3-l2-ph}, we see that
\AX{(l1)} and \AX{(l2)} are independent.  Examples~\ref{uc+l2-l1} and
\ref{l1+l2-uc} show that \AX{(uc)} and \AX{(l1)} are independent.
Furthermore, Examples~\ref{ex:l2-w-wp} and \ref{ex:py+l1+u3-l2-ph} show
that \AX{(l2)} is independent of both \AX{(w)} and \AX{(wp)}.  The transit
function $R$ in Example~\ref{ex:w+wp+l1-l2}, furthermore, satisfies
\AX{(w)}, \AX{(wp)}, and \AX{(l1)} but violates \AX{(l2)}, and its transit set
$\mathscr{C}_R$ is not pyramidal.

Axiom \AX{(l2)} can be seen as a relaxation of the union-closure property.
Indeed, we have
\begin{lemma}
  Let $R$ be a monotone transit function satisfying \AX{(uc)}, then $R$
  satisfies \AX{(l2)}.
\end{lemma}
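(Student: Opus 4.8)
The plan is to show that the union-closure property \AX{(uc)} supplies exactly the set equality and the two reference points demanded by \AX{(l2)}. Assume $R(x,y)\overlaps R(p,q)\overlaps R(u,v)$ with $R(x,y)\cap R(u,v)=\emptyset$. By Theorem~\ref{thm:uc<=>u,w}, $R$ also satisfies \AX{(w)} and \AX{(u)}, so $\mathscr{C}_R$ is a weak hierarchy; in particular the three pairwisely-related sets cannot form a $\beta$-cycle of length $3$, which (together with $R(x,y)\cap R(u,v)=\emptyset$) forces $\emptyset\ne R(x,y)\cap R(p,q)\subseteq R(u,v)\cup R(p,q)$ etc., but the key point is simpler: since $R(x,y)\cap R(p,q)\ne\emptyset$, \AX{(uc)} gives $R(x,y)\cup R(p,q)=R(a,b)\in\mathscr{C}_R$ for suitable $a,b$. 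Since $R(x,y)\overlaps R(p,q)$, the set $R(x,y)\cup R(p,q)$ meets $R(u,v)$ (because $R(p,q)$ does), so \AX{(uc)} applies once more to yield $R(x,y)\cup R(p,q)\cup R(u,v)=\big(R(x,y)\cup R(p,q)\big)\cup R(u,v)\in\mathscr{C}_R$, say equal to $R(s',t')$.

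The remaining work is to produce the two \emph{reference points} $s\in R(x,y)\setminus R(p,q)$ and $t\in R(u,v)\setminus R(p,q)$ with $R(s,t)=R(x,y)\cup R(p,q)\cup R(u,v)$. Such candidate points exist as elements: $R(x,y)\overlaps R(p,q)$ gives some $s\in R(x,y)\setminus R(p,q)$, and $R(p,q)\overlaps R(u,v)$ gives some $t\in R(u,v)\setminus R(p,q)$. By monotonicity $R(s,t)\subseteq R(s',t')=R(x,y)\cup R(p,q)\cup R(u,v)$, so it remains to prove the reverse inclusion. Here I would lean on \AX{(u)}: pick $c\in R(x,y)\cap R(p,q)$ and $d\in R(p,q)\cap R(u,v)$ (nonempty by the overlap hypotheses). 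Then $s,c\in R(x,y)$ gives $R(s,c)\subseteq R(x,y)$, and $c,d\in R(p,q)$ gives $R(c,d)\subseteq R(p,q)$, and $d,t\in R(u,v)$ gives $R(d,t)\subseteq R(u,v)$; the goal is to run these inclusions the other way, i.e.\ to show $R(x,y)\subseteq R(s,t)$, $R(p,q)\subseteq R(s,t)$, $R(u,v)\subseteq R(s,t)$. For the first, note $s\in R(x,y)$ and I want a second point of $R(x,y)$ inside $R(s,t)$ so that monotonicity closes the argument; a clean route is to apply \AX{(u)} to the chain $s\to c\to d\to t$ inside $R(s,t)$, but that requires knowing $c,d\in R(s,t)$.

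The main obstacle, therefore, is exactly this: getting the ``internal'' points $c$ and $d$ into $R(s,t)$ to bootstrap monotonicity. I expect the cleanest fix is to invoke \AX{(w)} in the form \AX{(w$_3$)} of \cite[Thm.~1]{Changat:21w} used in the proof of Theorem~\ref{thm:uc<=>u,w}: since $c\in R(x,y)\cap R(s',t')$ and $s\in R(x,y)$, $t\in R(u,v)$, one argues $c\in R(s,t)$, and likewise $d\in R(s,t)$; then \AX{(u)} gives $R(s,t)=R(s,c)\cup R(c,t)$ and iterating, $R(s,t)\supseteq R(s,c)\cup R(c,d)\cup R(d,t)$. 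This still only yields subsets of $R(x,y)$, $R(p,q)$, $R(u,v)$ respectively, not the whole sets, so a further step is needed: one repeats the union-closure bootstrapping argument of Theorem~\ref{thm:uc<=>u,w} — starting from $R(s,t)\subsetneq R(x,y)\cup R(p,q)\cup R(u,v)$ one extracts a missing point $d_1$, moves to a strictly larger transit set between two reference points, and iterates; finiteness of $R(x,y)\cup R(p,q)\cup R(u,v)$ terminates the process at a pair $(s,t)$ of the required form with $R(s,t)=R(x,y)\cup R(p,q)\cup R(u,v)$. In short, the proof is: (1) \AX{(uc)} twice gives the union as a transit set; (2) the iterative ``grow the reference interval'' technique of Theorem~\ref{thm:uc<=>u,w}, now applied to the triple union rather than a pairwise union, pins down reference points $s\in R(x,y)\setminus R(p,q)$ and $t\in R(u,v)\setminus R(p,q)$; hence \AX{(l2)} holds.
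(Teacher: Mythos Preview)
Your first step---two applications of \AX{(uc)} to see that the triple union is a transit set---is fine and matches the paper. The gap is in locating the reference points. You pick an \emph{arbitrary} $s\in R(x,y)\setminus R(p,q)$ and $t\in R(u,v)\setminus R(p,q)$ and then try to force $R(s,t)$ to equal the whole union; you correctly notice this need not hold, and you fall back on an iteration ``in the spirit of Theorem~\ref{thm:uc<=>u,w}''. But that iteration replaces one endpoint by a missing point $d_1$, and nothing you have written guarantees that the new endpoint remains in $R(x,y)\setminus R(p,q)$ (resp.\ $R(u,v)\setminus R(p,q)$): $d_1$ may well lie in $R(p,q)$. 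So the termination of your process at a pair ``of the required form'' is asserted, not proved. This is a genuine hole, not just missing detail.

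The paper avoids all of this by choosing the third application of \AX{(uc)} more cleverly. Instead of combining $R(x,y)\cup R(p,q)$ with $R(u,v)$, it first forms the two pairwise unions $R(s_1,t_1)=R(x,y)\cup R(p,q)$ and $R(s_2,t_2)=R(p,q)\cup R(u,v)$, observes that they overlap, and applies \AX{(uc)} to \emph{them}. The point is that, because $R(x,y)\cap R(u,v)=\emptyset$, one computes
\[
R(s_1,t_1)\setminus R(s_2,t_2)=\bigl(R(x,y)\cup R(p,q)\bigr)\setminus\bigl(R(p,q)\cup R(u,v)\bigr)=R(x,y)\setminus R(p,q),
\]
and symmetrically for the other difference. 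Since for overlapping sets the reference points supplied by \AX{(uc)} must (by monotonicity) lie one in each set-difference, the resulting $s$ and $t$ land automatically in $R(x,y)\setminus R(p,q)$ and $R(u,v)\setminus R(p,q)$. No iteration, no appeal to \AX{(u)} or \AX{(w$_3$)} is needed.
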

\begin{proof}
  Let $R(x,y)\overlaps R(p,q)\overlaps R(u,v)$ with $R(x,y)\cap R(u,v)=
  \emptyset$. Then by \AX{(uc)}, $R(x,y)\cup R(p,q)= R(s_1,t_1)$ for some
  $s_1\in R(x,y)\setminus R(p,q)$ and $t_1\in R(p,q)\setminus R(x,y)$ and
  $R(p,q)\cup R(u,v)= R(s_2,t_2)$ for some $s_2\in R(p,q)\setminus R(u,v)$
  and $t_2\in R(u,v)\setminus R(p,q)$. Since $R(s_1,t_1)\overlaps
  R(s_2,t_2)$ we obtain $R(s_1,t_1)\cup R(s_2,t_2)= R(s,t)$ for some $s\in
  R(s_1,t_1)\setminus R(s_2,t_2)$ and $t\in R(s_2,t_2)\setminus
  R(s_1,t_1)$.  That is, $R(x,y)\cup R(p,q)\cup R(u,v)= R(s,t)$ for some
  $s\in R(x,y)\setminus R(p,q)$ and $t\in R(u,v)\setminus R(p,q)$. Hence
  $R$ satisfies \AX{(l2)}.
\end{proof}

\begin{lemma}
  \label{lem:l2-no-hypercycle}
  If $R$ is a monotone transit function satisfying \AX{(l1)} and \AX{(l2)},
  then $\mathscr{C}_R$ is totally balanced ($\beta$-acyclic).
\end{lemma}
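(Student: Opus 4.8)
The plan is to prove the contrapositive: assume $\mathscr{C}_R$ contains a weak $\beta$-cycle and derive a contradiction with the conjunction of \AX{(l1)} and \AX{(l2)}. Recall that a weak $\beta$-cycle of length $n\ge 3$ consists of transit sets $C_1,\dots,C_n$ and vertices $x_1,\dots,x_n$ with $x_i\in C_i\cap C_{i+1}$ and $x_i\notin C_k$ for $k\notin\{i,i+1\}$ (indices mod $n$). The first observation is that consecutive sets $C_i$ and $C_{i+1}$ must actually overlap: $x_i\in C_i\cap C_{i+1}$ while $x_{i-1}\in C_i\setminus C_{i+1}$ and $x_{i+1}\in C_{i+1}\setminus C_i$, so $C_i\overlaps C_{i+1}$. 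Moreover, for any two non-consecutive indices $i,j$, the vertices $x_{i-1},x_i$ lie in $C_i$ but not in $C_j$, so $C_i\not\subseteq C_j$; in particular neither of two non-consecutive cycle sets is contained in the other.

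**Next I would** exploit \AX{(l1)} to control the three consecutive sets $C_{i-1},C_i,C_{i+1}$. Since $C_{i-1}\overlaps C_i\overlaps C_{i+1}$ and (for $n\ge 4$) $C_{i-1}$ and $C_{i+1}$ are non-consecutive, neither containment (i) nor (ii) of \AX{(l1)} can hold, so we are forced into case (iii): $C_{i-1}\cap C_{i+1}\subseteq C_i\subseteq C_{i-1}\cup C_{i+1}$. The case $n=3$ is handled separately and quickly: a $\beta$-cycle of length $3$ means $\mathscr{C}_R$ is not a weak hierarchy, but by Lemma~\ref{lem:L1->W} (in its transit-function guise, via \AX{(l1)}$\Rightarrow$\AX{(w)} noted in the \textbf{Fact} following Lemma~\ref{lem:n3o->wpy}) \AX{(l1)} forces $\mathscr{C}_R$ to be a weak hierarchy, a contradiction. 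So assume $n\ge 4$ from now on. The containment $C_i\subseteq C_{i-1}\cup C_{i+1}$ for every $i$ is the structural key: it says each cycle set is "sandwiched" by its neighbours.

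**Then the heart of the argument** is to combine this sandwiching with \AX{(l2)}. Consider three consecutive sets with $C_{i-1}\cap C_{i+1}=\emptyset$ — I first need to argue such a configuration arises (or reduce to it): if $n\ge 5$, then $C_{i-1}$ and $C_{i+1}$ are non-consecutive and the only $x_j$ that could lie in both would need $j\in\{i-1,i\}$ and $j\in\{i,i+1\}$, forcing $j=i$, but $x_i\notin C_{i-1}$; so $C_{i-1}\cap C_{i+1}$ contains none of the cycle vertices, and with a little more work (using that the $C_{i-1}\cap C_{i+1}\subseteq C_i$ sandwich would otherwise propagate an inclusion around the cycle, contradicting the non-inclusion of non-consecutive sets) one shows $C_{i-1}\cap C_{i+1}=\emptyset$. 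With $C_{i-1}\overlaps C_i\overlaps C_{i+1}$ and $C_{i-1}\cap C_{i+1}=\emptyset$, axiom \AX{(l2)} gives $C_{i-1}\cup C_i\cup C_{i+1}=R(s,t)\in\mathscr{C}_R$ for some $s\in C_{i-1}\setminus C_i$ and $t\in C_{i+1}\setminus C_i$. Now I would use this new large set $D_i:=C_{i-1}\cup C_i\cup C_{i+1}$ together with \AX{(l1)} applied to $D_{i-1}$ (or to $C_{i-2}$) and $D_{i+1}$ to shrink the cycle: the new sets $D_i$ still overlap consecutively (since $D_i\supseteq C_i$ and $D_{i+1}\supseteq C_{i+1}$ share $x_i$, and $x_{i-2}$ witnesses $D_{i-1}\setminus$ something, etc.), and they inherit a shorter $\beta$-cycle structure on fewer vertices, or else one of them engulfs enough of the cycle to contradict the non-inclusion of non-consecutive $C_j$'s. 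Iterating, the cycle length strictly decreases until we reach length $3$, which was already excluded.

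**The main obstacle** I anticipate is the bookkeeping in the inductive shrinking step: verifying that after replacing a triple $C_{i-1},C_i,C_{i+1}$ by their union $D_i$ one genuinely obtains a shorter weak $\beta$-cycle (the "private vertex" conditions $x_j\notin C_k$ must be re-established for the contracted cycle, and one must check $D_i$ does not accidentally swallow a far-away cycle vertex — if it does, that is actually the desired contradiction with non-inclusion, so the case analysis branches here). A cleaner alternative, which I would try first, is to avoid explicit induction: pick the configuration and directly locate three sets $C_a, C_b, C_c$ among the cycle (or among $\{$cycle sets$\}\cup\{$the unions produced by \AX{(l2)}$\}$) that overlap pairwise with $C_a\cap C_b\cap C_c\notin\{C_a\cap C_b, C_a\cap C_c, C_b\cap C_c\}$, contradicting the weak-hierarchy property guaranteed by \AX{(l1)} (Lemma~\ref{lem:L1->W}); the $x_i$'s and the sandwiching inclusions should make such a triple easy to exhibit once the cycle has been "fattened" by one application of \AX{(l2)}. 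Either route terminates because $V$ is finite and the sets in play only grow.
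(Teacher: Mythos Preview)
Your outline shares all the key ingredients with the paper's proof: consecutive overlaps, the sandwiching $C_i\subseteq C_{i-1}\cup C_{i+1}$ forced by \AX{(l1)} (case~(iii)), the disjointness $C_{i-1}\cap C_{i+1}=\emptyset$ (which the paper obtains directly from \AX{(l1)}$\Rightarrow$\AX{(n3o)}, Cor.~\ref{cor:l1->n3o}), and the use of \AX{(l2)} to realise $C_{i-1}\cup C_i\cup C_{i+1}$ as a transit set. The difference, and the genuine gap, is in how you close the argument.

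Your plan ``iterate until length $3$'' does not terminate correctly: merging three consecutive sets drops the cycle length by $2$, so from $n=4$ (or any even $n$) you never reach $3$. The paper instead treats $n=4$ as the base case and uses the precise form of \AX{(l2)}: the witnesses $s\in C_1\setminus C_2$ and $t\in C_3\setminus C_2$ lie in $C_4$ because of the sandwiching ($C_1\subseteq C_4\cup C_2$ and $C_3\subseteq C_2\cup C_4$), hence by monotonicity $C_1\cup C_2\cup C_3=R(s,t)\subseteq C_4$, contradicting $x_1\in C_2\setminus C_4$. You record the witnesses $s,t$ but never exploit their location together with \AX{(m)}; that is exactly the missing step. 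For $n>4$ the paper also avoids iteration: one merge produces the shorter weak $\beta$-cycle $(C,C_4,\dots,C_n)$ in which $C$ is \emph{not} sandwiched (since $x_1\in C\setminus(C_4\cup C_n)$), and this already contradicts what \AX{(l1)} forces on any $\beta$-cycle of length $\ge 4$ (or, for $n=5$, yields a $3$-cycle ruled out by \AX{(w)}).

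Finally, your ``cleaner alternative'' of exhibiting a triple violating the weak-hierarchy condition cannot succeed: \AX{(l1)} already implies \AX{(w)} (Lemma~\ref{lem:L1->W}), so $\mathscr{C}_R$ is a weak hierarchy and no such triple exists among transit sets, including the unions produced by \AX{(l2)}. The contradiction has to come through monotonicity (for $n=4$) or through the failure of sandwiching in the shortened cycle (for $n>4$), not through \AX{(w)}.
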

\begin{proof}
  Since \AX{(l1)} implies \AX{(w)}, $\mathscr{C}_R$ cannot contain a
  3-cycle. Also, it follows from \AX{(l1)} that $\mathscr{C}_R$ can not
  contain a weak $\beta$-cycle $(C_1,\dots,C_n)$ with $(C_i\setminus
  C_{i+1}) \cup (C_i\setminus C_{i-1}) \neq\emptyset$ for some $i$. First,
  suppose that $\mathscr{C}_R$ contains a weak $\beta$-cycle of length 4
  say, $(C_1,C_2,C_3,C_4)$ with $(C_i\setminus C_{i+1}) \cup (C_i\setminus
  C_{i-1}) =\emptyset$ for all $i$. We have  $C_1\overlaps C_2\overlaps
  C_3$. Since it is a weak $\beta$-cycle, $C_1\nsubseteqq C_3$ and
  $C_3\nsubseteqq C_1$. Therefore, using that \AX{(L1)} implies \AX{(N3O)}
  we have, $C_1\cap C_3=\emptyset$. Then \AX{(l2)} implies that
  $R(p,q)=C_1\cup C_2\cup C_3$ for $p\in C_1\setminus C_2$ and $q\in
  C_3\setminus C_2$. Moreover, $p,q\in C_4$ and therefore, $R(p,q)\subseteq
  C_4$, a contradiction.  Hence $\mathscr{C}_R$ can not contain such a
  4-cycle. Now assume that $\mathscr{C}_R$ contains a weak $\beta$-cycle
  $(C_1,\dots ,C_n)$ of length $n>4$. As argued above, it must satisfy
  $(C_i\setminus C_{i+1}) \cup (C_i\setminus C_{i-1}) =\emptyset$ for all
  $i$. Since $C_{i-1}\overlaps C_i \overlaps C_{i+1}$ and
  $C_{i-1}\nsubseteqq C_{i+1}$ and $C_{i+1}\nsubseteqq C_{i-1}$, we have
  $C_{i-1}\cap C_{i+1}=\emptyset$ by \AX{(N3O)}.  In particular, we have
  $C_1\overlaps C_2\overlaps C_3$ and $C_1\cap C_3=\emptyset$. Together
  with \AX{(l2)}, this implies that $C_1\cup C_2\cup C_3\eqqcolon C \in
  \mathscr{C}_R$. Then the weak $\beta$-cycle, $(C,C_4,\dots ,C_n)$ has the
  property that $(C\setminus C_{4})\cup (C\setminus C_{n}) \neq \emptyset$,
  which is the case we have already ruled out. Hence $\mathscr{C}_R$ is
  totally balanced ($\beta$-acyclic).
\end{proof}

\begin{theorem}
  \label{thm:L1L2->PY}
  If $R$ is a monotone transit function satisfying \AX{(l1)} and \AX{(l2)},
  then $\mathscr{C}_R$ is pyramidal.
\end{theorem}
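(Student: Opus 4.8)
The plan is to establish pre-pyramidality of $\mathscr{C}_R$ by ruling out, one by one, the five forbidden induced sub-hypergraphs in Eq.~(\ref{eq:inthypg}). Since \AX{(l1)} implies \AX{(w)} for monotone transit functions (so that $\mathscr{C}_R$ is a weak hierarchy, cf.\ Lemma~\ref{lem:L1->W}) and since \AX{(w)} forces $V\in\mathscr{C}_R$, the system $\mathscr{C}_R$ is a clustering system; and a monotone transit function whose transit sets form a pre-pyramidal system automatically has $\mathscr{C}_R$ closed and hence pyramidal. So the entire task reduces to excluding Tucker's configurations.

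Two of them come for free. The first forbidden configuration is a weak $\beta$-cycle, which Lemma~\ref{lem:l2-no-hypercycle} rules out under \AX{(l1)} and \AX{(l2)}. The third is excluded because \AX{(l1)} implies \AX{(n3o)} (Corollary~\ref{cor:l1->n3o}), hence \AX{(wp)} (Lemma~\ref{lem:n3o->wpy}), hence \AX{(i)} (Lemma~\ref{lem:wp->i}); and \AX{(i)} is exactly the statement that $\mathscr{C}_R$ satisfies \AX{(I)}, i.e., that the third configuration is absent. The fifth configuration is redundant in the monotone setting, being ruled out once and for all by the presence of $V\in\mathscr{C}_R$. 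Thus only the second and fourth configurations remain.

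The second configuration is a fixed small sub-hypergraph, and one checks by a short case analysis that its occurrence in $\mathscr{C}_R$ would produce either a weak $\beta$-cycle or a triple of transit sets violating one of \AX{(w)}, \AX{(wp)}, \AX{(i)} --- each already excluded. The fourth configuration carries a parameter: it contains $k\ge 0$ ``small'' sets lying inside a large set, and here \AX{(l2)} is the key tool. By \AX{(n3o)} these small sets form a chain $D_1\overlaps D_2\overlaps\cdots\overlaps D_m$ whose non-consecutive members are disjoint, so iterating \AX{(l2)} (two sets at a time along the chain) shows that $D_1\cup\cdots\cup D_m\in\mathscr{C}_R$, and moreover produces reference points $s,t$ outside the interior small sets with $R(s,t)=D_1\cup\cdots\cup D_m$. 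By monotonicity $s$ and $t$ still lie in the surrounding sets of the configuration, so the merged set is confined exactly where the chain used to be; substituting it collapses the configuration to its base case $k=0$, which is again a fixed small sub-hypergraph excluded by \AX{(w)}, \AX{(wp)}, \AX{(i)} or $\beta$-acyclicity. Having excluded all five configurations, $\mathscr{C}_R$ is pre-pyramidal, and therefore pyramidal.

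The step I expect to require the most care is the contraction in the fourth configuration: one has to check that iterating \AX{(l2)} along the chain is always legitimate (the disjointness of the already-merged outer set from the next small set needed to apply \AX{(l2)} is precisely what the ``non-consecutive members disjoint'' property, itself a consequence of \AX{(n3o)}, provides), that the reference points it returns are genuinely ``seen'' by the surrounding sets so that monotonicity pins the merged set in place, and that the collapsed configuration does not secretly re-create the overlap pattern just removed. These are combinatorial verifications over Tucker's diagrams rather than conceptual difficulties, but they are where the proof must be written out carefully.
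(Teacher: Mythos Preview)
Your overall plan---exclude Tucker's five forbidden sub-hypergraphs---matches the paper's, and your treatment of the first and third configurations is fine. But two points diverge from the paper's argument, and one of them is an actual gap.

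First, the gap: your dismissal of the fifth configuration is wrong. The presence of $V\in\mathscr{C}_R$ does \emph{not} rule it out; nothing prevents two large overlapping clusters with small sets in their intersection just because $V$ is also a cluster. The correct reduction (used in the paper and later in Section~\ref{sect:45}) is that $\mathscr{C}_R$ is \emph{closed} (since \AX{(l1)} implies \AX{(w)} implies \AX{(k)}), so the intersection of the two large ellipses in the fifth configuration is itself a cluster; that intersection, together with the small sets and one of the two outer points, forms a fourth configuration. Thus ``config~5 $\Rightarrow$ config~4'' under closedness, and it suffices to exclude the fourth.

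Second, the paper's route through configurations~2, 3, and 4 is much shorter than yours: \AX{(L1)} alone excludes all three directly, with no recourse to \AX{(l2)}. For the fourth configuration, for instance, the large set $D$ overlaps both end-sets $R(x_1,x_2)$ and $R(x_{n-1},x_n)$, yet the witness point $y\in D$ lies in neither, so $D\not\subseteq R(x_1,x_2)\cup R(x_{n-1},x_n)$; since the end-sets are not nested, this immediately contradicts \AX{(L1)} (this is essentially the content of the later Lemma showing \AX{(l1)} $\Rightarrow$ \AX{(p4)}). Similar one-line contradictions dispose of the second and third configurations. Your iterative \AX{(l2)}-merging scheme for the fourth configuration is therefore unnecessary machinery---the only place \AX{(l2)} is genuinely needed is in Lemma~\ref{lem:l2-no-hypercycle}, to kill the residual $\beta$-cycles that \AX{(L1)} alone cannot exclude (namely those with $C_i\setminus(C_{i-1}\cup C_{i+1})=\emptyset$ for all $i$).
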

\begin{proof}
  Since \AX{(l1)} implies that $\mathscr{C}_R$ satisfies \AX{(L1)}, we
  conclude from Obs.~\ref{lem:L1->WP} and Lemma~\ref{lem:L1->W} that
  $\mathscr{C}_R$ is closed and satisfies \AX{(W)} and
  \AX{(WP)}. Furthermore, \AX{(L1)} implies that $\mathscr{C}_R$ cannot
  contain the second, third, and fourth forbidden configurations of an
  interval hypergraph in Eq.(\ref{eq:inthypg}).  Since $\mathscr{C}$ is
  closed, the fifth configuration is also ruled out because the
  intersection of the large sets in the fifth configuration is again a
  cluster. Thus, the fifth configuration contains the fourth one as a
  subhypergraph. A closed clustering system satisfying \AX{(L1)} is,
  therefore, either pyramidal or contains a hypercycle
  $\mathscr{H}_n\coloneqq \{C_i|1\le i\le n\}$ such that
  $\mathcal{C}_i\setminus (\mathcal{C}_{i+1} \cup \mathcal{C}_{i-1})
  =\emptyset$ for all $1\le i\le n$ (indices taken $\mod n$), for some $n>3$. By Lemma~\ref{lem:l2-no-hypercycle}, \AX{(l1)} and \AX{(l2)}
    rule out the existence of such a hypercycle.
\end{proof}
Axiom \AX{(L1)} thus enforces the existence of a linear order locally. It
is insufficient to ensure global consistency with a linear order, however.  

As a consequence of Thm.~\ref{thm:L1L2->PY} and Example~\ref{Ex:py-l1-l2},
the transit functions satisfying \AX{(l1)} and \AX{(l2)} are a proper
subset of the pyramidal transit functions, which are a proper
subset of the weakly pyramidal transit functions. On the other hand,
monotone transit functions of paired hierarchies satisfy \AX{(l1)} and
\AX{(l2)}. Example~\ref{l1+l2-uc} again shows that the converse is not true.

\section{Totally Balanced Transit Functions}
\label{sect:tb}

In \cite{Changat:18a} we considered \AX{(u)} as a key property of
hierarchies. Fig.~4 in \cite{Changat:21w} shows, however, that pyramidal
transit functions do not necessarily satisfy \AX{(u)}. Here we consider a
natural generalization:
\begin{description}
\item[\AX{(u3)}] If $R(x,y)\not\subseteq\{x,y\}$, then there exists  $z\in
  R(x,y)\setminus\{x,y\}$ such that $R(x,z)\cup R(z,y)=R(x,y)$.
\end{description}
Clearly, \AX{(u)} implies \AX{(u3)}. Example~\ref{ex:ph-not-uc} shows that
the converse is not true.

The example of Fig.~\ref{fig:cex1}A indicates that there are monotone
transit functions that satisfy \AX{(w)} and \AX{(wp)} but violate
\AX{(u3)}. Example~\ref{ex:l2-w-wp}, furthermore, shows that \AX{(u3)} does
not imply \AX{(wp)}. Fig.~\ref{fig:cex1}C shows that axioms \AX{(w)} and
\AX{(u3)} do not imply \AX{(wp)}, and Fig.~\ref{fig:cex1}B shows that
satisfying \AX{(wp)} and \AX{(u3)} does imply \AX{(w)}.  Hence, \AX{(w)},
\AX{(wp)}, and \AX{(u3)} are independent.

\begin{lemma}\label{py->u3}
  If $R$ is pyramidal, then $R$ satisfies \AX{(u3)}.
\end{lemma}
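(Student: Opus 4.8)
The plan is to exploit the linear order guaranteed by pyramidality. Suppose $R$ is pyramidal, so there is a total order $<$ on $V$ such that every transit set $R(a,b)$ is an interval with respect to $<$. Fix $x,y\in V$ with $R(x,y)\not\subseteq\{x,y\}$; without loss of generality assume $x<y$. Since $R$ is monotone and pyramidal, $[x,y]\subseteq R(x,y)$ (as noted before the proposition on \AX{(b3)}/\AX{(b4)}), so $R(x,y)=[a,b]$ for the order-minimum $a$ and order-maximum $b$ of $R(x,y)$, with $a\le x<y\le b$. Because $R(x,y)\ne\{x,y\}$, the interval $[a,b]$ strictly contains $\{x,y\}$, so there is some $z\in R(x,y)\setminus\{x,y\}$; I will pick $z$ to be, say, the order-successor of $x$ inside $R(x,y)$, or more robustly any interior point, and show $R(x,z)\cup R(z,y)=R(x,y)$.

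The key step is to prove $R(x,z)\cup R(z,y)=R(x,y)$. The inclusion ``$\subseteq$'' is immediate from monotonicity since $x,z\in R(x,y)$ gives $R(x,z)\subseteq R(x,y)$ and similarly $R(z,y)\subseteq R(x,y)$. For ``$\supseteq$'', I use that $R(x,z)$ and $R(z,y)$ are both intervals containing $z$, hence their union is an interval containing $z$; moreover $R(x,z)\supseteq[x,z]$ and $R(z,y)\supseteq[z,y]$ by the $[u,v]\subseteq R(u,v)$ property, so $R(x,z)\cup R(z,y)\supseteq[x,y]\ni x,y$. It remains to capture the points of $R(x,y)$ lying outside $[x,y]$, i.e.\ those in $[a,x)$ and in $(y,b]$. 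Here is where $z$ being interior matters: I want to choose $z$ so that it ``sees'' the full extent of $R(x,y)$. The cleanest route is to take $z$ to be an element of $R(x,y)\setminus\{x,y\}$ that is order-extremal, say $z=a$ if $a\notin\{x,y\}$ (which forces $a<x$, so $a\ne y$ too), otherwise $z=b$. If $z=a<x$, then by monotonicity $R(x,z)=R(x,a)\subseteq R(x,y)$ is an interval containing both $a$ and $x$, hence $R(x,a)\supseteq[a,x]$, and $R(z,y)=R(a,y)\supseteq[a,y]\ni a,y$; but I still need to reach points in $(y,b]$, which sit in neither $[a,x]$ nor... so this naive choice is not yet enough.

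The honest obstacle, therefore, is reaching \emph{both} ``overhang'' intervals $[a,x)$ and $(y,b]$ simultaneously with a single interior point $z$. I expect the resolution to be: since $R(x,y)=[a,b]$ is itself a transit set and $R$ is monotone, and since some element of $\{a,b\}$ is distinct from both $x$ and $y$ (indeed at least one of the strict inclusions $a<x$, $y<b$ holds), one argues by a symmetry/case split. In the case $a<x$: pick any $z\in R(x,y)$ with $x<z$, e.g.\ the successor of $x$; then $z\ne x$ and $z\ne y$ unless $R(x,y)=[x,y]$ with $y$ the successor of $x$, but then $a<x$ already gives an interior point to use as $z$ instead with the roles of the two ends swapped. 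The argument I would actually carry out: let $z\in R(x,y)\setminus\{x,y\}$ be arbitrary; then $R(x,z)$ and $R(z,y)$ are intervals, $R(x,z)\ni x,z$ and $R(z,y)\ni z,y$, so $R(x,z)\cup R(z,y)$ is an interval containing $x$, $z$, and $y$. It therefore contains $[\min(x,z,y),\max(x,z,y)]=[x,y]$. If this union already equals $[a,b]=R(x,y)$ we are done; otherwise it is a proper subinterval $[a',b']\subsetneq[a,b]$ with $[x,y]\subseteq[a',b']$, so it misses a point at one end, say $w\in[a,a')$ with $w<a'\le x$. Then I re-run the choice taking $z$ close to that end — formally, choose $z$ to be the order-minimum of $R(x,y)\setminus\{x,y\}$ if that minimum is $<x$. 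I anticipate needing the observation that $R(x,z)$, being an interval containing $x$ and $z<x$, contains $a=\min R(x,y)$ once $z$ is chosen small enough, combined with a symmetric choice at the top handled by noting $z\in R(z,y)$ and $R(z,y)\supseteq[z,y]$ reaches up to $y$ while some other transit set reaches to $b$. The cleanest writeup picks $z$ to be any element of $R(x,y)\setminus\{x,y\}$ and then separately notes that $a\in R(x,y)$ together with $a,y\in R(x,y)$ and monotonicity of intervals forces $a\in R(x,z)\cup R(z,y)$; I will spell out that an interval containing $x$ and $y$ and lying inside $[a,b]$, once unioned over the two pieces through $z$, must exhaust $[a,b]$ because any proper subinterval would contradict the extremality of $a$ or $b$ as realized elements of the transit set $R(x,y)$. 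The main obstacle is making this last extremality argument airtight rather than hand-wavy, and that is the one step I would devote the most care to.
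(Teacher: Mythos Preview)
Your direct approach via the linear order is genuinely different from the paper's proof. The paper argues by contradiction: assuming \AX{(u3)} fails for $R(x,y)$, it builds a strictly increasing chain $R(x,z)\cup R(z,y)\subsetneq R(x,z')\cup R(z',y)\subsetneq\cdots$ inside $R(x,y)$, using the weak-hierarchy property \AX{(w$_3$)} and the absence of hypercycles (the first Tucker configuration) to produce each new point; finiteness of $R(x,y)$ then yields the contradiction. Your route is more elementary and, once completed, shorter --- but as written it has a real gap.

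The gap is precisely where you write ``I still need to reach points in $(y,b]$ \dots\ so this naive choice is not yet enough.'' In fact that choice \emph{is} enough; you are missing one application of monotonicity in the reverse direction. With $R(x,y)=[a,b]$, $a\le x<y\le b$, and (say) $a<x$, set $z=a$. Then $R(a,y)$ is an interval containing $[a,y]$, so in particular $x\in[a,y]\subseteq R(a,y)$. Hence $x,y\in R(a,y)$, and monotonicity gives $R(x,y)\subseteq R(a,y)$. Combined with $R(a,y)\subseteq R(x,y)$ (from $a,y\in R(x,y)$), this yields $R(a,y)=R(x,y)=[a,b]$, so already $R(x,z)\cup R(z,y)=R(x,y)$. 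Symmetrically, if $a=x$ but $y<b$ take $z=b$ and obtain $R(x,b)=R(x,y)$. Finally, if $a=x$ and $b=y$ then $R(x,y)=[x,y]$ and any $z$ with $x<z<y$ gives $R(x,z)\cup R(z,y)\supseteq[x,z]\cup[z,y]=[x,y]=R(x,y)$. Once you insert this single monotonicity step, your argument is complete and avoids the paper's machinery of hypercycles and infinite descent entirely.
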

\begin{proof}
  Assume that $R$ is pyramidal and assume for contradiction that $R(x,y)$
  violates \AX{(u3)}. Then, for every $z\in R(x,y)\setminus\{x,y\}$, we
  have $R(x,z)\cup R(z,y)\subsetneq R(x,y)$, and thus there exists $z'\in
  R(x,y)\setminus (R(x,z)\cup R(z,y))$. Consider two such elements $z$ and
  $z'\in R(x,y)$. Using again that $R(x,y)$ violates \AX{(u3)}, we have
  $R(x,z')\cup R(z',y)\subsetneq R(x,y)$ and thus $R(x,z)\overlaps R(z,y)$
  and $R(x,z')\overlaps R(z',y)$. If $z\notin R(x,z')\cup R(z',y)$, then
  $R(x,z)$, $R(z,y)$, $R(z',y)$, and $R(x,z')$ form a hyper-cycle, i.e.,
  the first forbidden configuration in Eq.(\ref{eq:inthypg}), thus
  contradicting that $R$ is pyramidal. Therefore, $z\in R(x,z')\cup
  R(z',y)$.  Now suppose $z\in R(x,z')\setminus R(z',y)$ and note that
  $y\in R(z',y)\setminus R(x,z')$. Since pyramidal set systems are in
  particular weak hierarchies, we can use \AX{(w3)} to infer $z'\in
  R(z,y)$, a contradiction. If $z\in R(z',y)\setminus R(x,z')$, again by
  using \AX{(w3)}, we obtain $z'\in R(x,z)$, a contradiction. Therefore, we
  conclude that $z\in R(x,z')\cap R(z',y)$, which implies $R(x,z)\cup
  R(z,y)\subset R(x,z')\cup R(z',y)$.
  
  Since $R(x,z')\cup R(z',y)\subsetneq R(x,y)$ there exists $z''\in
  R(x,y)\setminus R(x,z')\cup R(z',y)$.  Moreover, $R(x,z')\overlaps
  R(z',y)$ and $R(x,z'')\overlaps R(z'',y)$.  Again, if $z'\notin
  R(x,z'')\cup R(z'',y)$, then $R(x,z')$, $R(z',y)$, $R(z'',y)$, $R(x,z'')$
  form a hyper-cycle, contradicting the assumption that $R$ is
  pyramidal. Therefore, $z'\in R(x,z'')\cup R(z'',y)$. Arguing as above, if
  $z'\in R(x,z'')\setminus R(z'',y)$, then from $y\in R(z'',y)\setminus
  R(x,z'')$ and \AX{(w3)}, we obtain $z''\in R(z',y)$, a
  contradiction. Similarly, $z'\in R(z'',y) \setminus R(x,z'')$ implies
  $z''\in R(x,z')$, a contradiction. Therefore, we have $z'\in R(x,z'')\cap
  R(z'',y)$ and thus $R(x,z')\cup R(z',y)\subset R(x,z'')\cup R(z'',y)$.
  
  Since $R(x,y)$ violates \AX{(u3)} we obtain $R(x,z'')\cup R(z'',y)\subset
  R(x,y)$, and thus there exists $z'''\in R(x,y)\setminus R(x,z'')\cup
  R(z'',y)$. Continuing this process yields an infinite sequence of
  distinct points $z,z',z'',z''',\dots \in R(x,y)$, which is impossible
  because $R(x,y)$ is finite. Therefore $R(x,y)$ cannot violate \AX{(u3)}.
\end{proof}
Example~\ref{ex:u3-py} shows that \AX{(u3)} does not imply that $R$ is
pyramidal:
\begin{example}
  \label{ex:u3-py}
  Let $R$ be defined on $V=\{a,b,c,d\}$ by $R(a,b)=\{a,b,c,d\}$,
  $R(a,c)=\{a,c\}$, $R(c,b)=\{c,b,d\}$, $R(a,d)=\{a,d\}$,
  $R(b,d)=\{b,c,d\}$, $R(c,d)=\{c,d\}$. Then $R$ is monotone and satisfies
  \AX{(u3)}, but $a,c,d$ violates \AX{(w)}. Thus, in particular, $R$ is
  not pyramidal.
\end{example}

The transit function $R$ in Example \ref{Ex:L2+wp-l1-w} satisfies \AX{(u3)},
but the sets $R(a,b)$, $R(a,c)$, and $R(b,c)$ violate \AX{(l1)}. The
transit function $R$ in Example \ref{ex:py+l1+u3-l2-ph} satisfies \AX{(u3)}
but violates \AX{(l2)}. That is, \AX{(u3)} $\nRightarrow$ \AX{(l1)} and
\AX{(u3)} $\nRightarrow$ \AX{(l2)}.  Fig.~\ref{fig:cex1}A shows that
\AX{(l1)} $\nRightarrow$ \AX{(u3)}. Therefore, Axiom \AX{(l1)} and
\AX{(u3)} are independent.
We conjecture that \AX{(l2)} implies \AX{(u3)} for monotone transit
functions since we were not successful in finding a counter-example.

The following Example \ref{w+wp+u3+hc-py} shows that the independent axioms
\AX{(w)}, \AX{(wp)}, and \AX{(u3)}, even together, do not imply that a
monotone transit $R$ function is pyramidal. 
\begin{example}\label{w+wp+u3+hc-py}
  Let $R$ be defined on $V=\{a,b,c,d,e,f\}$ by $R(a,b)=\{a,b\}$,
  $R(b,c)=R(b,d)=R(c,d)=\{b,c,d\}$, $R(d,e)=\{d,e\}$, $R(c,f)=\{c,f\}$, and
  the other sets are singletons or $V$. Here $R$ is monotone and satisfies
  \AX{(w)}, \AX{(wp)}, and \AX{(u3)} but is not pyramidal.
\end{example}

Let $<$ be a linear order on $V$ and consider a subset $V'\coloneqq
\{x_i|i=1,\dots , k\}$ such that $x_1<x_2<\dots <x_k$. Then the intervals
$[x_1,x_i]$ and $[x_1,x_j]$ do not overlap for any $x_i,x_j\in V'$.
Analogously, $[x_i,x_k]$ and $[x_j,x_k]$ do not overlap. This simple
observation suggests considering the following conditions:
\begin{description}
\item[\AX{(tb)}] For all $W\subseteq V$ with $|W|\ge 3$, there exists
  $x\in W$ such that $R(x,u)\subseteq R(x,v)$ or $R(x,v)\subseteq R(x,u)$
  for all $u,v\in W$.
\item[\AX{(hc)}] For all $W\subseteq V$ with $|W|\ge 3$, there exist
  distinct $x,y\in W$ such that $R(x,u)\subseteq R(x,v)$ or
  $R(x,v)\subseteq R(x,u)$ and $R(y,u)\subseteq R(y,v)$ or $R(y,v)\subseteq
  R(y,u)$ for all $u,v\in W$.
\end{description}
It follows immediately from the definition that \AX{(hc)} implies
\AX{(tb)}.  Example \ref{py-n3o} shows that \AX{(tb)} does not imply
\AX{(n3o)}. Fig.~\ref{fig:cex1}A shows that \AX{(n3o)} does not imply
\AX{(tb)}. The example in Fig.~\ref{fig:cex1}B, furthermore, shows that
\AX{(hc)} does not imply \AX{(wp)}. Moreover, the following Example
\ref{wp-hc} shows that \AX{(wp)} does not imply \AX{(hc)}. The transit
function corresponding to the set system in Fig.~\ref{fig:cex1}B trivially
satisfies \AX{(l2)} but violates \AX{(tb)}. The monotone transit function
in Example \ref{ex:py+l1+u3-l2-ph} is pyramidal and satisfies \AX{(tb)} but
violates \AX{(l2)}. In summary, \AX{(tb)} is independent of the axioms
\AX{(n3o)}, \AX{(wp)}, and \AX{(l2)}.
\begin{example}
  \label{wp-hc}
  Consider the transit function $R$ on $V=\{a,b,c,d,e,f\}$ defined by
  $R(a,b)=\{a,b\}$, $R(a,c)=R(a,f)=R(b,f)=R(c,f)=\{a,b,c,f\}$,
  $R(b,c)=\{b,c\}$, $R(b,d)=R(c,d)=\{b,c,d\}$, $R(b,e)=R(c,e)=\{b,c,d,e\}$,
  $R(d,e)=\{d,e\}$, $R(e,f)=\{e,f\}$, $R(a,d)=R(a,e)=R(d,f)=V$. Here $R$
  satisfies \AX{(wp)} but violates \AX{(hc)}.
\end{example}

\begin{lemma}
  \label{lem:hc<->tb}
  Let $R$ be a monotone transit function on $V$. $R$ satisfies \AX{(tb)} if
  and only if $R$ satisfies \AX{(hc)}.
\end{lemma}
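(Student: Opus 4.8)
The direction \AX{(hc)}$\Rightarrow$\AX{(tb)} is immediate, since in \AX{(hc)} the nesting requirement imposed on the two distinct vertices $x,y\in W$ contains, for the vertex $x$ alone, exactly the requirement of \AX{(tb)}. So the plan is to assume that $R$ satisfies \AX{(tb)} and to derive \AX{(hc)}. Call a vertex $x\in W$ \emph{good for $W$} if the transit sets $\{R(x,w)\mid w\in W\}$ form a chain under inclusion; thus \AX{(tb)} says every $W$ with $|W|\ge 3$ has a good vertex and \AX{(hc)} says it has two. A first, easy observation is that \AX{(tb)} forces \AX{(w)}: if \AX{(w)} failed on $\{x,y,z\}$ then $x\notin R(y,z)$, $y\notin R(x,z)$ and $z\notin R(x,y)$, so none of $R(x,\cdot)$, $R(y,\cdot)$, $R(z,\cdot)$ is a chain on $\{x,y,z\}$, contradicting \AX{(tb)} for $W=\{x,y,z\}$. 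I will use \AX{(w)} freely.

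The combinatorial core is a \emph{triple lemma}: if $x$ is good for a three-element set $\{x,a,b\}$ with, say, $R(x,a)\subseteq R(x,b)$, then $a,b\in R(x,b)$, so monotonicity \AX{(m)} gives $R(a,b)\subseteq R(x,b)=R(b,x)$, whence $R(b,a)\subseteq R(b,x)$ and $b$ is good for $\{x,a,b\}$ too. Consequently every three-element set with one good vertex has two, so \AX{(hc)} holds when $|W|=3$. I would then prove \AX{(hc)} for all $W$ by induction on $|W|$. Given $W$ with $|W|=n\ge 4$, pick (by \AX{(tb)}) a vertex $x$ good for $W$, order $\{R(x,w)\mid w\in W\}$ linearly by inclusion, let $T$ be its largest element and $M=\{w\in W\mid R(x,w)=T\}$. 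Then $W\subseteq T$ (each $w$ lies in $R(x,w)\subseteq T$), $x\notin M\ne\emptyset$, and for every $z\in M$ and $w\in W$ one has $z,w\in R(x,z)=T$, so $R(z,w)\subseteq T=R(z,x)$; in particular $R(z,x)$ is the top of $\{R(z,w)\mid w\in W\}$, and therefore $z$ is good for $W$ if and only if $z$ is good for $W\setminus\{x\}$. If $M=W\setminus\{x\}$, then $|W\setminus\{x\}|=n-1\ge 3$ and \AX{(tb)} (or the triple lemma) yields $z\in M$ good for $W\setminus\{x\}$, hence good for $W$, which together with $x$ gives \AX{(hc)} for $W$. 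If $M\subsetneq W\setminus\{x\}$, pick $w_{0}\in (W\setminus\{x\})\setminus M$; then $x$ is still good for $W\setminus\{w_{0}\}$ with the same top class $M$, so the induction hypothesis applied to $W\setminus\{w_{0}\}$ supplies $z\in M$ good for $W\setminus\{w_{0}\}$, and it remains to reinsert $w_{0}$.

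The step I expect to be the main obstacle is precisely this reinsertion: proving that the $z\in M$ obtained for $W\setminus\{w_{0}\}$ is already good for all of $W$, i.e.\ that $R(z,w_{0})$ is comparable with $R(z,w)$ for every $w\in W\setminus\{w_{0}\}$. Applying \AX{(w)} to the triple $\{z,w_{0},w\}$ disposes of this unless $z\in R(w_{0},w)$ while $w_{0}\notin R(z,w)$ and $w\notin R(z,w_{0})$, and in that residual case one must combine the triple lemma on $\{z,w_{0},w\}$ with the facts that $z\in M$ but $w_{0}\notin M$ (so $R(x,w_{0})\subsetneq T=R(x,z)$) and that $z$ is good for $W\setminus\{w_{0}\}$, tracking the inclusions inside $T$ until the configuration is forced to collapse. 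The delicacy here — and the reason a naive ``take the maximum of the $\preceq_{x}$-chain'' argument fails — is the presence of ties at the top of the chain, i.e.\ $|M|\ge 2$, which is why the argument has to be organized as an induction that peels off one vertex at a time rather than a direct choice of the second good vertex.
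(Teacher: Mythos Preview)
Your route differs structurally from the paper's. Both argue by induction on the size of the set, but the paper \emph{removes the \AX{(tb)}-good vertex} $a$ and applies the induction hypothesis to $V'=V\setminus\{a\}$, obtaining two vertices $b,c$ good for $V'$; since $a$ is good for $V$ one has, w.l.o.g., $R(a,b)\subseteq R(a,c)$, whence $b\in R(a,c)$ and $R(b,c)\subseteq R(a,c)$ by monotonicity, and the paper then argues that $c$ is already good for all of $V$. No ``reinsertion'' step arises, because the deleted vertex is precisely the one already certified as good. You instead keep $x$ and delete a peripheral vertex $w_0\notin M\cup\{x\}$, which is what creates the reinsertion problem you flag as the main obstacle.

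Your proposal has a genuine gap at exactly that point. First, the sentence ``the induction hypothesis applied to $W\setminus\{w_{0}\}$ supplies $z\in M$ good for $W\setminus\{w_{0}\}$'' tacitly uses a \emph{strengthened} induction hypothesis --- that for any good vertex $x$ with top class $M$, the second good vertex can always be found inside $M$ --- which is strictly more than \AX{(hc)} asserts and would have to be stated and carried through the induction explicitly. Second, even granting that strengthening, the reinsertion is not settled: you correctly isolate the residual case $z\in R(w_0,w)$ with $w_0\notin R(z,w)$ and $w\notin R(z,w_0)$, and one can push a step further (using that $x$ is good for $W$ and $w_0\notin M$) to show that any offending $w$ must itself lie in $M$; but when $|M|\ge 2$ this does not collapse the configuration, and the hints in your final paragraph (``tracking the inclusions inside $T$ until the configuration is forced to collapse'') do not supply a concrete argument. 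The paper's device of deleting the good vertex rather than a peripheral one sidesteps both issues and yields a much shorter proof.
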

\begin{proof}
  It follows immediately from the definition that \AX{(hc)} implies
  \AX{(tb)}. To prove the converse, we proceed by induction in
    $n=|V|$. First, consider the base case $n=3$ and assume that the
    monotone transit function $R$ on $V=\{a,b,c\}$ satisfies
    \AX{(tb)}. W.l.o.g., suppose $R(a,b)\subseteq R(a,c)$. Then by the
  monotonicity of $R$, we have $R(a,c)=\{a,b,c\}$ and thus 
  $R(b,c)\subseteq R(a,c)$, i.e., $R$ satisfies \AX{(hc)}.
    
  Now suppose $|V|>3$ and the assertion holds for all proper subsets of
  $V$. Since $R$ satisfied \AX{(tb)}, there exists $a\in V$ such that $R(a,u)$
  and $R(a,v)$ do not overlap for any pair $u$ and $v$. Consider
  $V'\coloneqq V\setminus\{a\}$. The induction hypothesis stipulates that
  there exist $b,c\in V'$ such that for all $u,v\in V'$, the sets $R(b,u)$
  and $R(b,v)$ do not overlap and the sets $R(c,u)$ and $R(c,v)$ do not
  overlap. Since $R$ satisfies \AX{(tb)} we know that $R(a,b)$ and $R(a,c)$
  do not overlap. W.l.o.g., we may assume $R(a,b)\subseteq R(a,c)$ and thus
  $b\in R(a,c)$. Monotonicity now implies $R(b,c)\subseteq R(a,c)$. If
  \AX{(hc)} does not hold, then there is $y\in V'$ such that
  $R(a,c)\overlaps R(c,y)$, which implies $R(b,c) \overlaps R(c,y)$ with
  $y\in V'$, a contradiction. Thus $R(y,c)\subseteq R(a,c)$ for all $y\in
  V'$. Since $R(a,a)=\{a\}\subseteq R(a,c)$, the statement holds for all
  $y\in V$, and thus $R$ satisfies \AX{(hc)}.
\end{proof}

In the following, we prove that \AX{(w)} is a generalization of \AX{(tb)}.
\begin{lemma}\label{tb->w}
  If $R$ is a monotone transit function satisfying \AX{(tb)} then
  it also satisfies \AX{(w)}.
\end{lemma}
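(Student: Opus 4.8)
The plan is to establish \AX{(w)} directly: given arbitrary $x,y,z\in V$, I will exhibit one of the three memberships that \AX{(w)} demands, using \AX{(tb)} applied to $W=\{x,y,z\}$. First I would dispose of the degenerate situation in which $x,y,z$ are not pairwise distinct — say $x=y$ — since then $x\in R(x,z)$ holds by \AX{(t1)} and one of the disjuncts of \AX{(w)} is satisfied trivially. Hence I may assume $x,y,z$ are pairwise distinct, so that $|W|=3$ and \AX{(tb)} applies to $W$.

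By \AX{(tb)} there is a vertex $w^\ast\in W$ such that $R(w^\ast,u)$ and $R(w^\ast,v)$ are comparable (one contained in the other) for all $u,v\in W$. The core of the argument is then a short three-way case analysis according to whether $w^\ast$ equals $x$, $y$, or $z$. If $w^\ast=x$, then $R(x,y)$ and $R(x,z)$ are comparable; if $R(x,y)\subseteq R(x,z)$ then $y\in R(x,y)\subseteq R(x,z)$ by \AX{(t1)}, so $y\in R(x,z)$, whereas if $R(x,z)\subseteq R(x,y)$ then $z\in R(x,z)\subseteq R(x,y)$. In either sub-case \AX{(w)} is witnessed. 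The cases $w^\ast=y$ and $w^\ast=z$ are entirely symmetric, using \AX{(t2)} to rewrite $R(w^\ast,\cdot)$: for $w^\ast=y$, comparability of $R(y,x)=R(x,y)$ and $R(y,z)$ yields $x\in R(y,z)$ or $z\in R(x,y)$; for $w^\ast=z$, comparability of $R(z,x)=R(x,z)$ and $R(z,y)=R(y,z)$ yields $x\in R(y,z)$ or $y\in R(x,z)$.

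I do not anticipate a genuine obstacle here: the proof uses only \AX{(t1)}, \AX{(t2)}, and the hypothesis \AX{(tb)}, and in fact does not invoke monotonicity at all. The only points requiring care are the bookkeeping across the three symmetric cases and the handling of the degenerate case $|W|<3$; the essential observation is simply that whenever $R(a,u)\subseteq R(a,v)$ we have $u\in R(a,v)$ by \AX{(t1)}, so nesting of two transit sets with a common base point forces one base point into the other transit set.
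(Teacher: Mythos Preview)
Your argument is correct and is essentially the same as the paper's proof: both apply \AX{(tb)} to the three-element set $\{x,y,z\}$, obtain a base point whose transit sets to the other two are nested, and then use \AX{(t1)} to extract the required membership for \AX{(w)}. The only differences are cosmetic---you spell out the degenerate case $|W|<3$ and the three symmetric cases explicitly, whereas the paper is terser---and, as you observe, monotonicity is not actually needed.
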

\begin{proof}
  Since $R$ satisfies \AX{(tb)} for any set of three distinct vertices
  $V'=\{a,b,c\}$ there exists $x\in\{a,b,c\}$ such that $R(x,a')$ and
  $R(x,a'')$ do not overlap for $\{a',a''\}=V\setminus\{x\}$ and thus
  $R(x,a')\subseteq R(x,a'')$ or $R(x,a'')\subseteq R(x,a')$. This implies
  $a''\in R(x,a')$ or $a'\in R(x,a'')$, and thus $R$ satisfies \AX{(w)}.
\end{proof}
The converse is not true, however. The example in Fig.~\ref{fig:cex1}A
satisfies \AX{(w)} but violates \AX{(tb)} and thus also \AX{(hc)}. Next we
prove that the properties \AX{(hc)} and \AX{(tb)} are satisfied by every
pyramidal transit functions.
\begin{theorem}
	\label{thm:py->hc}
  Let $R$ be a pyramidal transit function, then $R$ satisfies \AX{(hc)}.
\end{theorem}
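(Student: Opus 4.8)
The plan is to produce the two witnesses required by \AX{(hc)} explicitly, taking them to be the smallest and largest elements of $W$ with respect to the total order $<$ that makes $R$ pyramidal. First I would fix such an order and recall the two facts already noted in the paper: in this order every transit set $R(u,v)$ is an interval, and consequently $[u,v]\subseteq R(u,v)$ for all $u,v\in V$. Given $W\subseteq V$ with $|W|\ge 3$, I set $x\coloneqq\min W$ and $y\coloneqq\max W$; these are distinct since $|W|\ge 2$.

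Next I would check that $x$ has the comparability property demanded by \AX{(hc)}. For $u,v\in W$ with, say, $u\le v$, the chain $x\le u\le v$ gives $u\in[x,v]\subseteq R(x,v)$, using that $R(x,v)$ is an interval containing its endpoints $x$ and $v$. Then $x,u\in R(x,v)$, and monotonicity \AX{(m)} forces $R(x,u)\subseteq R(x,v)$, so the sets $R(x,u)$ and $R(x,v)$ are nested. The argument for $y$ is the mirror image: for $u\le v$ in $W$ we have $v\in[u,y]\subseteq R(y,u)$, hence $v,y\in R(y,u)$, and \AX{(m)} gives $R(y,v)\subseteq R(y,u)$. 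Thus the pair $x,y$ witnesses \AX{(hc)}, completing the proof.

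I do not expect a genuine obstacle here; the only point to be careful about is that the interval property is really what does the work --- it is what lets us deduce $u\in R(x,v)$ (resp.\ $v\in R(y,u)$) from the order relation alone, after which \AX{(m)} closes the argument. As an alternative route, one could prove only the single-witness statement \AX{(tb)} for $x=\min W$ and then invoke Lemma~\ref{lem:hc<->tb} to upgrade it to \AX{(hc)}; but since the symmetric argument for $\max W$ costs nothing, deriving \AX{(hc)} directly is cleaner.
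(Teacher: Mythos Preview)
Your proof is correct and follows essentially the same approach as the paper: both take the minimum and maximum of $W$ with respect to the pyramidal order as the two witnesses, use $[u,v]\subseteq R(u,v)$ to place the intermediate point inside the larger transit set, and finish with monotonicity \AX{(m)}.
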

\begin{proof}
  Let $R$ be a pyramidal transit function on $V$. Then there exists  a linear
  order $<$ on $V$ such that $R(x,y)$ is an interval with respect to $<$
  for every $x,y\in V$. Let $V'\subseteq V$ and let $x=\min\{x'\in V'\}$,
  $y=\max\{x'\in V'\}$. Let $z,z'\in V'$ and $z\leq z'$.  Then $x\leq z \le
  z'\leq y$. Now, axioms \AX{(t1)} and \AX{(t2)} implies that $x,z'\in
  R(x,z')$. Therefore, $[x,z']\subseteq R(x,z')$, which implies $z\in
  R(x,z')$. Thus, $x\leq z \le z' \implies z\in R(x,z')$. Hence, \AX{(m)}
  implies $R(x,z)\subseteq R(x,z')$.  Analogously, $z\leq z'\leq y \implies
  z'\in [z,y]\subseteq R(z,y)$ and \AX{(m)} implies $R(z',y)\subseteq
  R(z,y)$.  In summary, $R$ satisfies \AX{(hc)}.
\end{proof}
The converse is not true, however. The Example~\ref{w+wp+u3+hc-py} gives a
monotone transit function satisfying \AX{(hc)} that is not pyramidal. Even
though it also satisfies \AX{(m)}, \AX{(hc)}, \AX{(w)}, \AX{(wp)} and
\AX{(u3)}, it is not pyramidal because $\mathscr{C}_R$ contains second
forbidden configuration in Eq.(\ref{eq:inthypg}).

In the following theorem, we prove that axiom \AX{(tb)} characterizes the
canonical transit function of a totally balanced clustering system:
\begin{lemma}
  \label{lem:hc-totbal}
  Let $R$ be a monotone transit function. Then $R$ satisfies \AX{(tb)} if
  and only if $(V,\mathscr{C}_R)$ is totally balanced.
\end{lemma}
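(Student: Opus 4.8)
The plan is to route both implications through the classical characterization of totally balanced ($\beta$-acyclic) hypergraphs by \emph{nest points}: recall that a vertex $v$ of a hypergraph is a nest point if the edges containing $v$ are linearly ordered by inclusion, and that a hypergraph is totally balanced if and only if every vertex-induced subhypergraph possesses a nest point \cite{Lehel:85,Anstee:83}. For $W\subseteq V$ write $\mathscr{C}_R[W]$ for the subhypergraph induced on $W$, with edge set $\{C\cap W\mid C\in\mathscr{C}_R\}\setminus\{\emptyset\}$. The heart of the argument is a translation lemma: \emph{for a monotone transit function $R$ and any $W\subseteq V$, there is $x\in W$ with $\{R(x,u)\mid u\in W\}$ linearly ordered by inclusion if and only if $\mathscr{C}_R[W]$ has a nest point.} Granting this, \AX{(tb)} says exactly that $\mathscr{C}_R[W]$ has a nest point for every $W$ with $|W|\ge 3$; and since the cases $|W|\le 2$ are trivial (every vertex of $\mathscr{C}_R[W]$ is then a nest point), \AX{(tb)} is equivalent to ``every induced subhypergraph of $(V,\mathscr{C}_R)$ has a nest point'', i.e.\ to total balancedness.

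To prove the translation lemma, first let $x$ be a nest point of $\mathscr{C}_R[W]$ and take $u,v\in W$. Both $R(x,u)$ and $R(x,v)$ are edges of $\mathscr{C}_R$ containing $x$, so $R(x,u)\cap W$ and $R(x,v)\cap W$ are comparable, say $R(x,u)\cap W\subseteq R(x,v)\cap W$; then $u\in R(x,v)$, and \AX{(m)} (with $x\in R(x,v)$ by \AX{(t1)}) yields $R(x,u)\subseteq R(x,v)$, so the rays $R(x,u)$, $u\in W$, form a chain. Conversely, suppose $\{R(x,u)\mid u\in W\}$ is a chain and let $C\in\mathscr{C}_R$ with $x\in C$. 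By \AX{(m)} we have $R(x,u)\subseteq C$ for every $u\in C\cap W$, and by \AX{(t1)}, $u\in R(x,u)\cap W$; hence $C\cap W=\bigcup_{u\in C\cap W}(R(x,u)\cap W)$. Since $\{R(x,u)\mid u\in C\cap W\}$ is a nonempty finite chain it has a maximum $R(x,u_C)$, whence $C\cap W=R(x,u_C)\cap W$. For two edges $C,C'$ of $\mathscr{C}_R$ through $x$ we thus get $C\cap W=R(x,u_C)\cap W$ and $C'\cap W=R(x,u_{C'})\cap W$, which are comparable because $R(x,u_C)$ and $R(x,u_{C'})$ are; so $x$ is a nest point of $\mathscr{C}_R[W]$.

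For the direction \AX{(tb)} $\Rightarrow$ totally balanced one may also avoid invoking the hard half of the nest-point theorem. If $(C_1,\dots,C_n;x_1,\dots,x_n)$ is a weak $\beta$-cycle then the $x_i$ are pairwise distinct, and for $W=\{x_1,\dots,x_n\}$ and any index $m$ monotonicity forces $R(x_m,x_{m-1})\subseteq C_m$ and $R(x_m,x_{m+1})\subseteq C_{m+1}$; since $n\ge 3$ we have $x_{m+1}\notin C_m$ and $x_{m-1}\notin C_{m+1}$, so $x_{m+1}\notin R(x_m,x_{m-1})$ and $x_{m-1}\notin R(x_m,x_{m+1})$, while $x_{m-1}\in R(x_m,x_{m-1})$ and $x_{m+1}\in R(x_m,x_{m+1})$ by \AX{(t1)}. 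Hence $R(x_m,x_{m-1})$ and $R(x_m,x_{m+1})$ are incomparable, so no $x_m\in W$ witnesses \AX{(tb)}. Only the opposite direction — that every $\beta$-acyclic hypergraph, and hence every induced subhypergraph of one, has a nest point — need be imported.

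The main obstacle is not a computation but calibrating that import correctly: one must use the nest-point characterization for \emph{all} induced subhypergraphs, note that an induced subhypergraph of a totally balanced hypergraph is again totally balanced (a routine lifting of weak $\beta$-cycles), and align the translation lemma with the axiom only in the range $|W|\ge 3$, dispatching the small cases separately. If a fully self-contained treatment is wanted, the sole remaining task is a direct proof that every totally balanced hypergraph has a nest point — obtainable, e.g., by a minimal-counterexample argument or through a doubly lexical $\Gamma$-free ordering — and this is the only genuinely nontrivial ingredient.
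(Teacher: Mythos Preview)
Your proof is correct, and your forward direction (\AX{(tb)} $\Rightarrow$ totally balanced) is essentially identical to the paper's: both pick the witnesses $x_i\in C_i\cap C_{i+1}$ of a putative weak $\beta$-cycle, set $W=\{x_1,\dots,x_n\}$, and observe via monotonicity that $R(x_m,x_{m-1})$ and $R(x_m,x_{m+1})$ are incomparable for every $m$, so no $x_m$ can witness \AX{(tb)}.

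The converse direction, however, is genuinely different. The paper argues directly and self-containedly: assuming \AX{(tb)} fails on some $V'$, it picks $x_1\in V'$ with $R(x_1,x_2)\overlaps R(x_1,x_3)$, uses the absence of $\beta$-cycles to force $x_1\in R(x_2,x_3)$, then iterates from $x_2$ to produce a new point $x_4\notin\{x_1,x_2,x_3\}$, and so on, manufacturing an infinite sequence of distinct points in the finite set $V'$. Your route instead proves a clean \emph{translation lemma}---that \AX{(tb)} on $W$ is precisely the existence of a nest point in $\mathscr{C}_R[W]$---and then invokes the classical nest-point characterization of totally balanced hypergraphs. Your approach is conceptually more transparent (it explains \emph{why} \AX{(tb)} is the right condition: it is exactly the nest-point property, pulled back through $R$), and your translation lemma is of independent interest. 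The paper's approach, on the other hand, is fully elementary and requires no imported structure theorem; its iterative construction is a bit ad~hoc but keeps the lemma self-contained within the transit-function framework.
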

\begin{proof}
  Let $R$ be a monotone transit function satisfying \AX{(tb)}.  Suppose
  that $(C_1, \dots, C_n)$ is a weak $\beta$-cycle in
  $(V,\mathscr{C}_R)$. Consider $x_i\in C_i\cap C_ {i+1}$ for
  $i=1,\dots,n-1$ and $x_n\in C_n\cap C_1$. Then, $R(x_i,x_{i+1})\subseteq
  C_{i+1}$ for $i=1,\dots,n-1$ and $R(x_1,x_n)\subseteq C_1$. Therefore,
  $(R(x_1,x_2), R(x_2,x_3), \dots ,R(x_{n-1},x_n))$ is a weak $\beta$-cycle
  in $(V,\mathscr{C}_R)$. These sets satisfy $R(x_1,x_2)\overlaps
  R(x_2,x_3) \overlaps \dots \overlaps R(x_{n-1},x_n) \overlaps
  R(x_n,x_1)\overlaps R(x_1,x_2)$. Consider the set
  $V'=\{x_1,\dots,x_n\}\subseteq V$. For each $x_i$, therefore, there
  exist $x_j,x_k\in V'$ such that $R(x_j,x_i)\overlaps R(x_i,x_k)$, thus
  the set $V'=\{x_1,\dots,x_n\}$ violates \AX{(tb)}, a contradiction.

  Conversely, suppose that $(V,\mathscr{C}_R)$ does not contain weak
  $\beta$-cycles. Assume, for contradiction that $R$ does not satisfies
  \AX{(tb)}, i.e., there exists $V'\subseteq V$ such that for any $x\in V'$
  there are $a,a'\in V'$ such that $R(x,a) \overlaps R(x,a')$. Let $x_1\in
  V'$. Then there are $x_2,x_3\in V'\setminus \{x_1\}$ such that
  $R(x_1,x_2)\overlaps R(x_1,x_3)$. Since $(V,\mathscr{C}_R)$ does not
  contain weak $\beta$-cycles, we have $x_1\in R(x_2,x_3)$. Therefore,
  $R(x_1,x_2)\cup R(x_1,x_3)\subseteq R(x_2,x_3)$.  Now, for $x_2\in V'$,
  there are distinct points $x_4,x_4'\in V'\setminus \{x_2\}$ such that
  $R(x_2,x_4)\overlaps R(x_2,x_4')$. Since $(V,\mathscr{C}_R)$ does not
  contain weak $\beta$-cycles, we have, $x_4\neq x_3\neq x_4'$. Note
    that only one of $x_4$ and $x_4'$ may coincide with $x_1$. Hence we may
    assume w.l.o.g.\ that $x_4\in V'\setminus
  \{x_1,x_2,x_3\}$. Furthermore, $R(x_1,x_2)\subseteq R(x_3,x_4)$, since
  otherwise $(V,\mathscr{C}_R)$ contains a weak $\beta$-cycle. Now, for
  $x_4\in V'$, there exist distinct $x_5,x_5'\in V'\setminus
  \{x_4\}$ such that $R(x_4,x_5)\overlaps R(x_4,x_5')$. Continuing in this
  manner, we step-by-step obtain an infinite sequence of distinct
    points $x_i \in V'$. This is impossible, however, since $V'$ is
  finite. Therefore, $R$ satisfies \AX{(tb)}.
\end{proof}
We note that Lemma~\ref{lem:hc-totbal} is similar to Prop.3 of
\cite{Brucker:10}, which states that a hypergraph H is totally balanced if
and only if it admits a so-called totally balanced ordering.

\begin{lemma}\label{tb->u3}
  If $R$ is a monotone transit function satisfying \AX{(tb)},
  then $R$ satisfies \AX{(u3)}.
\end{lemma}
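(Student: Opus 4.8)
The plan is to reuse, almost verbatim, the mechanism of the proof of Lemma~\ref{py->u3}, substituting total balancedness for pyramidality. First I would set up a proof by contradiction: assume $R$ satisfies \AX{(tb)} but \AX{(u3)} fails for some $x,y$, so that $R(x,y)\not\subseteq\{x,y\}$ (hence $x\ne y$ and $|R(x,y)|\ge 3$) and, for \emph{every} $z\in R(x,y)\setminus\{x,y\}$, we have $R(x,z)\cup R(z,y)\subsetneq R(x,y)$. By Lemma~\ref{tb->w}, $R$ then satisfies \AX{(w)} (and hence the equivalent form \AX{(w$_3$)}), and by Lemma~\ref{lem:hc-totbal} the hypergraph $(V,\mathscr{C}_R)$ is totally balanced, i.e.\ contains no weak $\beta$-cycle. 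These two facts play the roles that ``$\mathscr{C}_R$ is a weak hierarchy'' and ``$\mathscr{C}_R$ avoids the first forbidden configuration'' play in Lemma~\ref{py->u3}.

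Next I would extract, for each $z\in R(x,y)\setminus\{x,y\}$, the structure used in the iteration. Since $R(x,z)\cup R(z,y)\subsetneq R(x,y)$, choose $z'\in R(x,y)\setminus(R(x,z)\cup R(z,y))$; axioms \AX{(t1)}--\AX{(t2)} force $z'\notin\{x,y,z\}$. One checks $x\notin R(z,y)$: otherwise $x,y\in R(z,y)$ gives $R(x,y)\subseteq R(z,y)$ by \AX{(m)}, while $z,y\in R(x,y)$ gives $R(z,y)\subseteq R(x,y)$, so $R(z,y)=R(x,y)$, contradicting properness; symmetrically $y\notin R(x,z)$. Together with $z\in R(x,z)\cap R(z,y)$ this yields $R(x,z)\overlaps R(z,y)$, and the same reasoning applied to $z'$ gives $x\notin R(z',y)$, $y\notin R(x,z')$, and $R(x,z')\overlaps R(z',y)$.

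The heart of the argument is the dichotomy on whether $z\in R(x,z')\cup R(z',y)$. If $z\notin R(x,z')\cup R(z',y)$, then the four transit sets $R(x,z),R(z,y),R(z',y),R(x,z')$ together with the vertices $z,y,z',x$ form a weak $\beta$-cycle of length $4$: each of these vertices lies in the two cyclically adjacent sets and, by the non-memberships just collected, in neither of the other two. This contradicts total balancedness, so $z\in R(x,z')\cup R(z',y)$. If $z\in R(x,z')\setminus R(z',y)$, then \AX{(w$_3$)} applied to $R(x,z')$ and $R(z',y)$, with $z'\in R(x,z')\cap R(z',y)$, $z\in R(x,z')\setminus R(z',y)$, and $y\in R(z',y)\setminus R(x,z')$, yields $z'\in R(z,y)$, contradicting the choice of $z'$; the case $z\in R(z',y)\setminus R(x,z')$ is symmetric and gives $z'\in R(x,z)$. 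Hence $z\in R(x,z')\cap R(z',y)$, so \AX{(m)} gives $R(x,z)\subseteq R(x,z')$ and $R(z,y)\subseteq R(z',y)$, and the inclusion $R(x,z)\cup R(z,y)\subseteq R(x,z')\cup R(z',y)$ is strict since $z'$ lies in the right-hand union but not the left. Iterating with $z'$ in place of $z$ (legitimate because $z'\in R(x,y)\setminus\{x,y\}$ again violates \AX{(u3)}) produces an infinite strictly increasing chain of subsets of the finite set $R(x,y)$ --- impossible. Thus $R$ satisfies \AX{(u3)}.

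I expect the only real obstacle to be the bookkeeping in the ``weak $\beta$-cycle'' step: one must verify all the non-membership conditions in the definition, and the only ones that are not immediate --- $x\notin R(z,y)$, $y\notin R(x,z)$, and their $z'$-analogues --- are exactly the degenerate cases (one of the two small sets coinciding with $R(x,y)$) that are ruled out by monotonicity together with the standing assumption that the union is a proper subset of $R(x,y)$. Everything else is a direct transcription of the argument in Lemma~\ref{py->u3}.
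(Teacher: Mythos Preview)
Your proposal is correct and follows exactly the approach the paper takes: the paper's own proof simply notes that Lemmas~\ref{lem:hc-totbal} and~\ref{tb->w} supply \AX{(w)} and the absence of the first forbidden configuration, and then reuses the argument of Lemma~\ref{py->u3} verbatim. You have spelled out the details of that reuse (in particular the bookkeeping for the length-$4$ weak $\beta$-cycle) more carefully than the paper does, but the strategy is identical.
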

\begin{proof}
  Lemma \ref{lem:hc-totbal} and Lemma \ref{tb->w} imply that $R$ satisfies
  \AX{(w)} and thus $\mathscr{C}_R$ cannot contain first forbidden
  configuration. Using the same arguments as in the proof of Lemma
  \ref{py->u3}, we conclude that $R$ satisfies \AX{(u3)}.
\end{proof}
Example~\ref{ex:u3-py} shows that the converse is not true.

The definition of weak $\beta$-cycles and the axiom \AX{(w)} suggest to
consider the following axiom for a transit function:
\begin{description}
\item[\AX{(tb')}] If $v_1,\dots,v_n\in V$ where $n\geq 3$ with
  $v_{k-1}\notin R(v_k,v_{k+1}), v_{k+1}\notin R(v_{k-1},v_k)$ for all $k$
  (indices taken modulo $n$), then there exists some $j$ with $1\leq j\leq
  n$ such that $v_j\in R(v_i,v_{i+1})$ for some $i\notin \{j,j-1\}$.
\end{description}
For $n=3$, \AX{(tb')} reduced to \AX{(w)}. A monotone transit function
satisfying \AX{(tb')} thus in particular satisfies \AX{(w)}.

\begin{lemma}
  \label{lem:tb'-totbal}
  Let $R$ be a monotone transit function. Then $R$ satisfies \AX{(tb')} if
  and only if $(V,\mathscr{C}_R)$ is totally balanced.
\end{lemma}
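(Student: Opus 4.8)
The plan is to identify the hypothesis sequence $v_1,\dots,v_n$ of \AX{(tb')} with a (candidate) weak $\beta$-cycle in $(V,\mathscr{C}_R)$: the consecutive transit sets $C_i\coloneqq R(v_i,v_{i+1})$ will play the role of the edges and the points $x_i\coloneqq v_{i+1}$ the role of the distinguished vertices (indices taken $\pmod n$). Axioms \AX{(t1)} and \AX{(t2)} already give $x_i\in C_i\cap C_{i+1}$, so $(C_1,\dots,C_n;x_1,\dots,x_n)$ is a weak $\beta$-cycle precisely when, in addition, $x_i\notin C_k$ for every $k\notin\{i,i+1\}$. Unwinding this last requirement and renaming the index via $j=i+1$, I will observe that it is exactly the negation of the conclusion of \AX{(tb')}, and moreover that it already forces the hypothesis of \AX{(tb')} when $n\ge 3$. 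Thus both the hypothesis and the conclusion of \AX{(tb')} are pinned down by the definition of a weak $\beta$-cycle, which is what makes the equivalence work.

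For ``$(V,\mathscr{C}_R)$ totally balanced $\Rightarrow$ \AX{(tb')}'' I would take $v_1,\dots,v_n$ ($n\ge 3$) satisfying the hypothesis of \AX{(tb')}, assume for contradiction that the conclusion fails --- i.e.\ $v_j\notin R(v_i,v_{i+1})$ whenever $i\notin\{j,j-1\}$ --- and conclude from the paragraph above that the $C_i$ and $x_i$ form a weak $\beta$-cycle, contradicting total balancedness. For the converse, assuming $R$ satisfies \AX{(tb')} and that $(V,\mathscr{C}_R)$ contains a weak $\beta$-cycle $(C_1,\dots,C_n;x_1,\dots,x_n)$ with $n\ge 3$, I would first note that $x_{k-1},x_k\in C_k$ and $x_k,x_{k+1}\in C_{k+1}$, so monotonicity gives $R(x_{k-1},x_k)\subseteq C_k$ and $R(x_k,x_{k+1})\subseteq C_{k+1}$; since $n\ge 3$ makes $k+1\notin\{k-1,k\}$ and $k\notin\{k+1,k+2\}$, the weak $\beta$-cycle conditions give $x_{k-1}\notin C_{k+1}$ and $x_{k+1}\notin C_k$, hence $x_{k-1}\notin R(x_k,x_{k+1})$ and $x_{k+1}\notin R(x_{k-1},x_k)$. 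Therefore $v_k\coloneqq x_k$ satisfies the hypothesis of \AX{(tb')}, so its conclusion must hold: there are $j$ and $i\notin\{j,j-1\}$ with $x_j\in R(x_i,x_{i+1})\subseteq C_{i+1}$. But $x_j\in C_{i+1}$ forces $i+1\in\{j,j+1\}$, i.e.\ $i\in\{j-1,j\}$, a contradiction; so $(V,\mathscr{C}_R)$ is totally balanced.

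The argument is mostly bookkeeping, and the one thing I expect to need care is keeping the two index conventions separate --- the incidence pairs $\{i,i+1\}$ inherent in a weak $\beta$-cycle against the pairs $\{j,j-1\}$ written into \AX{(tb')} --- together with the repeated appeal to $n\ge 3$ to rule out the degenerate coincidence $k-1\equiv k+1\pmod n$. No ingredient beyond \AX{(t1)}, \AX{(t2)}, monotonicity and the definition of a weak $\beta$-cycle is required; the result thus supplies, parallel to Lemma~\ref{lem:hc-totbal}, a second transit-function axiom characterizing total balancedness.
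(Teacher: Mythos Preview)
Your proposal is correct and follows essentially the same approach as the paper: in both directions one identifies the sequence $v_k$ (resp.\ $x_k$) with the distinguished vertices of a weak $\beta$-cycle, uses \AX{(t1)} for the incidences $x_i\in C_i\cap C_{i+1}$, and invokes monotonicity to get $R(x_i,x_{i+1})\subseteq C_{i+1}$ so that the non-incidence conditions of the cycle match the negated conclusion of \AX{(tb')}. Your version is in fact more explicit than the paper's about verifying the hypothesis of \AX{(tb')} from the cycle data and about the $n\ge3$ index checks, but the underlying argument is the same.
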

\begin{proof}
  Let $R$ be a monotone transit function satisfying \AX{(tb')}. Assume that
  $(V,\mathscr{C}_R)$ contains a weak $\beta$-cycle, $(C_1,\dots,C_n)$ for
  $n\geq 3$. Then there exists $x_i\in C_i\cap C_{i+1}$ for all $i$
  (indices taken modulo $n$) such that, no $x_j\in C_k$ for any $k\notin
  \{j,j+1\}$. By \AX{(m)}, we have, $R(x_i,x_{i+1})\subseteq C_{i+1}$ for
  all $i=1,\dots, n-1$ and $R(x_n,x_1)\subseteq C_1$, since $x_i,x_{i+1}\in
  C_{i+1}$ and $x_n,x_1\in C_1$. Therefore, no $x_j\in R(x_i,x_{i+1})$ for
  $i\notin \{j,j-1\}$.  Hence the points $x_1,\dots,x_n$ violates
  \AX{(tb')}, a contradiction. Therefore, $(V,\mathscr{C}_R)$ can
  not contain a weak $\beta$-cycle.
	
  Conversely, assume that $(V,\mathscr{C}_R)$ is totally balanced. Suppose
  $R$ violates \AX{(tb')}. Then there exist points $x_1,\dots ,x_n\in V$
  with $n\geq 3$ such that $x_{k-1}\notin R(x_k,x_{k+1}), x_{k+1}\notin
  R(x_{k-1},x_k)$ for all $k$ (indices taken modulo $n$), and no $x_j$
  holds $x_j\in R(x_i,x_{i+1})$ for some $i\notin \{j,j-1\}$. Then
  $(R(x_1,x_2),\dots ,R(x_{n-1},x_n), R(x_n,x_1))$ is a weak $\beta$-cycle,
  a contradiction.
\end{proof}

A useful property of totally balanced and monotone transit functions is the
following:
\begin{description}
\item[\AX{(tb2)}] For all $\emptyset\ne W\subseteq V$ there exist
  $x,y\in W$ such that $W\subseteq R(x,y)$.
\end{description}
\begin{lemma}
  Let $R$ be a monotone transit function. If $R$ satisfies \AX{(tb)}
  $R$ satisfies \AX{(tb2)}.
  \label{lem:tb2}
\end{lemma}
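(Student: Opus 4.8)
The plan is to reduce everything to the defining property of \AX{(tb)} after disposing of the small cases. First I would observe that for $W$ with $|W|\le 2$ the statement is immediate: if $W=\{x\}$ then $W=\{x\}=R(x,x)$ by \AX{(t3)}, and if $W=\{x,y\}$ then $\{x,y\}\subseteq R(x,y)$ by \AX{(t1)} and \AX{(t2)}. Hence we may assume $|W|\ge 3$, which is precisely the regime in which \AX{(tb)} is stated.

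Next, apply \AX{(tb)} to $W$ to obtain a vertex $x\in W$ such that $R(x,u)$ and $R(x,v)$ are nested for all $u,v\in W$; equivalently, the family $\mathscr{F}\coloneqq\{R(x,u)\mid u\in W\}$ is totally ordered by inclusion. Since $W$ is finite, $\mathscr{F}$ is a finite chain and therefore has a largest element. Choose $y\in W$ with $R(x,u)\subseteq R(x,y)$ for every $u\in W$. (Incidentally $y\ne x$, since otherwise $R(x,u)\subseteq R(x,x)=\{x\}$ would force $u=x$ for all $u\in W$, contradicting $|W|\ge 3$; but this is not needed, as \AX{(tb2)} does not require $x\ne y$.)

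Finally, for each $u\in W$ we have $u\in R(u,x)=R(x,u)$ by \AX{(t1)} and \AX{(t2)}, and hence $u\in R(x,y)$. Thus $W\subseteq R(x,y)$ with $x,y\in W$, which is exactly \AX{(tb2)}. There is essentially no genuine obstacle here: the only two points that need care are that \AX{(tb)} only applies when $|W|\ge 3$ (so the cases $|W|\in\{1,2\}$ must be handled by hand) and that a finite chain has a maximum (which supplies the ``reference point'' $y$). Both are routine.
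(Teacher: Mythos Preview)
Your proof is correct and follows essentially the same route as the paper: handle $|W|\le 2$ directly via \AX{(t1)}, then for $|W|\ge 3$ use \AX{(tb)} to obtain $x\in W$ with $\{R(x,u)\mid u\in W\}$ totally ordered, pick $y$ realizing the maximum of this finite chain, and conclude $W\subseteq R(x,y)$. The only difference is cosmetic: you spell out the finiteness-of-chain argument and the use of \AX{(t2)}/\AX{(t3)} a bit more explicitly than the paper does.
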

\begin{proof}
  If $W=\{x\}$ or $W=\{x,y\}$ then $W\subseteq R(x,y)$ by \AX{(t1)}. It
  therefore suffices to consider $|W|\ge 3$. First assume that $R$
  satisfies \AX{(tb)}. Thus there exists $x\in W$, such that, for all $u,v\in
  W$, either $R(x,u)\subseteq R(x,v)$ or $R(x,v)\subseteq R(x,u)$. Thus
  there is $w\in W$ such $R(x,u)\subseteq R(x,w)$, and thus $u\in R(x,w)$
  for all $u\in W$.  
\end{proof}
The converse is not true. The canonical transit function of the set
system in Fig.\ref{fig:cex1}A satisfies \AX{(tb2)} but violates \AX{(tb)}.

We summarize Lemmas~\ref{lem:hc<->tb}, \ref{lem:hc-totbal}, and
\ref{lem:tb'-totbal} in the following:
\begin{theorem}
  \label{thm:tb-equiv}
  Let $R$ be a monotone transit function on $V$. Then the following
  statements are equivalent:
  \begin{description}
  \item[(i)]   $R$ satisfies \AX{(tb)}
  \item[(ii)]   $R$ satisfies \AX{(hc)}
  \item[(iii)]   $R$ satisfies \AX{(tb')}
  \item[(iv)] $\mathscr{C}_R$ is totally balanced.
  \end{description}
\end{theorem}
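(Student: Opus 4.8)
The plan is to assemble the theorem directly from the three lemmas established above, since each of the required equivalences has already been proved in isolation. First I would note that Lemma~\ref{lem:hc<->tb} gives the equivalence (i)~$\Leftrightarrow$~(ii): a monotone transit function satisfies \AX{(tb)} if and only if it satisfies \AX{(hc)}. Next, Lemma~\ref{lem:hc-totbal} supplies (i)~$\Leftrightarrow$~(iv), namely that \AX{(tb)} holds precisely when $(V,\mathscr{C}_R)$ is totally balanced. Finally, Lemma~\ref{lem:tb'-totbal} gives (iii)~$\Leftrightarrow$~(iv), so that \AX{(tb')} is likewise equivalent to total balancedness of $\mathscr{C}_R$.

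Chaining these, (ii)~$\Leftrightarrow$~(i)~$\Leftrightarrow$~(iv)~$\Leftrightarrow$~(iii), which shows that all four statements are mutually equivalent. No additional argument is required, so I would keep the written proof to one or two sentences that merely cite Lemmas~\ref{lem:hc<->tb}, \ref{lem:hc-totbal}, and \ref{lem:tb'-totbal} and record the resulting chain of equivalences.

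Since there is essentially nothing left to prove, the only point worth checking is bookkeeping: one must verify that the three cited lemmas are stated for an arbitrary monotone transit function on the same ground set $V$, so that their hypotheses align without any extra assumptions creeping in. They are, so this is not a genuine obstacle; the ``hard part'' here was done in proving the individual lemmas (in particular the induction on $|V|$ in Lemma~\ref{lem:hc<->tb} and the finiteness arguments ruling out infinite descending chains in Lemmas~\ref{lem:hc-totbal} and \ref{lem:tb'-totbal}), and the present statement is simply their synthesis.
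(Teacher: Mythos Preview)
Your proposal is correct and matches the paper's approach exactly: the paper introduces Theorem~\ref{thm:tb-equiv} with the sentence ``We summarize Lemmas~\ref{lem:hc<->tb}, \ref{lem:hc-totbal}, and \ref{lem:tb'-totbal} in the following'' and gives no further proof. Your chain (ii)~$\Leftrightarrow$~(i)~$\Leftrightarrow$~(iv)~$\Leftrightarrow$~(iii) via those three lemmas is precisely the intended synthesis.
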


\section{Second and third forbidden configurations}
\label{sect:23}

The second and third forbidden configurations in Eq.\ref{eq:inthypg} can be
understood as constraints on the mutual relationship of three sets $A$,
$B$, and $C$ that overlaps a fourth set $D$. We first consider set systems
$\mathscr{C}$ satisfying the following property:
\begin{description} 
\item[\AX{(P2)}] If $A,B,C,D\in \mathscr{C}$ satisfy $A\overlaps D$,
  $B\overlaps D$, and $C\overlaps D$, then $A\subseteq D\cup B\cup C$, or
  $B\subseteq D\cup A\cup C$, or $C\subseteq D\cup A\cup B$.
\end{description}
Then, $\mathscr{C}$ does not contain the second forbidden configuration in
Eq.\ref{eq:inthypg}. The converse is also true.

\begin{lemma}
  \label{lem:py->p2}
  Every pyramidal set system satisfies \AX{(P2)}.
\end{lemma}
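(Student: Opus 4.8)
The plan is to exploit the interval representation directly. Since a pyramidal set system is in particular pre-pyramidal, fix a total order $<$ on $V$ for which every cluster is an interval, and write $A=[a_1,a_2]$, $B=[b_1,b_2]$, $C=[c_1,c_2]$, $D=[d_1,d_2]$, where for a cluster $X$ we abbreviate $x_1=\min X$ and $x_2=\max X$ (minimum/maximum w.r.t.\ $<$). Closedness of $\mathscr{C}$ plays no role here.

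The first, and really the only substantive, step is a classification of the ways an interval can overlap $D$: if $X\overlaps D$ then either $x_1<d_1\le x_2<d_2$ (call this ``$X$ overhangs $D$ on the left'') or $d_1<x_1\le d_2<x_2$ (``$X$ overhangs $D$ on the right''), and these two cases are mutually exclusive. This follows from a short case analysis on the positions of $x_1$ relative to $d_1$ and of $x_2$ relative to $d_2$: overhanging $D$ on both sides would force $D\subseteq X$, and overhanging on neither side would force $X\subseteq D$, each contradicting $X\overlaps D$; the nonemptiness of $X\cap D$ then supplies the remaining inequality ($d_1\le x_2$ in the left case, $x_1\le d_2$ in the right case).

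Now apply this to $A$, $B$, $C$, each of which overlaps $D$. By the pigeonhole principle two of them overhang $D$ on the same side; since \AX{(P2)} is symmetric under permuting $A,B,C$ we may name these two $A$ and $B$ and assume they overhang on the left, and (relabeling $A\leftrightarrow B$ if necessary) that $a_1\le b_1$. I then claim $B\subseteq A\cup D$, which immediately yields $B\subseteq D\cup A\cup C$ as desired. To verify the claim, take $x\in B$; then $a_1\le b_1\le x\le b_2<d_2$. If $x\le a_2$ then $x\in[a_1,a_2]=A$; otherwise $d_1\le a_2<x<d_2$, so $x\in[d_1,d_2]=D$. The right-overhang case is the mirror image, taking the one of $A,B$ with the larger right endpoint in the role of $A$.

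The argument has no genuine obstacle; the only point requiring care is the bookkeeping of strict versus non-strict inequalities in the overlap classification (equality of endpoints between two of the clusters always collapses to a trivial subcase where one of the two is contained in the other). Since the classification of overlaps with $D$ will presumably be reused in analyzing the third forbidden configuration, I would state it once as a small standalone observation.
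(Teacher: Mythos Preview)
Your proof is correct and takes a genuinely different route from the paper's. The paper does not touch the interval representation directly; instead it first notes that pyramidality gives \AX{(WP)}, and then splits into cases on how $A$, $B$, $C$ intersect \emph{each other}. When $A$, $B$, $C$ are pairwise disjoint it invokes Duchet's betweenness characterization of interval hypergraphs to reach a contradiction; when two of them overlap it squeezes out the conclusion from \AX{(WP)} through further subcases on whether $D\subseteq A\cup B$. Your argument sidesteps all of this by classifying how any interval can overlap the fixed interval $D$ (left overhang vs.\ right overhang) and applying pigeonhole, then checking one order-inequality. This is shorter, fully self-contained, and uses neither \AX{(WP)} nor Duchet's theorem; the paper's route, by contrast, has the expository virtue of showing \AX{(P2)} as a consequence of the more abstract properties already established. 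Your remark that closedness plays no role (only pre-pyramidality is needed) is also a sharpening of what the paper actually states.
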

\begin{proof}
  Consider $A,B,C,D\in\mathscr{C}$ such that $A\overlaps D$, $B\overlaps
  D$, and $C\overlaps D$. Since $R$ is pyramidal, $\mathscr{C}_R$ satisfies
  \AX{(WP)}.

  To this end, we first consider the case that one of the sets $A$, $B$,
  and $C$ is contained in one of the others. Then, trivially, at least one
  of the statements $A\subseteq D\cup B\cup C$, $B\subseteq D\cup A\cup C$,
  or $C\subseteq D\cup A\cup B$ is true. In the following, we, therefore,
  assume that none of the sets $A$, $B$, $C$ is contained in one of the
  others. First, consider the case that $A,B,C$ are pairwisely
  disjoint. Then there are points $x\in A\setminus D$, $y\in B\setminus D$
  and $z\in C\setminus D$. Thus $(A,D,C)$ is an $xz$-hyper-path not
  containing $y$, $(A,D,B)$ is an $xy$ hyper-path not containing $z$ and
  $(B,D,C)$ is an $yz$ hyper-path not containing $x$. This contradicts
  Duchets's characterization of interval hypergraphs and, thus, the
  assumption that $\mathscr{C}_R$ is pyramidal.

  Next, we assume that two sets overlap. W.l.o.g., suppose $A\overlaps
  B$. Obviously, we have either $D\nsubseteq A\cup B$ or $D\subset A\cup
  B$. In the first case, if $A\nsubseteq D\cup B$ and $B\nsubseteq A\cup
  D$, then $A,D,B$ violates \AX{(WP)}. Therefore, $A\subseteq D\cup B$ or
  $B\subseteq A\cup D$ must hold, leading to \AX{(P2)}.  In the second
  case, $D\overlaps C$ implies $\emptyset \ne D\cap C\subset A\cup
  B$. Therefore, $A,D,C$ violates \AX{(WP)} if $D\cap C\subset A$ and
  $B,D,C$ violates \AX{(WP)} if $D\cap C\subset B$; hence $D\cap C$ is not
  contained in $A$ or $B$ and thus $A,B,C$ violates \AX{(WP)} whenever
  $C\setminus (A\cup B)\neq \emptyset$. Thus we conclude that $C\subseteq
  A\cup B$ and thus also $C\subseteq D\cup A\cup B$. Hence the lemma.
\end{proof}

\begin{lemma}
  A set system $\mathscr{C}$ satisfying \AX{(L1)} also satisfies \AX{(P2)}.
\end{lemma}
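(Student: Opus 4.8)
The plan is to derive \AX{(P2)} from \AX{(L1)} with only two applications of the latter, after first disposing of a trivial case. To begin, I would handle the situation in which two of the three sets $A$, $B$, $C$ are nested: if $X\subseteq Y$ for some $X,Y\in\{A,B,C\}$, then $X\subseteq Y$ is contained in $D$ together with the other two sets, so the conclusion of \AX{(P2)} already holds. Hence I may assume throughout that no two of $A$, $B$, $C$ contain one another; in particular no two of them are $\subseteq$-comparable.

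Next I would apply \AX{(L1)} to the triple in which $D$ occupies the middle position: from $A\overlaps D$ and $D\overlaps B$, and since $A\not\subseteq B$ and $B\not\subseteq A$, alternatives (i) and (ii) of \AX{(L1)} are ruled out, so alternative (iii) holds, i.e.\ $A\cap B\subseteq D\subseteq A\cup B$; in particular $D\subseteq A\cup B$. Because $C\overlaps D$ we have $\emptyset\ne C\cap D\subseteq C\cap(A\cup B)=(C\cap A)\cup(C\cap B)$, so $C$ meets at least one of $A$ and $B$. Using the symmetry of \AX{(P2)} in $A$ and $B$, assume without loss of generality $C\cap A\ne\emptyset$; since $A$ and $C$ are not nested, this yields $A\overlaps C$. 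A second application of \AX{(L1)}, now to the triple with $C$ in the middle, uses $A\overlaps C$ and $C\overlaps D$: here the alternatives $A\subseteq D$ and $D\subseteq A$ both contradict $A\overlaps D$, so we must have $A\cap D\subseteq C\subseteq A\cup D$. Then $C\subseteq A\cup D\subseteq D\cup A\cup B$, which is one of the three disjuncts of \AX{(P2)}.

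I do not anticipate a real obstacle here; the only point requiring care is that, unlike \AX{(WP)}, the axiom \AX{(L1)} applies to a chain of two overlaps without demanding that the outer sets intersect, and this is precisely what makes the final step go through even when $C\cap B=\emptyset$. (One could instead apply \AX{(L1)} to all three triples $(A,D,B)$, $(A,D,C)$, $(B,D,C)$ to obtain $D=(A\cap B)\cup(A\cap C)\cup(B\cap C)$ and then read off the forced overlap directly, but the two-step argument above is shorter and avoids the extra bookkeeping.)
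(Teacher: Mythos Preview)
Your proof is correct and follows essentially the same strategy as the paper's: dispose of the nested case, use \AX{(L1)} (with $D$ in the middle) to locate an overlap among $A,B,C$, and then apply \AX{(L1)} once more to conclude. The paper takes the longer route you sketch in your parenthetical---applying \AX{(L1)} to all three triples $(A,D,B)$, $(A,D,C)$, $(B,D,C)$ to obtain $D=(A\cap B)\cup(A\cap C)\cup(B\cap C)$, then extracting a nonempty pairwise intersection---whereas your two-application argument gets there more directly; both arrive at the same final \AX{(L1)} step.
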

\begin{proof}
  Let $A, B, C, D \in\mathscr{C}$ such that $A \overlaps D$, $B \overlaps
  D$, and $C\overlaps D$. The assertion of \AX{(P2)} is trivally true if
  one of the three sets $A$, $B$, and $C$ is contained in another ones. If
  this is not the case, \AX{(L1)} implies $A\cap B \subseteq D \subseteq
  A\cup B$, $A\cap C \subseteq D \subseteq A\cup C$, and $B\cap C \subseteq
  D \subseteq B\cup C$, and thus $(A \cap B) \cup (A\cap C) \cup (B\cap C)
  \subseteq D \subseteq (A \cup B) \cap (A\cup C) \cap (B\cup C)$. Using
  that intersection is distributive over union and union is distributive
  over intersection shows that $D=(A \cap B) \cup (A\cap C) \cup (B\cap
  C)$. Since $A\overlaps D$ in particular implies $D\ne\emptyset$, at least
  one of the three intersections is non-empty.  Assume, w.l.o.g., $A\cap
  B\ne\emptyset$. As noted above, the assertion of \AX{(P2)} holds for
  $A\subseteq B$ or $B\subseteq A$. It remains to consider the case
  $A\overlaps B$. This assumption, together with $A\overlaps D$ and
  \AX{(L1)}, implies $D\cap B \subseteq A \subseteq D\cup B \subseteq D\cup
  B\cup C$, i.e., the assertion of \AX{(P2)} also holds in this case.
\end{proof}
Axiom \AX{(P2)} can be translated into a transit axiom as follows:
\begin{description} 
\item[\AX{(p2)}] If $R(x,y)\overlaps R(s,t)$, $R(u,v)\overlaps R(s,t)$ and
  $R(p,q)\overlaps R(s,t)$ then $R(x,y)\subseteq R(u,v)\cup R(p,q)\cup
  R(s,t)$, or $R(u,v)\subseteq R(x,y)\cup R(p,q)\cup R(s,t)$, or
  $R(p,q)\subseteq R(x,y)\cup R(u,v)\cup R(s,t)$.
\end{description}

\begin{corollary}
  \label{cor:py->p2}
  Every pyramidal transit function satisfies \AX{(p2)}.
\end{corollary}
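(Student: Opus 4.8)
The plan is to reduce the statement immediately to Lemma~\ref{lem:py->p2}, which has already been proved, via the same trivial translation between set-system axioms and transit-function axioms that underlies the ``observation'' facts relating \AX{(I)}/\AX{(i)} and \AX{(WP)}/\AX{(wp)} earlier in the paper. First I would record that, since $\mathscr{C}_R=\{R(x,y)\mid x,y\in V\}$, every transit set is a member of $\mathscr{C}_R$ and, conversely, every member of $\mathscr{C}_R$ is of the form $R(x,y)$ for suitable $x,y$. Consequently, the universally quantified variables appearing in \AX{(p2)} range over exactly the edge set $\mathscr{C}_R$, so \AX{(p2)} for $R$ is literally the same statement as \AX{(P2)} for $\mathscr{C}_R$, with $R(x,y)$, $R(u,v)$, $R(p,q)$, $R(s,t)$ playing the roles of $A$, $B$, $C$, $D$.

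With that in hand I would argue as follows. Let $R$ be a pyramidal transit function; by definition $\mathscr{C}_R$ is then a pyramidal clustering system, so Lemma~\ref{lem:py->p2} gives that $\mathscr{C}_R$ satisfies \AX{(P2)}. Now take transit sets $R(x,y)$, $R(u,v)$, $R(p,q)$, $R(s,t)$ with $R(x,y)\overlaps R(s,t)$, $R(u,v)\overlaps R(s,t)$ and $R(p,q)\overlaps R(s,t)$, and set $A:=R(x,y)$, $B:=R(u,v)$, $C:=R(p,q)$, $D:=R(s,t)$. These are four (not necessarily distinct) members of $\mathscr{C}_R$ satisfying $A\overlaps D$, $B\overlaps D$, $C\overlaps D$, so \AX{(P2)} yields $A\subseteq D\cup B\cup C$, or $B\subseteq D\cup A\cup C$, or $C\subseteq D\cup A\cup B$; rewriting in terms of the original transit sets, this is exactly the conclusion of \AX{(p2)}.

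I do not expect any obstacle here: the only substantive content is Lemma~\ref{lem:py->p2} (which rests on Duchet's betweenness characterization of interval hypergraphs and on the \AX{(WP)} axiom), and the corollary is a routine transcription. The single point worth making explicit in the write-up is the quantifier remark above, guaranteeing that passing from \AX{(P2)} on $\mathscr{C}_R$ to \AX{(p2)} on $R$ loses no generality.
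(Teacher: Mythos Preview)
Your proposal is correct and matches the paper's intended argument: the corollary is stated without proof immediately after Lemma~\ref{lem:py->p2}, relying on exactly the routine translation between \AX{(P2)} for $\mathscr{C}_R$ and \AX{(p2)} for $R$ that you spell out. The quantifier remark you make explicit is the only point needed, and the paper treats it as obvious.
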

The converse is not true, however.  \AX{(p2)}, even together with \AX{(tb)}
and \AX{(wp)}, does not imply pyramidal:	
\begin{example}\label{ex:p2-p3}
  Let $R$ on $V=\{a,b,c,d,e\}$ be defined by $R(a,b)=\{a,b\}$,
  $R(b,c)=R(c,d)=\{b,c,d\}$, $R(b,d)=\{b,d\}$, $R(d,e)=\{d,e\}$,
  $R(a,d)=R(a,e)=R(b,e)=\{a,b,d,e\}$ and all other sets are singletons or
  $V$. $R$ satisfies \AX{(m)}, \AX{(wp)}, \AX{(tb)}, and \AX{(p2)}.  Here
  $R(a,b)\overlaps R(c,d)$, $R(d,e)\overlaps R(c,d)$, $R(a,d)\overlaps
  R(c,d)$.  That is, $\mathscr{C}_R$ contains a third forbidden configuration
  in Eq.\ref{eq:inthypg}. Therefore, $\mathscr{C}_R$ is not pyramidal. See
  Fig.\ref{fig:p234}A.
\end{example}

Example \ref{w+wp+u3+hc-py} shows that neither \AX{(w)} nor \AX{(wp)}
implies \AX{(p2)}. The transit function in Fig.~\ref{fig:cex1}C satisfies
\AX{(p2)} but violates \AX{(wp)}. In Example~\ref{wp-hc}, $R$ satisfies
\AX{(p2)}, but the points $c$, $e$, and $f$ violate \AX{(w)}.  Hence,
\AX{(w)}, \AX{(wp)}, and \AX{(p2)} are mutually independent. Also,
\AX{(wpy)} and \AX{(p2)} are independent. Moreover, the transit function
$R$ in Example~\ref{wp-hc} violates \AX{(tb)} as $\mathscr{C}_R$ contains
the pure-cycle $(R(c,f),R(c,e),R(e,f))$, and Example \ref{w+wp+u3+hc-py}
shows that \AX{(tb)} does not imply \AX{(p2)}; thus properties \AX{(tb)}
and \AX{(p2)} are independent.

Let us now turn to the third forbidden configuration: 
\begin{description}
\item[\AX{(P3)}] If $A\overlaps B$, $B\overlaps C$, $B\overlaps D$, and 
  $A\cup C\subseteq D$, then $A\cap C\neq \emptyset$.
 \item[\AX{(P3')}] If $A\overlaps B$, $B\overlaps C$, $A\cap
   C=\emptyset$, and $A\cup C\subseteq D$, then $B\subseteq D$.
\end{description}

Axioms \AX{(P3)} and \AX{(P3')} by design rule out the third forbidden
configuration in Eq.\ref{eq:inthypg}. The two formulations are equivalent.
To see this, consider four sets $A,B,C,D\subseteq V$ and assume $A\overlaps
B$, $B\overlaps C$, and $A\cup C\subseteq D$. Under these assumptions, we
have $B\cap D\ne\emptyset$ and thus $B\overlaps D$ is equivalent to
$B\not\subseteq D$. The statement ``$B\overlaps D$ implies $A\cap C\neq
\emptyset$'' thus is equivalent to the contra-positive of ``$A\cap
C=\emptyset$ implies $B\subseteq D$''.
\begin{lemma} 
  \label{lem:py->p3}
  Every pyramidal set system $\mathscr{C}$ satisfies \AX{(P3)}.
\end{lemma}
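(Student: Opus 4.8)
The plan is to work with the equivalent formulation \AX{(P3')} established in the paragraph preceding the lemma, and to argue directly from the linear order witnessing pre-pyramidality rather than via Duchet's betweenness characterization. Fix a total order $<$ on $V$ such that every $C\in\mathscr{C}$ is an interval, and write each cluster $C$ as $[\min C,\max C]$. Suppose $A\overlaps B$, $B\overlaps C$, $A\cap C=\emptyset$, and $A\cup C\subseteq D$; the goal is to show $B\subseteq D$. Write $A=[a_1,a_2]$, $B=[b_1,b_2]$, $C=[c_1,c_2]$. Since $A$ and $C$ are disjoint intervals, one lies entirely below the other; w.l.o.g.\ assume $a_2<c_1$ (the other case is symmetric). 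Because $D$ is an interval containing $a_1\in A$ and $c_2\in C$ with $a_1<c_2$, we get $[a_1,c_2]\subseteq D$.

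Next I would extract the inequalities forced by the two overlap relations. Picking $x\in A\cap B$ gives $b_1\le x\le a_2$, hence $b_1\le a_2$; picking $y\in C\cap B$ gives $c_1\le y\le b_2$, hence $b_2\ge c_1$. It then remains to bound $B$ from the outside, i.e.\ to show $a_1\le b_1$ and $b_2\le c_2$. If $b_1<a_1$, then together with $b_2\ge c_1>a_2$ one has $[a_1,a_2]\subseteq[b_1,b_2]=B$, i.e.\ $A\subseteq B$, contradicting $A\setminus B\ne\emptyset$ from $A\overlaps B$. Symmetrically, if $b_2>c_2$, then together with $b_1\le a_2<c_1$ one has $[c_1,c_2]\subseteq[b_1,b_2]=B$, i.e.\ $C\subseteq B$, contradicting $C\setminus B\ne\emptyset$ from $B\overlaps C$. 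Hence $B=[b_1,b_2]\subseteq[a_1,c_2]\subseteq D$, which is \AX{(P3')}, and therefore \AX{(P3)} holds.

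I do not anticipate a genuine obstacle: once the clusters are viewed as intervals of a fixed order, the statement reduces to a short case analysis, and closedness of $\mathscr{C}$ is not even used — pre-pyramidality suffices. The only points needing a little care are the bookkeeping with the three parts of each overlap relation (deciding which part of $X\overlaps Y$ yields which inequality) and the w.l.o.g.\ reduction on the relative position of $A$ and $C$; both are routine.
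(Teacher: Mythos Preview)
Your proof is correct and follows essentially the same route as the paper's: both fix the linear order, observe that $B$ sits between the disjoint intervals $A$ and $C$, and then use that $D$ is itself an interval containing the outer endpoints to conclude $B\subseteq D$. The paper phrases this as a proof by contradiction of \AX{(P3)} and is terser (asserting without detail that the endpoints of $A\cup B\cup C$ lie in $A\setminus B$ and $C\setminus B$), whereas you work directly via the equivalent \AX{(P3')} and spell out the endpoint inequalities; your observation that only pre-pyramidality is needed is also implicit in the paper's argument.
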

\begin{proof} 
  Let $\mathscr{C}$ be pyramidal. First, suppose \AX{(P3)} does not hold,
  i.e., there exist four sets $A$, $B$, $C$, $D$ that are intervals such
  that $A\overlaps B$, $B\overlaps C$, $B\overlaps D$ and $A\cup C\subseteq
  D$, but $A\cap C=\emptyset$. Then $B$ is located between $A$ and $C$,
  i.e., $A\cup B\cup C$ is an interval whose endpoints are located in
  $A\setminus B$ and $C\setminus B$, respectively, and therefore in
  $D$. Using again that $\mathscr{C}$ is pyramidal, we obtain $A\cup B\cup
  C\subseteq D$, contradicting $B\overlaps D$.
\end{proof}

\begin{figure}[t]
  \includegraphics[width=0.9\textwidth]{./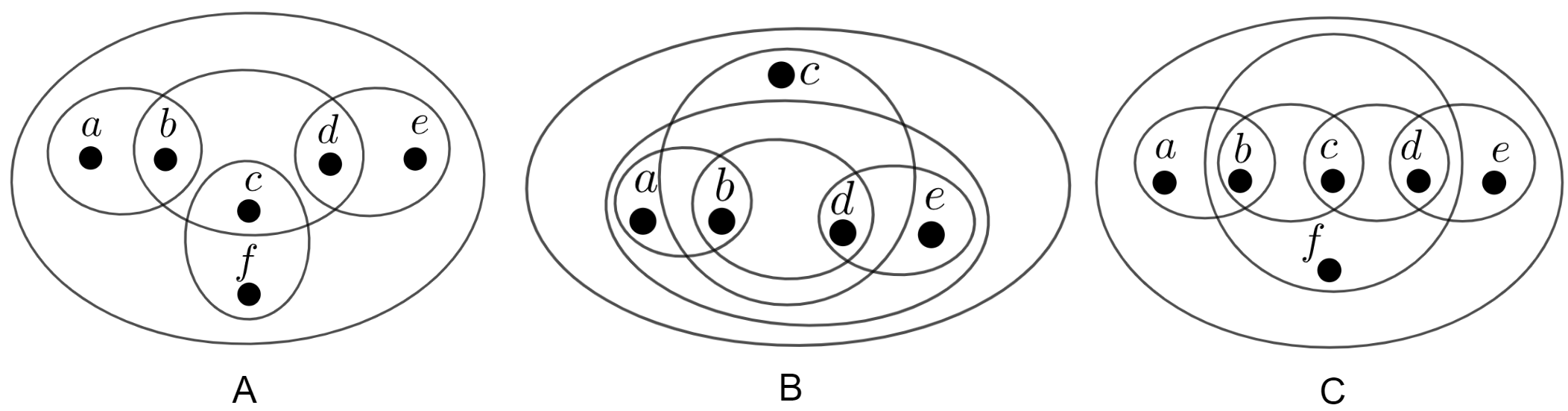}
  \caption{A, B, and C are set systems corresponding to the monotone
    transit functions in Examples~ \ref{w+wp+u3+hc-py} ,\; \ref{ex:p2-p3} \;
    and\; \ref{ex:p3-p4}\; respectively.}
  \label{fig:p234}
\end{figure}

\begin{lemma}
  If a set system $\mathscr{C}$ satisfies \AX{(L1)}, then it also satisfies
  \AX{(P3)}.
\end{lemma}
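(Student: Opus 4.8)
The plan is to prove the statement by contradiction, using a single application of \AX{(L1)} to the triple $A,B,C$. I would start by assuming all the hypotheses of \AX{(P3)}, namely $A\overlaps B$, $B\overlaps C$, $B\overlaps D$, and $A\cup C\subseteq D$, and then suppose for contradiction that $A\cap C=\emptyset$.

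The first small observation I would make is that the overlap hypotheses force $A$ and $C$ to be non-empty, since $A\cap B\ne\emptyset$ and $B\cap C\ne\emptyset$. Now I would invoke \AX{(L1)} on $A,B,C$, which is legitimate precisely because $A\overlaps B$ and $B\overlaps C$. Of the three alternatives it offers, (i) $A\subseteq C$ and (ii) $C\subseteq A$ are both ruled out immediately: combined with $A\cap C=\emptyset$ either one would force $A=\emptyset$ or $C=\emptyset$. Hence alternative (iii) must hold, i.e.\ $A\cap C\subseteq B\subseteq A\cup C$; in particular $B\subseteq A\cup C$. Chaining this with $A\cup C\subseteq D$ yields $B\subseteq D$, which contradicts $B\overlaps D$ (as the latter requires $B\setminus D\ne\emptyset$). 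This contradiction gives $A\cap C\ne\emptyset$, establishing \AX{(P3)}.

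I do not expect a genuine obstacle here; the only point requiring any care is the remark that the overlap hypotheses make $A$ and $C$ non-empty, which is exactly what eliminates the two nesting cases of \AX{(L1)} and isolates case (iii). As an alternative route, one could instead verify \AX{(P3')} and appeal to the equivalence of \AX{(P3)} and \AX{(P3')} established in the paragraph preceding the lemma, but the direct argument via \AX{(L1)} on $A,B,C$ appears to be the shortest.
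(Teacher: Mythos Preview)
Your proposal is correct and essentially identical to the paper's proof: both apply \AX{(L1)} to the triple $A,B,C$ and analyze its three alternatives, with case~(iii) yielding $B\subseteq A\cup C\subseteq D$ in contradiction to $B\overlaps D$. The only cosmetic difference is that the paper argues directly (ruling out~(iii), then concluding $A\cap C\in\{A,C\}\ne\emptyset$ from~(i)/(ii)), whereas you phrase it as a proof by contradiction; the content is the same.
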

\begin{proof}
  Let $A, B, C, D\in \mathscr{C}$ such that $A\overlaps B$, $B\overlaps C$, $B\overlaps D$, and $A\cup C\subseteq D$. Since \AX{(L1)}
  holds, $A\overlaps B$ and $B\overlaps C$ together implies
  $A\subseteq C$, or $C\subseteq A$, or $A\cap C\subseteq B\subseteq A \cup
  C$.  The third alternative cannot occur because it yields $B\subseteq
  A\cup C\subseteq D$ and thus contradicts $B\overlaps D$. In the first two
  cases we have $A\cap C=A$ or $A\cap C=C$, and thus $A\cap C\ne\emptyset$,
  i.e., \AX{(P3)} holds.
\end{proof}

Like \AX{(P2)}, axiom \AX{(P3)} can be translated
directly to transit functions:
\begin{description} 
\item[\AX{(p3)}] If $R(x,y)\overlaps R(u,v)$, $R(x,y)\overlaps R(p,q)$,
  $R(x,y)\overlaps R(s,t)$ where $R(p,q)\subseteq R(u,v)$ and
  $R(s,t)\subseteq R(u,v)$, then $R(p,q)\cap R(s,t)\neq \emptyset$.
\end{description}
\begin{corollary}
	\label{cor:py->p3}
  Every pyramidal transit function satisfies \AX{(p3)}.
\end{corollary}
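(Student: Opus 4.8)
The plan is to obtain this immediately from Lemma~\ref{lem:py->p3} by passing to the hypergraph of transit sets. If $R$ is a pyramidal transit function, then by definition $\mathscr{C}_R$ is a pyramidal clustering system, hence in particular a pyramidal set system, so Lemma~\ref{lem:py->p3} applies and $\mathscr{C}_R$ satisfies \AX{(P3)}. What remains is the routine verification that, restricted to sets of the form $R(\cdot,\cdot)\in\mathscr{C}_R$, the property \AX{(P3)} says exactly what \AX{(p3)} says.

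Concretely, I would match the four sets as $B=R(x,y)$, $D=R(u,v)$, $A=R(p,q)$, and $C=R(s,t)$. Under this identification the three overlap hypotheses of \AX{(p3)} --- namely $R(x,y)\overlaps R(u,v)$, $R(x,y)\overlaps R(p,q)$ and $R(x,y)\overlaps R(s,t)$ --- are the hypotheses $B\overlaps D$, $A\overlaps B$ and $B\overlaps C$ of \AX{(P3)}; and $R(p,q)\subseteq R(u,v)$ together with $R(s,t)\subseteq R(u,v)$ gives $A\cup C\subseteq D$, the last hypothesis of \AX{(P3)}. The conclusion $A\cap C\neq\emptyset$ furnished by \AX{(P3)} is then precisely $R(p,q)\cap R(s,t)\neq\emptyset$, the conclusion of \AX{(p3)}. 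Hence $R$ satisfies \AX{(p3)}.

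I do not expect any genuine obstacle; the only point that needs a moment's attention is the bookkeeping in the translation, in particular that it is $R(x,y)$ --- the set required to overlap the other three --- that plays the role of the ``middle'' set $B$, while $R(u,v)$ plays the role of the large set $D$ containing $R(p,q)\cup R(s,t)$. This is the same pattern used to deduce Corollary~\ref{cor:py->p2} from Lemma~\ref{lem:py->p2}.
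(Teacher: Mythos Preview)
Your proposal is correct and matches the paper's approach: the corollary is stated immediately after the definition of \AX{(p3)} as the transit-function translation of \AX{(P3)}, and is understood to follow directly from Lemma~\ref{lem:py->p3} by exactly the identification $B=R(x,y)$, $D=R(u,v)$, $A=R(p,q)$, $C=R(s,t)$ that you spell out. Your translation and bookkeeping are accurate.
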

Example \ref{ex:p3-p4} shows that the converse is not true even if $R$
satisfies \AX{(tb)}, \AX{(wp)}, and \AX{(p3)}.
\begin{example}
  \label{ex:p3-p4}
  Let $R$ on $V=\{a,b,c,d,e,f\}$ be defined by $R(a,b)=\{a,b\}$,
  $R(b,c)=\{b,c\}$, $R(c,d)=\{c,d\}$,
  $R(b,d)=R(b,f)=R(c,f)=R(d,f)=\{b,c,d,f\}$, $R(d,e)=\{d,e\}$, and all
  other sets are either a singleton or $V$. $R$ is monotone, $R$ satisfies
  \AX{(tb)}, \AX{(wp)}, and \AX{(p3)}. However, $\mathscr{C}_R$ contains
  the fourth forbidden configuration Eq.\ref{eq:inthypg}, and thus $R$ is
  not pyramidal. See Fig. \ref{fig:p234}B.
\end{example}

From Example~\ref{ex:p2-p3}, we see that \AX{(p2)} does not imply
\AX{(p3)}. Example \ref{w+wp+u3+hc-py} (See Fig. \ref{fig:p234}C) shows
that \AX{(p3)} does not imply \AX{(p2)}. The transit function in
Fig~\ref{fig:cex1}A satisfies \AX{(p2)} and \AX{(p3)} but violates
\AX{(u3)}. $R$ in Example~\ref{w+wp+u3+hc-py} holds \AX{(u3)} and \AX{(p3)}
but violates \AX{(p2)}, and $R$ in Example~\ref{ex:p2-p3} holds \AX{(u3)}
and \AX{(p2)} but violates \AX{(p3)}. Therefore, \AX{(u3)} is independent
of \AX{(p2)} and \AX{(p3)}. Example~\ref{ex:p2-p3} and Fig.~\ref{fig:cex1}A
show that \AX{(p3)} and \AX{(tb)} are independent.  Moreover, \AX{(p3)}
implies neither \AX{(w)} by Fig.~\ref{fig:cex1}B nor \AX{(wp)} by
Fig.~\ref{fig:cex1}C. Also, \AX{(w)} and \AX{(wp)} need not imply
\AX{(p3)}. Therefore, axioms \AX{(tb)}, \AX{(p3)}, and \AX{(wp)} are
mutually independent. Similarly, \AX{(p2)}, \AX{(tb)}, \AX{(p3)} are
  mutually independent. Example~\ref{wp-hc} satisfies \AX{(w)}, \AX{(p2)},
\AX{(p3)}, \AX{(wp)} but violates \AX{(tb)}, and Fig.~\ref{fig:cex1}C
satisfies \AX{(w)}, \AX{(p2)}, \AX{(p3)}, \AX{(tb)} but violates \AX{(wp)}.
Moreover, the transit function in Fig~\ref{fig:cex1}C satisfies \AX{(p2)}
and \AX{(p3)} but violates \AX{(i)}. $R$ in Example~\ref{w+wp+u3+hc-py}
satisfies \AX{(i)} and \AX{(p3)} but violates \AX{(p2)}, and $R$ in
Example~\ref{ex:p2-p3} satisfies \AX{(i)} and \AX{(p2)} but violates
\AX{(p3)}. Therefore, \AX{(i)} is independent of \AX{(p2)} and \AX{(p3)}.
The following example, finally, shows that \AX{(p2)}, \AX{(p3)} ensure
neither \AX{(w)} nor \AX{(wp)}.
\begin{example}
  \label{ex:p2p3-wpy}
  Let $R$ on $V=\{a,b,c,d,e,f,g\}$ be defined by
  $R(a,b)=R(a,e)=R(b,e)=\{a,b,e\}$, $R(b,c)=R(b,f)=R(c,f)=\{b,c,f\}$,
  $R(b,d)=\{b,d\}$, $R(d,e)=\{d,e\}$, $R(a,c)=R(a,d)=R(d,c)=\{a,c,d\}$ and
  all other sets are singletons or $V$. $R$ satisfies \AX{(m)}, \AX{(p2)},
  and \AX{(p3)} but violates \AX{(w)} and \AX{(wp)}.
\end{example}

We finally note that \AX{(i)} is independent of \AX{(tb)}, \AX{(p2)}
  and \AX{(p3)}: The transit function in Fig~\ref{fig:cex1}C satisfies
\AX{(tb)}, \AX{(p2)} and \AX{(p3)} But violates \AX{(i)}. $R$ in
Example~\ref{w+wp+u3+hc-py} holds \AX{(i)} and \AX{(p3)} but violates
\AX{(p2)}, and $R$ in Example~\ref{ex:p2-p3} holds \AX{(i)} and \AX{(p2)}
but violates \AX{(p3)}. The transit function in Fig~\ref{fig:cex1}A
satisfies \AX{(i)} but violates \AX{(tb)}.

\section{Fourth and Fifth forbidden configurations}
\label{sect:45}

Let us now turn to the fourth and fifth forbidden configurations. We first
note that if $R$ satisfies \AX{(k)} in addition, then $\mathscr{C}_R$ is
closed. In this case, a fifth forbidden configuration in
Eq.\ref{eq:inthypg} also contains a fourth forbidden configuration as a
subhypergraph.  Therefore, it suffices to rule out the first four forbidden
configurations to ensure that a monotonous transit function satisfying
\AX{(k)} is pyramidal. This is in particularly the case for monotonous
transit functions satisfying \AX{(w)}. To address the fourth forbidden
configuration, we consider the following property:
\begin{description}
\item[\AX{(p4)}] If $u,v,y, x_1,\dots,x_n\in V$ for $n\ge 3$ satisfy
  \begin{itemize}
  \item[(i)] $x_i\notin R(x_{j},x_{j+1})$ for all $i$ and 
      $j\notin \{i-1,i\}$,
  \item[(ii)] $R(u,v)\cap R(x_1,x_2)\neq \emptyset$ and
      $R(u,v)\cap R(x_{n-1},x_n)\neq \emptyset$, and
  \item[(iii)]  $y\in R(u,v)\setminus R(x_i,x_{i+1})$ for all $i$,
  \end{itemize}
  then $R(x_1,x_2)\subseteq R(u,v)$ or $R(x_{n-1},x_n)\subseteq R(u,v)$.
\end{description}
It is not difficult to check that the system of transit sets
$\mathscr{C}_R$ of a monotone transit function $R$ satisfying \AX{(p4)} can
not contain the fourth forbidden configuration in Eq.\ref{eq:inthypg} for
any number $n$ of ``small sets''. The converse is also true.

\begin{lemma}
  If a monotone transit function satisfies \AX{(l1)}, then it also
  satisfies \AX{(p4)}.
\end{lemma}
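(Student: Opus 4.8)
The plan is to derive \AX{(p4)} directly from \AX{(l1)} by a short case analysis, using only elementary set theory together with the combinatorial information carried by hypothesis~(i). Write $B_i\coloneqq R(x_i,x_{i+1})$ for $1\le i\le n-1$ and $D\coloneqq R(u,v)$. From \AX{(t1)} and \AX{(t2)} we have $x_i,x_{i+1}\in B_i$, while hypothesis~(i) says precisely that $x_m\notin B_j$ whenever $m\notin\{j,j+1\}$; in other words $B_j\cap\{x_1,\dots,x_n\}=\{x_j,x_{j+1}\}$. Hypotheses~(ii) and (iii) give $D\cap B_1\ne\emptyset$, $D\cap B_{n-1}\ne\emptyset$, and $y\in D\setminus B_i$ for every $i$, so in particular $D\not\subseteq B_i$ for all $i$.

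The key step I would prove first is that $D$ overlaps at most one of $B_1,\dots,B_{n-1}$. Suppose instead $D\overlaps B_i$ and $D\overlaps B_k$ with $i<k$. Then $B_i\overlaps D\overlaps B_k$, so \AX{(l1)} applied with $D$ in the \emph{middle} position forces one of the three alternatives: $B_i\subseteq B_k$, or $B_k\subseteq B_i$, or $B_i\cap B_k\subseteq D\subseteq B_i\cup B_k$. The first fails because $x_i\in B_i\setminus B_k$ (here $i\notin\{k,k+1\}$ since $i<k$); the second fails because $x_{k+1}\in B_k\setminus B_i$ (here $k+1\notin\{i,i+1\}$ since $k\ge i+1$); and the third fails because it would give $y\in D\subseteq B_i\cup B_k$, contradicting $y\notin B_i$ and $y\notin B_k$. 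Hence no two of the $B_i$ can overlap $D$.

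To conclude, I would note that $B_1\ne B_{n-1}$, since $x_1\in B_1\setminus B_{n-1}$ (as $n\ge 3$ forces $1\notin\{n-1,n\}$), and that $D$ meets each of $B_1$ and $B_{n-1}$. If $D$ does not overlap $B_1$, then $D\cap B_1\ne\emptyset$ and $D\not\subseteq B_1$ leave only $B_1\subseteq D$. Otherwise $D$ overlaps $B_1$; by the key step it then overlaps no other $B_i$, in particular not $B_{n-1}$, and $D\cap B_{n-1}\ne\emptyset$ together with $D\not\subseteq B_{n-1}$ gives $B_{n-1}\subseteq D$. Either way $R(x_1,x_2)\subseteq R(u,v)$ or $R(x_{n-1},x_n)\subseteq R(u,v)$, which is \AX{(p4)}.

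I do not anticipate a real obstacle; the one point that requires care is the choice in the key step to feed \AX{(l1)} the triple $(B_i,D,B_k)$ with the \emph{large} set $D$ sandwiched between the two small sets — this is exactly what makes all three conclusions of \AX{(l1)} untenable simultaneously. Monotonicity of $R$ is in fact not used for this direction, though it is of course available.
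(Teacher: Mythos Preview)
Your proposal is correct and follows essentially the same route as the paper: both arguments apply \AX{(l1)} to the triple with $D=R(u,v)$ in the middle position and use hypothesis~(i) and the point $y$ to eliminate all three alternatives. Your write-up is in fact more careful than the paper's, which only explicitly rules out the inclusion $R(u,v)\subseteq R(x_1,x_2)\cup R(x_{n-1},x_n)$ and leaves the reader to check that neither of $R(x_1,x_2)\subseteq R(x_{n-1},x_n)$ or $R(x_{n-1},x_n)\subseteq R(x_1,x_2)$ can hold; your general ``key step'' (for arbitrary $i<k$) and the direct rather than contrapositive phrasing are cosmetic differences only.
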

\begin{proof}
  Suppose there exist $u,v,y, x_1,\dots,x_n\in V$ for $n\ge 3$ satisfying
  the pre-conditions (i), (ii), and (iii) of axiom \AX{(p4)} such that
  $R(x_1,x_2)\nsubseteq R(u,v)$ and $R(x_{n-1},x_n)\nsubseteq R(u,v)$,
  i.e.,  \AX{(p4)} is violated. Then,
  $R(x_1,x_2)\overlaps R(u,v)\overlaps R(x_{n-1},x_n)$, but
  $R(u,v)\nsubseteq R(x_1,x_2)\cup R(x_{n-1},x_n)$, i.e., condition
  \AX{(l1)} is violated as well.
\end{proof}
\begin{lemma}
  \label{lem:py->p4}
  Every pyramidal transit function satisfies \AX{(p4)}.
\end{lemma}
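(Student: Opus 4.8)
The plan is to argue entirely inside the interval representation guaranteed by pyramidality. Fix a linear order $<$ on $V$ for which every transit set is an interval; recall that then $[a,b]\subseteq R(a,b)$ for all $a,b\in V$. First I would extract from hypothesis (i) that $x_1,\dots,x_n$ occur in monotone order along $<$: for $2\le i\le n-1$, hypothesis (i) gives $x_{i+1}\notin R(x_{i-1},x_i)\supseteq[x_{i-1},x_i]$ and $x_{i-1}\notin R(x_i,x_{i+1})\supseteq[x_i,x_{i+1}]$, so neither $x_{i-1}$ nor $x_{i+1}$ lies in the $<$-interval spanned by $x_i$ and the other one; the only arrangement of three distinct points with this property has $x_i$ strictly between $x_{i-1}$ and $x_{i+1}$. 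Hence the sequence is strictly monotone, and since reversing $<$ interchanges $R(x_1,x_2)$ with $R(x_{n-1},x_n)$ and leaves the statement of \AX{(p4)} invariant, we may assume $x_1<x_2<\dots<x_n$. Writing $R(x_i,x_{i+1})=[a_i,b_i]$, hypothesis (i) then yields $x_{i-1}<a_i\le x_i$ and $x_{i+1}\le b_i<x_{i+2}$ (with the outermost inequalities vacuous at $i=1$ and $i=n-1$), and since consecutive intervals share $x_{i+1}$ their union $\bigcup_i R(x_i,x_{i+1})$ is the single interval $[a_1,b_{n-1}]$.

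Next, hypothesis (iii) furnishes $y\in R(u,v)\setminus\bigcup_i R(x_i,x_{i+1})$, so $y<a_1$ or $y>b_{n-1}$; by the reversal symmetry above we may assume $y<a_1$. Put $R(u,v)=[p,q]$. Then $p\le y<a_1$, so $R(u,v)$ reaches strictly to the left of the whole chain, while hypothesis (ii) forces $R(u,v)$ to meet $R(x_{n-1},x_n)=[a_{n-1},b_{n-1}]$; since $p<a_1\le a_{n-1}$ this means $q\ge a_{n-1}$, and therefore $R(u,v)=[p,q]\supseteq[a_1,a_{n-1}]$. When $n$ is not too small this interval already engulfs $R(x_1,x_2)$ (indeed $b_1<x_3\le x_{n-2}<a_{n-1}$ for $n\ge 5$), giving $R(x_1,x_2)\subseteq R(u,v)$ outright; and in general the target is to show that at least one endpoint of $R(u,v)$ cannot stop in the interior of $R(x_1,x_2)$ without instead forcing $R(x_{n-1},x_n)\subseteq R(u,v)$. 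Here monotonicity is the key lever: if $x_1$ and $x_2$ both lie in $R(u,v)$ then \AX{(m)} immediately yields $R(x_1,x_2)\subseteq R(u,v)$, so the whole question reduces to placing the endpoints $p$ and $q$ relative to the consecutive intervals $R(x_i,x_{i+1})$.

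The main obstacle is exactly this last step for small $n$: one must rule out that the right endpoint $q$ lands strictly inside $R(x_1,x_2)$ — so $R(x_1,x_2)\not\subseteq R(u,v)$ — while simultaneously $q<b_{n-1}$, so that $R(x_{n-1},x_n)\not\subseteq R(u,v)$ either. I would attack this by the same kind of analysis as in Lemma~\ref{lem:py->p3}: locating $q$ in $R(x_1,x_2)\cap R(x_2,x_3)$ and $p$ strictly to the left of $R(x_1,x_2)$, then using that $R(u,v)$, $R(x_1,x_2)$ and $R(x_{n-1},x_n)$ are intervals together with monotonicity to pin down the admissible positions of $[p,q]$ and conclude. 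I expect this endpoint-threading bookkeeping — the analogue of, but more elaborate than, the short interval argument in Lemma~\ref{lem:py->p3} — to be the bulk of the work; the structural preliminaries in the first two paragraphs are routine once the linear order is fixed.
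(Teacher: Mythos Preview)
Your setup is more explicit than the paper's but heads the same way: the paper argues by contradiction, sets $I_1=\bigcup_i R(x_i,x_{i+1})$ and $I_2=R(x_1,x_2)\cup R(u,v)\cup R(x_{n-1},x_n)$, notes both are intervals with $y\in I_2\setminus I_1$, and then asserts that $R(x_1,x_2)\nsubseteq R(u,v)$ and $R(x_{n-1},x_n)\nsubseteq R(u,v)$ force the two endpoints of $R(u,v)$ to lie one in $R(x_1,x_2)$ and one in $R(x_{n-1},x_n)$, whence $R(u,v)\subseteq I_1$ and $I_2\subseteq I_1$. Your ordering of the $x_i$ and the remark that $x_1,x_2\in R(u,v)$ gives $R(x_1,x_2)\subseteq R(u,v)$ by \AX{(m)} does close the argument cleanly for $n\ge4$, since then $q\ge a_{n-1}>x_{n-2}\ge x_2$.

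The obstacle you isolate for $n=3$, however, is not bookkeeping; it is a real gap that no amount of endpoint-threading will close, and the paper's proof slides over exactly the same point. Take $V=\{1,2,3,4,5\}$ with the natural order and $R(a,b)=[a,b]$ for all pairs except $R(4,5)=\{3,4,5\}$; one checks that $R$ is monotone and pyramidal. With $x_1=2$, $x_2=4$, $x_3=5$, $(u,v)=(1,3)$, $y=1$, hypotheses (i)--(iii) of \AX{(p4)} hold (in particular $R(1,3)\cap R(4,5)=\{3\}\ne\emptyset$), yet $R(x_1,x_2)=\{2,3,4\}\nsubseteq\{1,2,3\}=R(u,v)$ and $R(x_2,x_3)=\{3,4,5\}\nsubseteq R(u,v)$. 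Here only the right endpoint $q=3$ of $R(u,v)$ lies in either small set, while the left endpoint $p=1$ lies in neither; so the paper's claim that ``$R(u,v)$ is delimited by some $z_1\in R(x_1,x_2)$ and $z_n\in R(x_{n-1},x_n)$'' fails, and so does your hope of ruling out that $q$ stops inside $R(x_1,x_2)$. The root cause is that \AX{(p4)} as written does not require $R(x_1,x_2)$ and $R(x_{n-1},x_n)$ to protrude from $R(u,v)$ on \emph{opposite} sides, which is what Tucker's fourth configuration actually demands.
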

\begin{proof}
  Let $R$ be a pyramidal transit function violating \AX{(p4)}. Then there
  exist points $u,v,y\in V$ and $x_1,\dots,x_n\in V$ for $n\ge 3$ such that
  $R(u,v)\cap R(x_1,x_2)\neq \emptyset$,
  $R(u,v)\cap R(x_{n-1},x_n)\neq \emptyset$, $x_i\notin R(x_{j},x_{j+1})$
  for $j\notin \{i-1,i\}$, and $y\in R(u,v)\setminus R(x_i,x_{i+1})$ for
  all $i$, but $R(x_1,x_2)\nsubseteq R(u,v)$ and
  $R(x_{n-1},x_n)\nsubseteq R(u,v)$.  Since $\mathscr{C}_R$ is pyramidal,
  there is an order $<$ on $V$ with respect to which all transit sets are
  intervals. Thus, both
  $I_1\coloneqq R(x_1,x_2)\cup \dots \cup R(x_{n-1},x_n)$ and
  $I_2\coloneqq R(x_1,x_2)\cup R(u,v)\cup R(x_{n-1},x_n)$ are unions of
  non-disjoint intervals with respect to $<$ and thus again
  intervals. Since $y\in R(u,v)\setminus R(x_i,x_{i+1})$ for all $i$, we
  have $y\in I_2\setminus I_1$. Since, $R(x_1,x_2)\nsubseteq R(u,v)$ and
  $R(x_{n-1},x_n)\nsubseteq R(u,v)$ by assumption, $R(u,v)$ is an interval
  delimited by some $z_1\in R(x_1,x_2)$ and $z_n\in
  R(x_{n-1},x_n)$. Therefore, $I_2\subseteq I_1$, a contradiction to
  $y\in I_2\setminus I_1$.
\end{proof}
       
The transit function in Example~\ref{ex:p3-p4} satisfies \AX{(m)},
\AX{(tb)}, \AX{(p2)}, and \AX{(p3)} but violates \AX{(p4)}. The transit
function in Example~\ref{ex:p2-p3} satisfies \AX{(m)}, \AX{(tb)},
\AX{(p2)}, and \AX{(p4)} but violates \AX{(p3)}. The transit function in
Example~\ref{w+wp+u3+hc-py} satisfies \AX{(m)}, \AX{(tb)}, \AX{(p4)}, and
\AX{(p3)} but violates \AX{(p2)}. The canonical transit function of the set
system in Fig.~\ref{fig:cex1}A satisfies \AX{(m)}, \AX{(p2)}, \AX{(p4)},
and \AX{(p3)} but violates \AX{(tb)}. In summary, therefore, the axioms
\AX{(p2)}, \AX{(p3)}, \AX{(p4)}, and \AX{(tb)} are mutually independent for
monotone transit functions.

\begin{lemma}
  \label{lem:p4->wp}
  If $R$ satisfies \AX{(p4)}, then $R$ satisfies \AX{(wp)}.
\end{lemma}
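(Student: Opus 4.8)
The plan is to argue by contraposition: assuming $R$ violates \AX{(wp)}, I would construct points witnessing a violation of \AX{(p4)} with the shortest possible chain, $n=3$. Concretely, a failure of \AX{(wp)} gives transit sets $A,B,C$ that pairwise intersect but with \emph{none} contained in the union of the other two; in particular none is contained in another, so the three sets pairwise overlap. Since all three of the containments $A\subseteq B\cup C$, $B\subseteq A\cup C$, $C\subseteq A\cup B$ fail, we may fix ``escape points'' $a\in A\setminus(B\cup C)$, $b\in B\setminus(A\cup C)$, and $c\in C\setminus(A\cup B)$. Throughout we use, as in the rest of the section, that $R$ is monotone. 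The target is to feed \AX{(p4)} a configuration with big set $R(u,v):=C$ (choosing $u,v$ with $R(u,v)=C$) and a three-term chain whose two edges are honest transit sets sitting inside $A$ and $B$ respectively, arranged so that both of them meet $C$, both poke out of $C$, and some point of $C$ lies in neither; \AX{(p4)} would then force one of these edges into $C$, a contradiction.

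I would split on whether $A\cap B\cap C=\emptyset$. If $A\cap B\cap C\neq\emptyset$, pick $s$ in it and take $x_1:=a$, $x_2:=s$, $x_3:=b$; monotonicity gives $R(a,s)\subseteq A$ and $R(s,b)\subseteq B$. Then condition (i) holds because $a\notin B\supseteq R(s,b)$ and $b\notin A\supseteq R(a,s)$; condition (ii) holds because $s\in C$ lies in both $R(a,s)$ and $R(s,b)$ by \AX{(t1)}; condition (iii) holds with witness point $c$, since $c\in C$ but $c\notin A\supseteq R(a,s)$ and $c\notin B\supseteq R(s,b)$. The conclusion of \AX{(p4)} would demand $R(a,s)\subseteq C$ or $R(s,b)\subseteq C$, which is impossible as $a\in R(a,s)\setminus C$ and $b\in R(s,b)\setminus C$. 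If instead $A\cap B\cap C=\emptyset$, pick $x\in A\cap B$, $m\in A\cap C$, $z\in B\cap C$; these automatically avoid the third set, so $x\notin C$, $m\notin B$, $z\notin A$. Take $x_1:=m$, $x_2:=x$, $x_3:=z$, so $R(m,x)\subseteq A$ and $R(x,z)\subseteq B$ by monotonicity. Condition (i) holds ($m\notin B\supseteq R(x,z)$, $z\notin A\supseteq R(m,x)$), condition (ii) holds ($m\in C\cap R(m,x)$ and $z\in C\cap R(x,z)$), and condition (iii) holds again with witness $c$. This time the conclusion of \AX{(p4)} would demand $R(m,x)\subseteq C$ or $R(x,z)\subseteq C$, impossible since $x\in R(m,x)\cap R(x,z)$ by \AX{(t1)} while $x\notin C$.

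The main obstacle, and the reason for the case split, is translating the set-theoretic failure of \AX{(wp)} into the point-based hypothesis of \AX{(p4)}: the chain edges must literally be transit sets $R(x_i,x_{i+1})$ through a prescribed contact point $x_2$, and one cannot in general realize $A$ (or $B$) itself as $R(x_1,x_2)$ for a freely chosen $x_2$. The fix is to work with the monotone sub-sets $R(x_1,x_2)\subseteq A$ and $R(x_2,x_3)\subseteq B$, which still retain enough points to satisfy (i)--(iii); but guaranteeing that these sub-sets \emph{meet} $C$ is exactly the delicate point, and it splits according to whether the natural contact point $x_2\in A\cap B$ lies in $C$ (first case) or not (second case, where one endpoint is instead routed through $A\cap C$ to force an intersection with $C$). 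Once the right points are chosen, the remaining verification is routine index bookkeeping in \AX{(p4)} — for $n=3$ the only surviving instances of (i) are $x_1\notin R(x_2,x_3)$ and $x_3\notin R(x_1,x_2)$ — together with repeated appeals to \AX{(m)} and \AX{(t1)}.
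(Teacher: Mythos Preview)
Your proof is correct and follows essentially the same approach as the paper: argue by contraposition, split on whether $A\cap B\cap C$ is empty, and in each case exhibit an $n=3$ instance of the hypotheses of \AX{(p4)} whose conclusion fails. The only cosmetic differences are that the paper orders the two cases oppositely and, in the non-empty triple intersection case, takes the ``big set'' to be $R(x_2,x_4)\subseteq C$ rather than $C$ itself; your choice of $C$ works just as well.
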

\begin{proof}
  Let $A,B,C\in \mathscr{C}_R$ with pairwise non-empty
  intersections. Suppose that $A,B,C$ violates \AX{(wp)}.  \emph{Case 1 :
  $A\cap B\cap C=\emptyset$.} Let $x_1\in A\cap B$, $x_2\in B \cap C$ and
  $x_3\in C\cap A$. Since $A\cap B\cap C=\emptyset$, we have, $x_1\notin
  R(x_2,x_3)$ and $x_3\notin R(x_1,x_2)$. Since $A,B,C$ violates \AX{(wp)},
  there exists $y\in A$ such that $y\notin B\cup C$. Also $A\cap
  R(x_2,x_3)=\emptyset$ and $A\cap R(x_1,x_2)=\emptyset$. Therefore,
  $x_1,x_2,x_3$ and $A$ violates \AX{(p4)}.  \emph{Case 2 : $A\cap B\cap
  C\neq\emptyset$.} Let $x_2\in A \cap B\cap C$. Also, there exists $x_1\in
  A\setminus (B\cup C)$, $x_3\in B\setminus (A\cup C)$ and $x_4\in
  C\setminus (A\cup B)$. Then the points $x_1,x_2,x_3$ and the set
  $R(x_2,x_4)$ violates \AX{(p4)}. Therefore, $R$ satisfying \AX{(p4)} also
  satisfies \AX{(wp)}.
\end{proof}

If $\mathscr{C}_R$ contains a weak $\beta$-cycle $\{C_1,\dots, C_n\}$ such
that $\mathcal{C}_i\setminus (\mathcal{C}_{i+1} \cup \mathcal{C}_{i-1})
\neq\emptyset$ for some $1\leq i\leq n$, then, $R$ violates \AX{(p4)}. That
is, if $R$ is monotone and satisfies \AX{(p4)}, then every weak
$\beta$-cycle $\{C_1,\dots,C_n\}\subseteq \mathscr{C}_R$ with $n\geq 3$,
satisfies $\mathcal{C}_i\setminus
(\mathcal{C}_{i+1}\cup\mathcal{C}_{i-1})=\emptyset$ for all $1\le i\le n$,
where indices are taken modulo $n$).

We note that \AX{(p4)} does not imply \AX{(w)}. For example, Let
$\mathscr{C}=\{\{a,b\}, \{b,c\}, \{c,a\}, V\}$ be a set system on
$V=\{a,b,c,d\}$. Here, the canonical transit function satisfies \AX{(p4)}
but violates \AX{(w)}. Together, axioms \AX{(tb)} and \AX{(p4)} provide a
strengthening of weak pyramidality:
\begin{corollary}
  \label{cor:tbp4->wpy}
  Let $R$ be a monotone transit function satisfying \AX{(tb)} and
  \AX{(p4)}, then $\mathscr{C}_R$ is weakly pyramidal.
\end{corollary}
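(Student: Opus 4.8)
The plan is to observe that this corollary is an immediate consequence of two results already established in this section, so the proof should be very short. Recall that a monotone transit function is called \emph{weakly pyramidal} precisely when it satisfies both \AX{(w)} and \AX{(wp)} (this is the usage fixed in Lemma~\ref{lem:n3o->wpy}). Hence it suffices to produce \AX{(w)} and \AX{(wp)} separately from the two hypotheses.

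First I would invoke Lemma~\ref{tb->w}: a monotone transit function satisfying \AX{(tb)} satisfies \AX{(w)}. Then I would invoke Lemma~\ref{lem:p4->wp}: a transit function satisfying \AX{(p4)} satisfies \AX{(wp)}. Combining the two, $R$ satisfies \AX{(w)} and \AX{(wp)}, so by definition $\mathscr{C}_R$ is weakly pyramidal. There is essentially no obstacle here — all the real work is done inside Lemmas~\ref{tb->w} and \ref{lem:p4->wp}, and the corollary merely records their conjunction. The only thing to double-check is that the notion ``weakly pyramidal'' is indeed ``$\AX{(w)}\wedge\AX{(wp)}$'' as used consistently elsewhere (cf.\ Lemma~\ref{lem:n3o->wpy} and Corollary~\ref{cor:l1->n3o}), which it is, so no additional argument is required.

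\begin{proof}
  By Lemma~\ref{tb->w}, $R$ satisfies \AX{(w)} because it satisfies
  \AX{(tb)}. By Lemma~\ref{lem:p4->wp}, $R$ satisfies \AX{(wp)} because it
  satisfies \AX{(p4)}. Since a monotone transit function is weakly
  pyramidal if and only if it satisfies both \AX{(w)} and \AX{(wp)}, the
  set system $\mathscr{C}_R$ is weakly pyramidal.
\end{proof}
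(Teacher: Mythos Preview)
Your proposal is correct and follows exactly the same approach as the paper: invoke Lemma~\ref{tb->w} to get \AX{(w)} from \AX{(tb)}, invoke Lemma~\ref{lem:p4->wp} to get \AX{(wp)} from \AX{(p4)}, and conclude that $\mathscr{C}_R$ is weakly pyramidal.
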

\begin{proof}
  For a monotone transit function, \AX{(tb)} implies \AX{(w)} from
  Lemma~\ref{tb->w}, and \AX{(p4)} implies \AX{(wp)} from
  Lemma~\ref{lem:p4->wp}. Therefore, $\mathscr{C}_R$ is weakly pyramidal.
\end{proof}
Example~\ref{ex:wpy-tbp234} shows that the converse of
Cor.~\ref{cor:tbp4->wpy} is not true.
\begin{example}
  \label{ex:wpy-tbp234}
  Let $R$ on $V=\{a,b,c,d,e,f\}$ be defined by $R(a,b)=\{a,b\}$,
  $R(b,c)=\{b,c\}$, $R(c,d)=\{c,d\}$, $R(a,d)=\{a,d\}$,
  $R(b,e)=R(c,e)=\{b,c,e\}$, $R(e,f)=\{e,f\}$ and all other sets are
  singletons or $V$. $R$ is weakly pyramidal but violates \AX{(tb)},
  \AX{(p2)}, \AX{(p3)}, and \AX{(p4)}.
\end{example}

Since \AX{(tb)} implies \AX{(w)} and \AX{(p4)} implies \AX{(wp)}, we see
that \AX{(w)} and \AX{(p4)} imply \AX{(wpy)}. However,
Examples~\ref{ex:wpy-tbp234}, \ref{ex:p2-p3}, \ref{ex:p2p3-wpy}, and
\ref{w+wp+u3+hc-py} show that \AX{(wpy)} is independent of each of the
conditions \AX{(p2)}, \AX{(p3)}, \AX{(p4)}, and \AX{(tb)}. The example
  in Fig\ref{fig:cex1}A shows that \AX{(w)} and \AX{(p4)} together do not
  imply \AX{(tb)}.
  
We are now in the position to give a characterization of pyramidal
  transit functions as a translation of Tucker's characterization of
  interval hypergraph \cite{Tucker:72,Trotter:76,Duchet:84} to the realm of
  transit functions.
\begin{theorem}
  \label{thm:final}
  A transit function $R$ satisfies \AX{(m)}, \AX{(tb)}, \AX{(p2)},
  \AX{(p3)}, and \AX{(p4)} if and only if $R$ is pyramidal.
\end{theorem}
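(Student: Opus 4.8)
The plan is to treat the two implications separately. The forward direction (``$R$ pyramidal $\Rightarrow$ the five axioms'') is essentially a roll-up of results already proved: a pyramidal transit function is monotone by definition, so \AX{(m)} holds; Theorem~\ref{thm:py->hc} together with Theorem~\ref{thm:tb-equiv} gives \AX{(tb)}; Corollary~\ref{cor:py->p2} gives \AX{(p2)}; Corollary~\ref{cor:py->p3} gives \AX{(p3)}; and Lemma~\ref{lem:py->p4} gives \AX{(p4)}. So I would dispose of this direction in one sentence. For the converse, the strategy is to show that the system of transit sets $\mathscr{C}_R$ is a \emph{closed} clustering system that contains \emph{none} of Tucker's five forbidden configurations of~(\ref{eq:inthypg}), whence it is pre-pyramidal and closed, i.e.\ pyramidal, which is exactly the statement that $R$ is a pyramidal transit function.

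The converse proceeds in the following steps, in order. (1) Axiom \AX{(m)} makes $R$ monotone, so the correspondence between $R$ and the $\mathscr{T}$-system $\mathscr{C}_R$ applies. (2) By Lemma~\ref{tb->w}, \AX{(tb)} implies \AX{(w)}; hence $\mathscr{C}_R$ is a weak hierarchy, it satisfies \AX{(a')} (so $V\in\mathscr{C}_R$ and $\mathscr{C}_R$ is a clustering system), and the associated clustering system is closed. (3) By Lemma~\ref{lem:hc-totbal}, \AX{(tb)} is equivalent to $(V,\mathscr{C}_R)$ being totally balanced, i.e.\ free of weak $\beta$-cycles; this excludes the first forbidden configuration. (4) Axioms \AX{(p2)}, \AX{(p3)}, and \AX{(p4)} were designed precisely to forbid, from the transit sets of a monotone transit function, the second, third, and fourth configurations of~(\ref{eq:inthypg}) respectively (see Sections~\ref{sect:23} and~\ref{sect:45}), the fourth for every number of ``small'' sets. (5) Since $\mathscr{C}_R$ is closed, the intersection of the two large sets of a hypothetical fifth configuration is itself a cluster, so that configuration would contain the fourth one as an induced sub-hypergraph; hence the fifth configuration is excluded as well. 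This is exactly the reduction already used in the proof of Theorem~\ref{thm:L1L2->PY}. Applying Tucker's characterization \cite{Tucker:72,Trotter:76,Duchet:84} now shows that $\mathscr{C}_R$ is pre-pyramidal, and being closed it is pyramidal.

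The one place where more than citation is needed is the claim in step~(4): one must check that \AX{(p2)}, \AX{(p3)}, and \AX{(p4)} rule out \emph{every} occurrence of their forbidden configurations as induced sub-hypergraphs, including the versions in which sets drawn as disjoint in~(\ref{eq:inthypg}) actually meet in further vertices, and---for the fourth, and for the fifth via step~(5)---with an arbitrary number $k\ge 0$ of small overlapping sets; for \AX{(p4)} this amounts to verifying the translation uniformly in $n\ge 3$. I expect this to be the only delicate point, and most of it is already carried out in the paragraphs introducing \AX{(p2)}--\AX{(p4)}; the rest is just assembling Lemma~\ref{tb->w}, Lemma~\ref{lem:hc-totbal}, Lemmas~\ref{lem:py->p2}--\ref{lem:py->p4}, and the closedness supplied by \AX{(w)}. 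It is also worth recording that all five axioms are indispensable: the canonical transit function of Fig.~\ref{fig:cex1}A satisfies \AX{(m)}, \AX{(p2)}, \AX{(p3)}, \AX{(p4)} but violates \AX{(tb)}, while Examples~\ref{w+wp+u3+hc-py}, \ref{ex:p2-p3}, and \ref{ex:p3-p4} show that \AX{(p2)}, \AX{(p3)}, and \AX{(p4)}, respectively, cannot be dropped, and \AX{(m)} is forced because pyramidal transit functions are monotone.
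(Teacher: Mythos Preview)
Your proposal is correct and follows essentially the same route as the paper's own proof: one direction is collected from the earlier lemmas and corollaries, and for the converse you show that \AX{(tb)}, \AX{(p2)}, \AX{(p3)}, \AX{(p4)} exclude Tucker's first four forbidden configurations, with the fifth reduced to the fourth via closedness (the paper phrases this as ``\AX{(tb)} implies \AX{(k)}'', which is the same thing). Your additional remarks on indispensability of the axioms match the discussion the paper places immediately after the theorem.
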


\begin{proof}
  Let $R$ be a monotone transit function with the set of transit sets
  $\mathscr{C}_R$.  If $R$ is pyramidal, then it is, in particular, totally
  balanced and thus satisfies \AX{(tb)} by Thm.~\ref{thm:tb-equiv}.
  Furthermore, $R$ satisfies axioms \AX{(p2)}, \AX{(p3)}, and \AX{(p4)} by
  Corollaries \ref{cor:py->p2}, \ref{cor:py->p3}, and Lemma
  \ref{lem:py->p4}, respectively. For the converse, we assume that
  \AX{(tb)}, \AX{(p2)}, \AX{(p3)}, and \AX{(p4)} hold. We argue that this
  implies that $\mathscr{C}_R$ cannot contain any of the five forbidden
  configurations of Tucker's characterization of interval hypergraphs.
  First, we observe that \AX{(tb)} implies that $\mathscr{C}_R$ is totally
  balanced and thus does not contain the first forbidden configuration,
  i.e., weak $\beta$-cycles. Axioms \AX{(p2)} and \AX{(p3)} imply that
  $\mathscr{C}_R$ satisfies \AX{(P2)} and \AX{(P3)}, which by construction
  exclude the second and third forbidden configuration,
  respectively. Finally, if $R$ satisfies \AX{(p4)}, then $\mathscr{C}_R$
  does not contain the fourth forbidden configuration. We have already
  noted above that, since \AX{(tb)} implies \AX{(k)}, every fifth forbidden
  configuration contains a fourth forbidden configuration. Thus \AX{(tb)}
  and \AX{(p4)} imply that a fifth forbidden configuration cannot appear in
  $\mathscr{C}_R$. Taken together, $\mathscr{C}_R$ is pyramidal.
\end{proof}

The examples discussed above, furthermore, show that \AX{(p2)}, \AX{(p3)},
\AX{(p4)}, and \AX{(tb)} remain independent of each other for monotone
transit functions and thus no subset of these four conditions is
sufficient.
  
\section{Discussion}
\label{discsn}

In this contribution, we have obtained a characterization of the pyramidal
transit function based on Tucker's characterization \cite{Tucker:72} of
interval hypergraphs in terms of forbidden
configurations. Theorem~\ref{thm:final} used two first-order axioms
\AX{(p2)} and \AX{(p3)} as well as the monadic second-order axioms
\AX{(tb)} and \AX{(p4)}. All axioms presented in this paper except
\AX{(p4)}, \AX{(tb2)} and the three equivalent conditions \AX{(hc)}, \AX{(tb)}, and
\AX{(tb')}, are first order axioms. We have proved, therefore, that the transit
functions of union closed set systems and weakly pyramidal set systems are
first order axiomatizable. Also, paired hierarchies are proved to be first
order axiomatizable \cite{Bertrand:17}. In the light of
  Theorem~\ref{thm:final}, we strongly suspect that pyramidal clustering
  systems will not be first-order axiomatizable. Thus we may note that
  \AX{(py)} lies between two pairs of first order axiomatizable clustering
  systems, namely \AX{(PH)} and \AX{(WPY)}, and \AX{(UC)} and \AX{(WPY)},
  respectively.

Moreover, we resolved an open question from earlier work \cite{Changat:21w}
by showing that a transit function is union-closed if and only if it
satisfies \AX{(u)} and \AX{(w)}. We introduced a pair of conditions
\AX{(l1)} and \AX{(l2)} that is sufficient for pyramidal transit functions
and elaborated on necessary conditions, particularly the weakly pyramidal
property and total-balancedness. The implications among all the axioms
discussed in this contribution are summarized in Fig.~\ref{fig:summary}.

\begin{figure}
  \begin{center}
    \includegraphics[width=0.9\textwidth]{./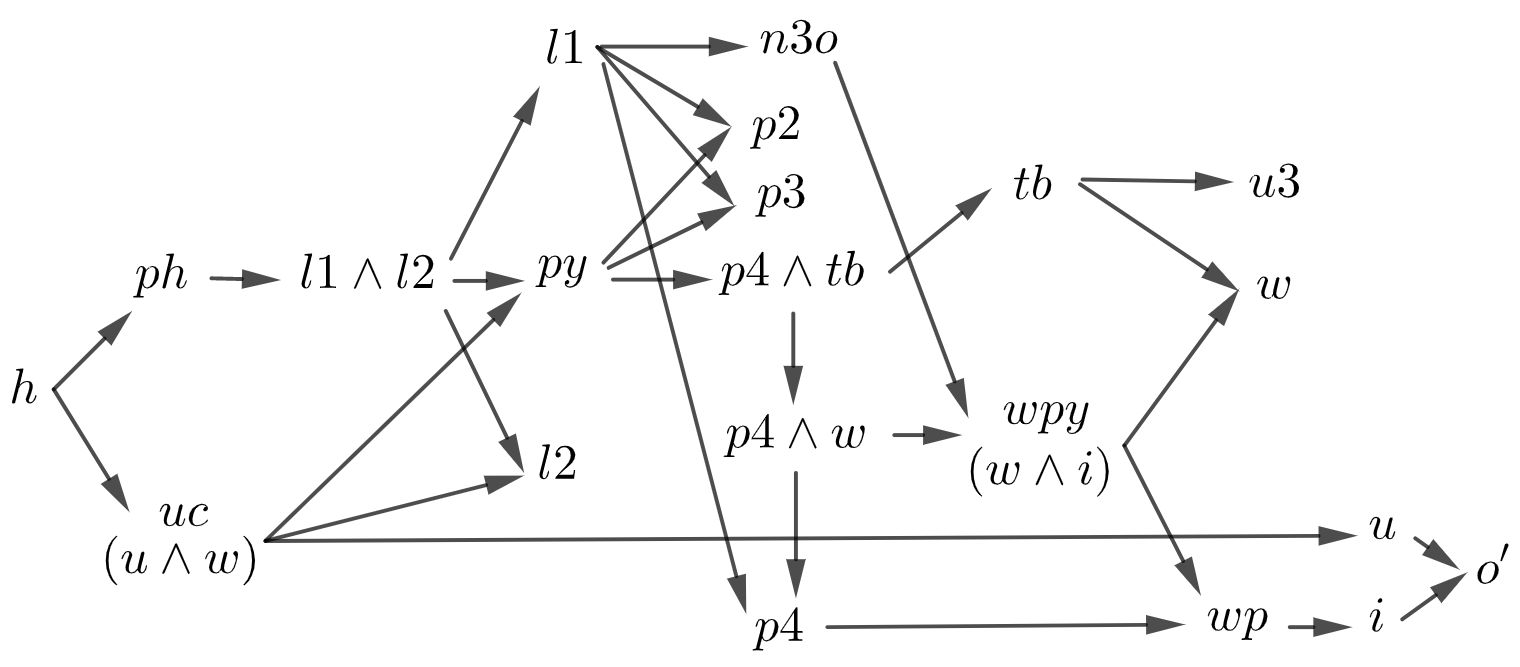}
  \end{center}
  \caption{Summary of implications among properties of monotone transit
    functions.}
  \label{fig:summary}
\end{figure}

Some relationships remain unclear. In particular, it remains open whether
\AX{(l2)} and \AX{(w)} together are sufficient to imply \AX{(tb)} or
\AX{(u3)}. In some cases, furthermore, conditions that seem natural for
transit functions do not have obvious ``translations'' to the more general
setting of set systems. For example, we do not have an equivalent for
\AX{(l2)} or \AX{(p4)} in the language of set systems. One might argue
  that \AX{(p4)} looks rather contrived beyond being designed to rule out
  the fourth forbidden configuration. It would certainly be interesting to
  know whether it could be replaced by simpler condition with a more direct
  translation to set systems.

A hypergraph is \emph{arboreal} if there is a tree $T$ such that every
hyperedge induces a connected subgraph, i.e., a subtree, of $T$
\cite[ch.5.4]{Berge:89}. An interval hypergraph thus is an arboreal
hypergraph for which $T$ is a path. Arboreal hypergraphs have been studied
from the point of view of cluster analysis and their corresponding
dissimilarities in \cite{Brucker:05}, suggesting that the transit functions
associated with binary arboreal clustering system also will be of interest.

\subsection*{Acknowledgements}
This research work was performed in part while MC was visiting the Max
Plank Institute for Mathematics in the Sciences (MPI-MIS) in Leipzig and
Leipzig University's Interdisciplinary Center for Bioinformatics (IZBI). MC
acknowledges the financial support of the MPI-MIS, the hospitality of the
IZBI, and the Commission for Developing Countries of the International
Mathematical Union (CDC-IMU) for providing the individual travel fellowship
supporting the research visit to Leipzig. This work was supported in part
by SERB-DST, Ministry of Science and Technology, Govt.\ of India, under the
MATRICS scheme for the research grant titled ``Axiomatics of Betweenness in
Discrete Structures'' (File: MTR/2017/000238). The work of AVS is supported
by CSIR, Government of India (File: 09/0102(12336)/2021-EMR-I).


\end{document}